\documentclass[11pt]{article}

\usepackage[a4paper,margin=1.02in]{geometry}
\usepackage[T1]{fontenc}
\usepackage[utf8]{inputenc}
\usepackage{lmodern}
\usepackage{amsmath,amssymb,amsthm,mathtools}
\usepackage{microtype}
\usepackage{enumitem}
\usepackage{tabularx,booktabs}
\usepackage[hidelinks]{hyperref}
\usepackage{xcolor}

\setlist{nosep}
\numberwithin{equation}{section}

\newtheorem{theorem}{Theorem}
\newtheorem{lemma}{Lemma}
\newtheorem{proposition}{Proposition}
\newtheorem{corollary}{Corollary}

\theoremstyle{definition}
\newtheorem{definition}{Definition}
\theoremstyle{plain}

\newcommand{\Fix}{\operatorname{Fix}}
\newcommand{\Sub}{\operatorname{Sub}}
\newcommand{\G}{\mathcal G}
\newcommand{\M}{\mathcal M}

\newcommand{\supp}{\operatorname{supp}}
\newcommand{\dom}{\operatorname{dom}}
\newcommand{\ran}{\operatorname{ran}}
\newcommand{\id}{\operatorname{id}}
\newcommand{\dist}{\operatorname{dist}}
\newcommand{\vi}{\vskip 0.1in \noindent}

\newcommand{\Ball}{B}
\newcommand{\whG}{\widehat G}
\newcommand{\whX}{\widehat X}
\newcommand{\preceqG}{\preceq_G}
\newcommand{\preceqcl}{\preccurlyeq}
\newcommand{\1}{\mathbf 1}

\begin{document}
\title{\bfseries The Almost Finite--Purely Infinite Dichotomy\\
for Minimal Amenable Ample Groupoids}
\author{G\'abor Elek \and \'Ad\'am Tim\'ar}
\date{}

\hypersetup{pdftitle={The Almost Finite--Purely Infinite Dichotomy},pdfsubject={matuikesz v2: Introduction v3 and Preliminaries v15},pdfauthor={Gabor Elek and Adam Timar}}
\maketitle
\begin{abstract}
We answer, in the principal setting, a question of Matui on the almost finite--purely infinite dichotomy for minimal amenable ample groupoids. We prove that a second-countable Hausdorff minimal principal topologically amenable ample groupoid with Cantor unit space is strongly almost finite if and only if it admits an invariant probability measure; if no such measure exists, then it is purely infinite. We also prove that every bounded-degree uniformly Borel amenable F\o lner graph is Borel almost finite. The proofs combine the recent three-to-two comparison method of Glasner and Liu with the randomized F\o lner packing methods of Elek and Tim\'ar.
\end{abstract}
\vi
\noindent\textbf{Keywords.}
Almost finiteness, ample groupoids, topological amenability, Borel graphs.
\vi
\noindent\textbf{2020 Mathematics Subject Classification.}
Primary 22A22; Secondary 37B05, 03E15, 46L35.
\tableofcontents

\section{Introduction}

\'Etale groupoids have been studied for several decades at the interface of topological dynamics and operator algebras. Their systematic use in $C^*$-algebra theory goes back to Renault's foundational monograph \cite{Renault}, and they have since become standard objects in the study of dynamical systems, orbit equivalence and operator algebras. A modern introduction to Hausdorff \'etale groupoids and their $C^*$-algebras is given by Sims \cite{Sims}.

In Cantor dynamics, ample groupoids form a particularly important class of \'etale groupoids and have been studied extensively since the work of Giordano--Putnam--Skau \cite{GPS} and Matui \cite{MatuiHomology,MatuiTFGSurvey}. They now form a well-established class connecting Cantor dynamics with the structure of groupoid $C^*$-algebras, topological full groups, groupoid homology, and related algebraic invariants.

Within this framework, almost finiteness and pure infiniteness have emerged as two fundamental and sharply contrasting forms of structure. Almost finiteness, introduced by Matui as a topological counterpart of hyperfiniteness, expresses the existence of finite compact-open approximations whose classes have arbitrarily small boundary \cite{MatuiHomology,MatuiTFGSurvey}. Pure infiniteness, also introduced by Matui in this context, encodes the opposite phenomenon: nonempty clopen sets admit paradoxical clopen equidecompositions \cite{MatuiSFT}. Both notions have become central in the study of topological full groups, groupoid homology, type semigroups, and groupoid $C^*$-algebras.

Matui asked this question explicitly in his 2025 lecture notes: does there exist a minimal ample amenable groupoid which is neither almost finite nor purely infinite? \cite[Problem~3.8]{MatuiNotes}. The same question is recorded as Problem~0.1 in Wu--Zhao--Ma \cite{WuZhaoMa}. The purpose of the present paper is to show that the answer is negative in the principal topologically amenable Cantor setting: there the almost finite--purely infinite dichotomy is complete.

Our main result is the following.

\begin{theorem}[Almost finite--purely infinite dichotomy]\label{thm:main-intro}
Let $\G$ be a second-countable Hausdorff minimal principal topologically amenable ample groupoid whose unit space $\G^{(0)}$ is a Cantor space. Then the following are equivalent:
\begin{enumerate}
\item $\G$ admits an invariant probability measure;
\item $\G$ is strongly almost finite.
\end{enumerate}
If these equivalent conditions hold, then $\G$ is almost finite in the sense of Matui. If they fail, then $\G$ is purely infinite.
\end{theorem}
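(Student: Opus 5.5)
The plan is to prove the two implications separately and then derive the two final claims. The direction (2)$\Rightarrow$(1) is the soft one. Strong almost finiteness supplies, for every compact $K\subseteq\G$ and $\varepsilon>0$, a clopen partition of $\G^{(0)}$ into towers (elementary subgroupoids) whose levels are $(K,\varepsilon)$-invariant. On each tower I would put the normalized uniform measure along its fibres, which is $\varepsilon$-invariant under the bisections contained in $K$. Taking a weak$^*$ limit point along an exhausting sequence $K_n\uparrow\G$ with $\varepsilon_n\to0$ gives a probability measure on the compact space $\G^{(0)}$. It is invariant because $\G$ is covered by countably many compact open bisections. Conversely, strongly almost finite should imply almost finite in Matui's sense essentially by definition: the towers give elementary compact open subgroupoids with small boundary. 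This settles the sentence "If these equivalent conditions hold, then $\G$ is almost finite."

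The main work is (1)$\Rightarrow$(2). I would proceed in three steps.

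\begin{enumerate}
\item Use minimality to see that every invariant probability measure has full support and is atomless, since $\G^{(0)}$ is Cantor and orbits are infinite. By compactness of the invariant-measure simplex, every nonempty clopen set $A$ then has $\inf_\mu \mu(A)>0$.
\item Build topological Følner towers. Topological amenability (via approximately invariant continuous means, in the Glasner--Liu style) gives, for each compact $K$, locally finite families of $(K,\varepsilon)$-Følner sets inside orbits, chosen continuously. The randomized Følner packing of Elek--Timár, run in a clopen, continuous version, then produces a family of disjoint clopen towers. Their union $A$ has a complement $B$ that is "small" in the sense that $\mu(B)<\delta$ for every invariant $\mu$.
\item Absorb the remainder $B$ into the towers. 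Here I would invoke the three-to-two comparison method of Glasner--Liu. It converts measure-smallness, namely $\mu(B)<\mu(C)$ for all invariant $\mu$ with $C$ a suitable subset of the tower bases, into a genuine clopen subequivalence $B\precsim C$. Enlarging towers through this injection keeps the Følner ratios within $2\varepsilon$.
\end{enumerate}

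I expect step 3 to be the main obstacle. Comparison is not known a priori for these groupoids, which is the whole point of the question. Instead one has to prove a weak form, three-to-two comparison, directly from the tower structure of step 2 and show that this weak form suffices, iterating with geometrically shrinking error.

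For the last claim, assume there is no invariant probability measure. I would show that every nonempty clopen $A$ is properly infinite, i.e. $A\sqcup A\precsim A$. The absence of invariant measures makes the type semigroup "measure-free": by Tarski's alternative, in its Følner-graph form, each clopen set is paradoxical on large finite Følner pieces of orbits. Using the same packing-plus-three-to-two-comparison machinery, these local paradoxical decompositions assemble into clopen ones. Minimality then moves everything into an arbitrary nonempty clopen set, so $\G$ is purely infinite in Matui's sense.
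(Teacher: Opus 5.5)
Your soft direction and your overall plan for (1)$\Rightarrow$(2) match the paper (for the weak$^*$ limit, use the uniform probability on a single F\o lner tile). That plan is a clopen F\o lner packing that is large for every invariant measure, followed by absorption of the uncovered set through an injection into reserved points.

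\textbf{The comparison step.} The gap is in step~3. Obtaining $B\preceqcl C$ from $\mu(B)<\mu(C)$ for all $\mu\in\M$ is dynamical comparison, and ``prove three-to-two from the tower structure and iterate with geometrically shrinking error'' is not a mechanism for it. In the paper, $3A\preceqcl 2B\Rightarrow A\preceqcl B$ uses neither towers nor invariant measures, only topological amenability.
\begin{enumerate}
\item Collapse each triple $J(a,1),J(a,2),J(a,3)$ to a degree-three vertex. The points of $2B$ outside the image of $J$ become degree-one vertices, and each $b\in B$ becomes an edge. An injection $A\to B$ is then a collision-free choice of one edge at every degree-three vertex.
\item On the closed union of three-regular tree components, each vertex chooses an edge whose half-tree has mass below $c\in(1/3,3/8)$ under the pushed-forward amenability kernel. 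The three sectors at a vertex have total mass at most $1$. The two opposite sectors of an edge have total mass above $3/4$. Hence no edge is chosen from both ends.
\item This choice extends to a clopen partial matching. By compactness, every remaining vertex sees a degree-one vertex or a cycle within a fixed radius. This forces a short augmenting path from any still-unmatched vertex, and finitely many rounds of clopen short-augmenting-path elimination (Glasner--Liu) match everything.
\end{enumerate}
Comparison then follows in the type semigroup. Minimality gives $[X]\le 2^h a$ for every nonzero $a$, so $3^h b\le[X]$ yields $b\le a$ after $h$ applications of three-to-two. By Melleray (see also Ara--B\"onicke--Bosa--Li), this weak comparability is equivalent to almost unperforation and to dynamical comparison.

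Also, the reserve must be a fixed proportion $\lceil 3t|T|\rceil$ of every tile. That gives uncovered measure below $t$ and reserve measure above $3t(1-t)>t$. A subset of the tower bases has measure only about $\int_{[\mathcal T]}|\mathcal T(x)|^{-1}\,d\mu(x)$, and the tile sizes are not controlled independently of the required coverage.

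\textbf{Strong almost finiteness.} A single F\o lner tiling gives only Matui's almost finiteness. Strong almost finiteness requires finitely many clopen tilings $\mathcal E_1,\dots,\mathcal E_m$ with $\tau$-F\o lner tiles and $\frac1m\sum_i\1_{D_K(\mathcal E_i)}(x)<\eta$ at every $x$, and your plan never produces this. The paper starts from an almost-finite tiling $\mathcal T$ and takes $H\supseteq K$ making the $\mathcal T$-tiles cliques. Amenability then gives clopen packings $\mathcal P_1,\dots,\mathcal P_m$ whose distinct tiles are at $H$-distance greater than one, and which cover every point in more than $(1-\theta)m$ of them, where $(d+1)\theta<\eta$. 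Finally, each $\mathcal P_i$-tile is replaced by the union of the $\mathcal T$-tiles meeting it.

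\textbf{The measure-free case.} Your argument here cannot run. Without an invariant measure the orbit F\o lner property fails, since your own weak$^*$-limit argument shows that F\o lner sets produce an invariant measure. So there are no F\o lner pieces or F\o lner packings from which to assemble paradoxical decompositions. What you are missing is that the three-to-two principle, and the comparison deduced from it, are measure-free. When $\M=\varnothing$ the hypothesis of comparison is vacuous, so $A\preceqcl B$ for all nonempty clopen $A,B$. Splitting $A=A_1\sqcup A_2$ into nonempty clopen pieces then gives $2[A]\le[A]$.
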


The notion of strong almost finiteness used here will be defined in Section~2.

Suzuki asked two questions concerning the relation between almost finiteness and topological amenability \cite[Remark~3.7]{Suzuki}. The first asks whether every minimal almost finite \'etale Cantor groupoid is topologically amenable. The answer is negative: the first author constructed a minimal principal almost finite ample groupoid which is not topologically amenable; see \cite{ElekTimarStrong} and the references therein. The second asks whether a minimal topologically amenable \'etale Cantor groupoid admitting an invariant probability measure must be almost finite. Theorem~\ref{thm:main-intro} settles this second question in the principal case, and in fact yields strong almost finiteness.

The recent work of Glasner and Liu plays a central role in both the background and the proof of our result. Almost finiteness for free actions of amenable groups was introduced and developed by Kerr and subsequently related to dynamical comparison by Kerr--Szab\'o \cite{Kerr,KerrSzabo}. Glasner and Liu recently proved dynamical comparison for every action of a countably infinite amenable group on a zero-dimensional compact space \cite{GlasnerLiu}. Together with the Kerr--Szab\'o characterization, their theorem settles the almost-finiteness problem for free amenable Cantor actions: every such action is almost finite. Theorem~\ref{thm:main-intro} extends this from transformation groupoids to arbitrary principal topologically amenable ample Cantor groupoids. Equally importantly for the present paper, the Glasner--Liu proof contains a three-to-two cancellation mechanism which is one of the main ingredients of our argument. We combine it with the randomized F\o lner packing methods developed in \cite{ElekTimarUBA,ElekTimarStrong}.

The topological amenability assumption cannot be omitted. For instance, the free minimal Haar-measure preserving action of $F_2$ on its profinite completion gives a principal ample Cantor groupoid with an invariant probability measure which is neither almost finite nor purely infinite.

\subsection*{The Borel origin of the argument}

The Borel starting point of the present paper is Problem~3.2 in Marks's ICM survey \cite{MarksICM}. Let $\Gamma$ be a finitely generated amenable group with a finite symmetric generating set $S$, and let $\Gamma\curvearrowright X$ be a free Borel action on a standard Borel space. Marks asked whether, for every $\varepsilon>0$, the action admits a Borel tiling by finitely many $(\varepsilon,S)$-F\o lner shapes. In the graph language used here, this asks whether the Schreier graph of every such action is Borel almost finite.

Substantial special cases were known. Conley, Jackson, Marks, Seward and Tucker-Drob developed Borel asymptotic dimension and proved a general Borel F\o lner tiling theorem for free actions with finite asymptotic separation index \cite[Theorem~9.1]{CJMST}. Combined with their structural results on Borel asymptotic dimension, this gives a positive answer to Marks's problem for a large class of amenable groups, including groups of uniform local polynomial volume growth, all polycyclic groups, and the lamplighter group $\mathbb Z_2\wr\mathbb Z$ \cite{CJMST}. Thus the problem was known for groups of polynomial growth and already for several groups of exponential growth.

More recently, Greb\'ik, Marks, Rozho\v{n} and Shinko obtained a genuinely graph-theoretic result: every bounded-degree Borel graph of subexponential volume growth admits Borel F\o lner tilings \cite[Theorem~B]{GMRS}. In particular, the tiling phenomenon extends well beyond Schreier graphs coming from the group classes covered by the earlier asymptotic-dimension methods.

Motivated by this circle of questions, in \cite{ElekTimarUBA} we formulated the general graph-theoretic problem at the level of uniform Borel amenability: is every bounded-degree uniformly Borel amenable F\o lner graph Borel almost finite? This class contains the Schreier graphs in Marks's problem as well as bounded-degree Borel graphs of subexponential growth. Our second main theorem gives an affirmative answer.

\begin{theorem}[Borel almost finiteness]\label{thm:borel-intro}
Every bounded-degree uniformly Borel amenable F\o lner graph is Borel almost finite.
\end{theorem}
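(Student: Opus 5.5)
The plan is to follow the same two-stage scheme that, in the topological setting, goes from comparison to almost finiteness: first prove a Borel comparison statement for uniformly Borel amenable F\o lner graphs, then upgrade it to F\o lner tilings by an exhaustion argument.

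Let $G$ be a bounded-degree Borel graph on a standard Borel space $X$ that is uniformly Borel amenable and F\o lner. Fix $\varepsilon>0$ and a radius $r$. By uniform Borel amenability there are Borel families of finite $(\delta,r)$-F\o lner sets, with $\delta\ll\varepsilon$. The randomized F\o lner packing of \cite{ElekTimarUBA,ElekTimarStrong} should then produce a Borel collection $\mathcal T_1$ of pairwise disjoint F\o lner tiles. The packing is chosen so that, in every large F\o lner window $F$ of the graph, the uncovered set $U_1$ occupies at most a $(1-c)$ proportion of $F$, where $c>0$ depends only on the degree bound and $\delta$.

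Iterating this naively gives $|U_k\cap F|\le (1-c)^k|F|$ only locally, and it does not give a bounded number of tile shapes. The Borel difficulty is that we cannot average against invariant measures, since there may be none and the conclusion must hold everywhere.

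The key step is a Borel version of the Glasner--Liu three-to-two cancellation. The target statement is: if $A,B\subseteq X$ are Borel sets with $|A\cap F|\le \tfrac23|B\cap F|$ for all sufficiently F\o lner windows $F$ at scale $R$, then there is a Borel injection $A\to B$ moving points at most a distance $R'$ in $G$.

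I would prove this in two moves. First, build a Borel bipartite matching in the finite-range graph between $A$ and $B$. Locally, Hall's condition is guaranteed by the density hypothesis on F\o lner windows, with slack $\tfrac13$. Second, use the slack to make the matching Borel. The method is the standard augmenting-path approach with a Borel maximal independent set of augmenting paths of bounded length. The slack forces every unmatched point to have a short augmenting path, so finitely many rounds suffice. This is where I expect the main obstacle: controlling the lengths of augmenting paths uniformly, which is precisely what the three-to-two mechanism is designed to provide. Because the hypothesis is only F\o lner-local rather than measure-theoretic, we need uniformity in Borel amenability.

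With comparison in hand, I would finish in three steps.

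1. Take the uncovered set $U_1$ after the first packing. Since $U_1$ is small in every window, comparison injects $U_1$ into the interiors of the tiles of $\mathcal T_1$, using a sparse subset of each tile chosen uniformly in a Borel way.
2. Enlarge each tile $T$ by the points of $U_1$ mapped into it. The displacement is at most $R'$, which is tiny compared to the tile scale (the tiles are chosen at scale $\gg R'$). Hence the enlarged tile remains $(\varepsilon,r)$-F\o lner: its boundary grows by at most about $|T|\cdot$(density of $U_1$).
3. To keep the resulting tiles connected and finite-shaped, attach each $U_1$-point together with a geodesic path to its image, which has length at most $R'$. Paths are assigned disjointly by a Borel greedy choice.

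The outcome is a Borel partition of $X$ into finite $(\varepsilon,r)$-F\o lner sets of bounded size, which is Borel almost finiteness.
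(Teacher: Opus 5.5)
\textbf{There is a genuine gap, in your comparison step.} You claim the slack ``forces every unmatched point to have a short augmenting path, so finitely many rounds suffice.'' Nothing supports this. Your hypothesis only controls counts on F\o lner windows, but the region explored by bounded-length alternating paths from an unmatched point need not be F\o lner, so no counting contradiction arises. Hall-type slack for arbitrary finite sets does not follow from such counts at an arbitrary range either.

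The paper never removes all unmatched points by augmentation. It uses Packing input~I to turn an \emph{additive} surplus $|B\cap F|-|A\cap F|\ge\gamma|F|$ into a bounded-displacement injection $J:A\times[N]\to B\times[N]$; the charts of $J$ define the bipartite multigraph. Lemma~\ref{lem:borel-augmentation} is then run only to remove augmenting paths of length at most $2q-1$, which leaves unmatched sets $U\subseteq A$ and $V\subseteq B$. The idea you are missing is how to absorb this leftover:
\begin{itemize}
\item The additive surplus makes $V$ $r$-dense for \emph{every} matching, with $r$ independent of the matching, so $X\preceq_G LV$. A multiplicative condition such as $|A\cap F|\le\frac{2}{3}|B\cap F|$ does not give this when $B$ is sparse.
\item The absence of short augmenting paths makes the iterates $T^0,\dots,T^q$ of the alternating map $T=(M^{-1}\times\mathrm{id})\circ J$ disjoint on $U\times[N]$, so $(q+1)NU\preceq_G NA$.
\item Choosing $2^h\ge NL$ and then $(q+1)N\ge 3^h$ gives $3^hU\preceq_G 2^hV$. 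Then $h$ applications of Lemma~\ref{lem:borel-three-two} give $U\preceq_G V$.
\end{itemize}
So three-to-two is not a device for bounding augmenting-path lengths, as you suggest. It is a cancellation law applied to the leftover, and its proof is where uniform Borel amenability is actually used. Collapsing each $J$-triple to a vertex gives a multigraph with vertices of degree three and one. Components containing a degree-one vertex or a cycle are handled combinatorially. On three-regular tree components, however, one must push the amenability measures onto the tree and let each vertex choose the edge of smallest half-tree mass. Your plan has no counterpart to this step.

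\textbf{Your packing step does not deliver what Step~1 needs.} A packing that leaves up to a $(1-c)$-fraction of each window uncovered does not make $U_1$ ``small in every window,'' and, as you note yourself, naive iteration does not fix this. What is required is the Elek--Tim\'ar large packing (Packing input~II): a $t$-F\o lner packing $\mathcal T$ and $\rho>0$ such that every $\rho$-F\o lner $F$ has at least $(1-t)|F|$ of its points in tiles \emph{contained in} $F$. First discard components of size at most $\lceil 1/t\rceil$, so that $|T|>1/t$. Then reserves $B_T$ of size $\lceil 3t|T|\rceil$ yield $|B\cap F|-|A\cap F|\ge(2t-3t^2)|F|$, which is exactly the hypothesis of Theorem~\ref{thm:borel-comparison}.

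\textbf{Steps~2 and~3 should be simplified.} A scale separation between $R'$ and the tiles is neither available nor needed. It is not available because the displacement bound comes from a comparison applied to sets defined by the packing. It is not needed because $\partial T'\subseteq\partial T\cup(T'\setminus T)$ and $|T'\setminus T|\le|B_T|\le 4t|T|$ already give $|\partial T'|<5t|T'|$, and the diameter bound $k+2s$ needs no connectivity. Step~3 should be dropped: the theorem does not ask for connected tiles, and geodesics to the images pass through other tiles, so they cannot be attached disjointly without destroying the partition.
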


Thus, for every $\varepsilon>0$, the whole vertex space admits a Borel partition into finite sets of uniformly bounded diameter and relative boundary less than $\varepsilon$. In particular, Theorem~\ref{thm:borel-intro} settles Marks's Problem~3.2 and contains the Greb\'ik--Marks--Rozho\v{n}--Shinko theorem, as well as the preceding group-action cases, as special cases. We stress that the partitions obtained here are not asserted to be increasing, so no Borel hyperfiniteness conclusion is being claimed.

\subsection*{Applications}

Theorem~\ref{thm:main-intro} makes structural results previously requiring almost finiteness, pure infiniteness or comparison applicable throughout the principal amenable setting. In this subsection, let $\G$ satisfy the hypotheses of that theorem and put $X=\G^{(0)}$.

\paragraph{The invariant-measure case.}
If $\G$ admits an invariant probability measure, it is strongly almost finite and has dynamical comparison: whenever clopen sets $A,B\subseteq X$ satisfy $\mu(A)<\mu(B)$ for every invariant probability measure $\mu$, the set $A$ has a finite clopen partition whose pieces can be mapped by bisections into pairwise disjoint subsets of $B$.

The reduced algebra $C_r^*(\G)$ is stably finite and has stable rank one, real rank zero and strict comparison, by the results of Suzuki and Ara--B\"onicke--Bosa--Li \cite{Suzuki,ABBLStrict}. Ma--Wu's theorem gives $\mathcal Z$-stability \cite[Corollary~13.14]{MaWu}. Together with simplicity, nuclearity and the UCT, these results imply that $C_r^*(\G)$ is quasidiagonal, has nuclear dimension at most one, and is classified by its Elliott invariant, consisting of ordered $K$-theory, the class of the unit, traces and their pairing \cite{Tu,TWW,CETWW,MaWu}.

\paragraph{The case without an invariant measure.}
If $\G$ admits no invariant probability measure, every nonempty clopen subset of $X$ contains two disjoint copies of itself obtained by finite clopen equidecompositions along $\G$. Any clopen set can also be moved piecewise into any nonempty clopen set. The known groupoid pure-infiniteness criteria and Tu's UCT theorem then imply that $C_r^*(\G)$ is a unital UCT Kirchberg algebra \cite{BCS,BonickeLi,Tu}. It is therefore classified by its $K$-groups together with the class of the unit, by the Kirchberg--Phillips theorem \cite{Phillips}.

\paragraph{Free Cantor actions of nonamenable groups.}
In particular, let $\Gamma$ be a countable nonamenable group acting freely, minimally and topologically amenably on a Cantor space $X$. Such an action has no invariant probability measure: integrating its amenability witnesses against such a measure would imply amenability of $\Gamma$ \cite[Lemma~2.2]{GGKN}. Applying Theorem~\ref{thm:main-intro} to $\Gamma\ltimes X$ shows that every nonempty clopen subset of $X$ is paradoxical by piecewise $\Gamma$-translations. The action has dynamical comparison, and $C(X)\rtimes_r\Gamma$ is a UCT Kirchberg algebra.

R\o rdam--Sierakowski constructed, for every countable exact nonamenable group, a free minimal amenable Cantor action with a Kirchberg crossed product \cite[Theorem~6.11]{RordamSierakowski}. The conclusion here holds for \emph{every} such action. Comparison was already known for several classes of nonamenable groups, including nonamenable hyperbolic groups, by Gardella--Geffen--Kranz--Naryshkin \cite[Theorems~B and~C]{GGKN}. Our result establishes the free Cantor-space case of their Conjecture~D, without further restrictions on the acting group.

These conclusions also answer the Cantor-space cases of Problems~XXXII and~XXXIII in Schafhauser--Tikuisis--White's survey \emph{Nuclear $C^*$-algebras: 99 problems} \cite[Problems~XXXII and~XXXIII]{STW99}. Problem~XXXII asks for $\mathcal Z$-stability of crossed products of free minimal actions: part~(1) concerns countably infinite amenable groups acting on finite-dimensional compact metrizable spaces, and part~(2) concerns countable exact nonamenable groups acting topologically amenably on compact metrizable spaces. Problem~XXXIII asks for dynamical comparison for all free minimal topologically amenable actions of countable groups on compact metrizable spaces. For nonamenable groups, our theorem establishes both assertions on Cantor spaces and gives a UCT Kirchberg crossed product, not merely $\mathcal Z$-stability. For amenable groups, the Cantor-space conclusions already follow from Glasner--Liu \cite{GlasnerLiu}, together with Kerr's and Kerr--Szab\'o's results \cite{Kerr,KerrSzabo}.

\paragraph{Topological full groups.}
In both cases, the commutator subgroup of the topological full group $[[\G]]$ is simple, and the index map $I:[[\G]]\to H_1(\G)$ is surjective, by Matui's results \cite{MatuiHomology,MatuiSFT,MatuiTFGSurvey}. Since dynamical comparison holds, Xin Li's theorem \cite[Corollary~6.14]{Li} gives the exact sequence
\[
 H_0(\G)\otimes\mathbb Z/2\mathbb Z
 \longrightarrow [[\G]]_{\mathrm{ab}}
 \xrightarrow{\,I\,} H_1(\G)
 \longrightarrow 0.
\]
Here $H_i(\G)$ denotes groupoid homology with integer coefficients and $[[\G]]_{\mathrm{ab}}$ denotes the abelianization. Thus Matui's AH-conjecture holds for every groupoid in Theorem~\ref{thm:main-intro}.

Although our main theorem is stated for groupoids, the proof is organized almost entirely in graph-theoretic language. Section~2 gives a dictionary between principal ample Cantor groupoids and bounded-degree clopen graphs. In the principal setting, every compact-open bisection is uniquely determined by the partial homeomorphism that it induces on the unit space; after making this identification once, we work with the corresponding clopen orbit maps. Elementary subgroupoids become clopen tilings, and topological amenability becomes the corresponding finite graph condition. After that translation, the Borel and clopen arguments are presented as graph arguments, and no familiarity with groupoid formalism is needed to follow the main combinatorics. This organization is intended in particular for readers coming from Borel graph combinatorics and countable equivalence relations.

\section{Preliminaries: from groupoids to clopen graphs}\label{sec:prelim}

\paragraph{Clopen graphs and stable actions.}
Before introducing ample groupoids, we would like to take a brief detour through clopen graphs. These are the Cantor-space counterparts of Borel graphs, which are fundamental objects in Borel combinatorics. As we shall see, the ample groupoids studied in this paper can be described in the language of clopen graphs, allowing us to approach them from a familiar combinatorial viewpoint.

An action of a countable group $\Gamma$ on a topological space $X$ by homeomorphisms is \emph{stable} if $\Fix(g)=\{x\in X:gx=x\}$ is clopen for every $g\in\Gamma$. Continuity alone does not suffice: the shift on $\{0,1\}^{\mathbb Z}$ fixes exactly the two constant configurations, and this two-point set is not open.

The stability condition has a simple topological meaning. Give the space $\Sub(\Gamma)$ of subgroups the topology inherited from $\{0,1\}^{\Gamma}$ through their indicator functions. A continuous action is stable if and only if the stabilizer map $x\mapsto\Gamma_x=\{g\in\Gamma:gx=x\}$ is continuous.

\begin{definition}\label{def:clopen-graph}
A \emph{clopen graph} on a Cantor space $X$ is a simple graph $K$ admitting a presentation by a stable action $\Gamma\curvearrowright X$ of a countable group and a countable symmetric generating set $S$ of $\Gamma$, with
\[
 x\sim_K y
 \quad\Longleftrightarrow\quad
 x\ne y\ \text{ and }\ y=sx\ \text{ for some }s\in S.
\]
Thus $K$ is the underlying simple Schreier graph of the action, and its connected components are the $\Gamma$-orbits. If $S$ is finite, then $K$ is a \emph{bounded-degree clopen graph}, with degree at most $|S|$.
\end{definition}

Suppose that $S$ is finite. For an integer $r\geq0$, the \emph{labelled radius-$r$ neighbourhood} of $x$ has root $x$, all vertices at $K$-distance at most $r$ from $x$, and every directed edge $y\to sy$, labelled by $s\in S$, with both endpoints in this ball. Loops and distinct labels are retained. Two neighbourhoods have the same \emph{type} when an isomorphism preserves roots, directions and labels.

For each radius $r$, the points with any prescribed neighbourhood type form a clopen set. Indeed, let $u,v$ range over words in $S$ of length at most $r$, including the empty word. All vertices are of the form $ux$; vertex identifications $ux=vx$ and labelled edges $sux=vx$ are determined by membership in $\Fix(v^{-1}u)$ and $\Fix(v^{-1}su)$. These finitely many sets are clopen by stability, so their membership patterns give a finite clopen partition; each type corresponds to a union of its parts. For countable $S$, the same argument applies to each finite symmetric subset.

\paragraph{Minimality.}
The stable action $\Gamma\curvearrowright X$ is \emph{minimal} if every orbit $\Gamma x$ is dense in $X$. Equivalently, every connected component of $K$ is dense in its vertex space.

\paragraph{Topological amenability.}
Write $R_\Gamma=\{(x,y)\in X\times X:y\in\Gamma x\}$ for the orbit relation. A probability distribution on $\Gamma x$ is a function $p_x:\Gamma x\to[0,1]$ with $\sum_{z\in\Gamma x}p_x(z)=1$. A family $(p_x)_{x\in X}$ is \emph{continuous} if, for every $g\in\Gamma$, the function $x\mapsto p_x(gx)$ is continuous. Thus continuity is measured along the orbit coordinates supplied by the action.

\begin{definition}\label{def:orbit-amenability}
The orbit relation $R_\Gamma$ is \emph{topologically amenable} if, for every finite symmetric set $F\subseteq\Gamma$ and every $\varepsilon>0$, there are a finite set $D\subseteq\Gamma$ and a continuous family of probability distributions $(p_x)_{x\in X}$ on the orbits such that
\[
 \operatorname{supp}p_x\subseteq Dx=\{dx:d\in D\}\qquad(x\in X)
\]
and, for every $x\in X$ and every $g\in F$,
\begin{equation}\label{eq:orbit-variation}
 \|p_x-p_{gx}\|_1
 :=\sum_{z\in\Gamma x}|p_x(z)-p_{gx}(z)|<\varepsilon.
\end{equation}
\end{definition}

Both distributions in \eqref{eq:orbit-variation} are on the same orbit, so their masses are compared at the same points. These are distributions on \emph{orbit points}, not on elements of the abstract group $\Gamma$. The set $D$ is independent of $x$; it need not lie in the subgroup generated by $F$.

When $S$ is finite, it suffices to require \eqref{eq:orbit-variation} for $g\in S$. Indeed, if all elements of a finite set $F$ have $S$-length at most $L\geq1$, the generator estimate with $\varepsilon/L$ gives the required estimate by the triangle inequality along words. Also, $D$ lies in an $S$-ball of some radius $R$, so every $p_x$ is supported in the radius-$R$ neighbourhood of $x$ in $K$. This is the bounded-degree formulation of topological amenability in terms of continuous distributions on orbit graphs; see \cite[Section~10.3]{ElekTimarStrong}.

The same clopen graph can be presented by different stable actions, but its minimality and the topological amenability of its orbit relation do not depend on the chosen presentation \cite{RenaultBorel}.


\paragraph{Groupoids and bisections.}
A \emph{groupoid} $\G$ consists of arrows between the points of a \emph{unit space} $X=\G^{(0)}$. An arrow $g$ has source $s(g)$ and range $r(g)$, and the product $gh\in \G$ is defined when $s(g)=r(h)$. Multiplication is associative whenever the products are defined. Each $x\in X$ has a unit arrow $1_x$, with $1_{r(g)}g=g1_{s(g)}=g$, and every arrow has an inverse satisfying $g^{-1}g=1_{s(g)}$ and $gg^{-1}=1_{r(g)}$. A \emph{topological groupoid} carries a topology for which source, range, inversion and multiplication are continuous; multiplication is considered on
\[
\G^{(2)}=\{(g,h)\in\G\times\G:s(g)=r(h)\}
\]
with the subspace topology, and $X$ is identified with the subspace of unit arrows. We assume that $\G$ is Hausdorff and second countable and that $X$ is a Cantor space.

The groupoid is \emph{\'etale} if $s:\G\to X$ is a \emph{local homeomorphism}: every arrow $g\in\G$ has an open neighbourhood $V\subseteq\G$ such that $s(V)$ is open in $X$ and the restriction $s|_V:V\to s(V)$ is a homeomorphism. Equivalently, $r$ is a local homeomorphism. A \emph{bisection} is a subset $U\subseteq\G$ on which both $s$ and $r$ are injective. Its \emph{domain} is $s(U)$ and its \emph{range} is $r(U)$. For an open bisection, the restrictions of $s$ and $r$ are homeomorphisms onto open subsets of $X$, giving the partial homeomorphism
\[
\varphi_U=r\circ(s|_U)^{-1}:s(U)\longrightarrow r(U).
\]
The groupoid is \emph{ample} if it is \'etale and its unit space is zero-dimensional. Equivalently, compact-open bisections form a basis for its topology. In this case the source and range of a compact-open bisection are clopen in $X$. Finally, $\G$ is \emph{principal} if $s(g)=r(g)$ implies $g=1_{s(g)}$. From this point on, every groupoid considered in this paper is Hausdorff, second countable, ample and principal, with Cantor unit space.

The domain and range of a bisection need not be disjoint. In our setting, however, every compact-open bisection has a finite compact-open partition whose pieces either consist of units or have disjoint source and range. Indeed, $X$ is clopen in $\G$, and the non-unit part of the bisection is compact. At each of its arrows, principality gives two distinct endpoints; choose disjoint clopen neighbourhoods of them and shrink the bisection accordingly. A finite cover and disjoint refinement give the assertion.

\paragraph{The equivalence relation and a generating action.}
Principality implies that between two points there is at most one arrow. Thus $(r,s)$ identifies the underlying set of $\G$ with the equivalence relation
\[
R_{\G}=\{(y,x):\text{there is an arrow from }x\text{ to }y\}.
\]
The orbit of $x$ is its equivalence class. There are only countably many compact-open bisections: each is a finite union of members of a countable compact-open basis. Hence $R_{\G}$ is a countable Borel equivalence relation, being a countable union of graphs of partial homeomorphisms. The topology on $\G$ is generally \emph{not} the subspace topology inherited from $X\times X$. Each compact-open bisection $U$ provides a \emph{chart}: the homeomorphism $s|_U:U\to s(U)$ identifies the arrows of $U$ with their source points in the clopen set $s(U)\subseteq X$. These bisections form a basis for the topology of $\G$.

Enumerate all compact-open bisections with disjoint source and range. Each associated map extends to an involution of $X$ by using its inverse on the range and the identity elsewhere. The resulting involutions $\sigma_1,\sigma_2,\ldots$ define an action
\[
\Gamma_{\G}=\mathop{*}_{n\geq1}C_2\ \curvearrowright\ X
\]
whose orbits are exactly the $\G$-orbits. This is the clopen counterpart of the Feldman--Moore theorem, which represents a countable Borel equivalence relation by a countable group of Borel automorphisms \cite[Theorem~1]{FeldmanMoore}; in its involutive formulation the acting group may likewise be taken to be a countable free product of copies of $C_2$.

Observe that the action constructed above is stable. Indeed, each involution $\sigma_i$ comes with a continuous assignment $x\mapsto a_i(x)$ of an arrow from $x$ to $\sigma_i x$: use the defining bisection on its domain, the inverse bisection on its range, and unit arrows elsewhere. Continuity follows from the bisection charts and the fact that these three subsets of $X$ are clopen. For any $g\in\Gamma_{\G}$, choose a word in the generators representing $g$. Following the corresponding arrows and multiplying them gives an arrow $a_g(x)$ from $x$ to $gx$. This defines a continuous map $a_g:X\to\G$, by continuity of the generator assignments and groupoid multiplication. Principality implies that $gx=x$ exactly when $a_g(x)$ is a unit arrow.

We now verify that the unit arrows form a clopen subset of the arrow space $\G$. They form a closed set because $X$ is compact and $\G$ is Hausdorff. For openness, fix $x\in X$ and choose an open bisection $U$ containing $1_x$. The set $V=\{y\in X:1_y\in U\}$ is an open neighbourhood of $x$ in $X$, since $X$ carries the subspace topology. If $h\in U$ and $s(h)\in V$, then both $h$ and $1_{s(h)}$ lie in $U$ and have the same source. The injectivity of $s|_U$ therefore gives $h=1_{s(h)}$. Consequently,
\[
U\cap s^{-1}(V)=\{1_y:y\in V\}\subseteq X.
\]
The left-hand side is open in $\G$ and contains $1_x$. Every unit arrow thus has an open neighbourhood consisting only of unit arrows, which proves that $X$ is open in $\G$.

Since $a_g:X\to\G$ is continuous, it follows that
\[
\Fix(g)=\{x\in X:a_g(x)\in X\}=a_g^{-1}(X)
\]
is clopen in $X$. In concrete terms, if $gx=x$, then $a_g(x)$ is a unit arrow, and continuity ensures that $a_g(y)$ is still a unit arrow for all $y$ in some neighbourhood of $x$. Hence $gy=y$ throughout that neighbourhood. This holds for every $g\in\Gamma_{\G}$, proving stability.

Conversely, a stable action $\Gamma\curvearrowright X$ of a countable group on a Cantor space defines a Hausdorff ample principal groupoid on its orbit relation by using the sets $\{(gx,x):x\in U\}$, with $U$ clopen, as charts. Indeed, the charts given by $g,h\in\Gamma$ on clopen sets $U,V\subseteq X$ describe the same arrow at a source point $x\in U\cap V$ precisely when $gx=hx$. The set of such source points is
\[
\{x\in U\cap V:gx=hx\}=U\cap V\cap\Fix(h^{-1}g),
\]
which is clopen by stability. Hence the intersection of the two charts corresponds to a clopen subset of each of their domains. For the action $\Gamma_{\G}\curvearrowright X$ constructed above these charts recover the original topology, not just the original orbits.

We fix the action $\Gamma_{\G}\curvearrowright X$ constructed above once and for all. A \emph{clopen orbit map} is a homeomorphism between clopen subsets which, on each piece of a finite clopen partition of its domain, agrees with an element of $\Gamma_{\G}$. Because $\G$ is principal, a compact-open bisection is uniquely determined by the partial homeomorphism that it induces on the unit space. Conversely, every clopen orbit map is induced by a unique compact-open bisection: refine its domain into finitely many clopen pieces on which the map agrees with one of the involutions above, and take the corresponding compact-open arrow sets. Thus, in the principal setting of this paper, compact-open bisections and clopen orbit maps carry exactly the same information. From now on we use only the clopen orbit map terminology in the graph-theoretic arguments.

\paragraph{Bounded-degree clopen subgraphs.}
We now apply Definition~\ref{def:clopen-graph} to the stable action $\Gamma_{\G}\curvearrowright X$. For a finite symmetric set $S\subseteq\Gamma_{\G}$, let $H_S$ be the graph on $X$ in which distinct points $x,y$ are adjacent if $y=sx$ for some $s\in S$. The restricted action $\langle S\rangle\curvearrowright X$ is stable, so $H_S$ is a bounded-degree clopen graph in the sense of that definition. We call these graphs the \emph{bounded-degree clopen subgraphs} of the orbit structure of $\G$, and call $S$ a \emph{finite family of clopen generators}. Their connected components are the $\langle S\rangle$-orbits, which may be proper subsets of the full $\Gamma_{\G}$-orbits.

Equivalently, one may specify such a graph by finitely many clopen orbit maps and their inverses, omitting loops. To obtain a presentation $H_S$, partition the corresponding compact-open bisections as above, discard their unit pieces, and extend each remaining piece with disjoint domain and range to one of the involutions $\sigma_i$. These finitely many involutions give exactly the prescribed edges. Conversely, the maps $x\mapsto sx$, $s\in S$, are clopen orbit maps. In groupoid terms, the arrows corresponding to the oriented edges form a symmetric compact-open subset of $\G\setminus X$. Every such subset has a finite cover by compact-open bisections and therefore gives a bounded-degree clopen subgraph.

For a graph $K$ write $Kx$ for the neighbours of $x$, $B_K(x,r)$ for its radius-$r$ ball and $N_K(F)=F\cup\bigcup_{x\in F}Kx$. For a finite $D\subseteq\Gamma_{\G}$ put $DF=\{dx:d\in D,x\in F\}$. The inclusion $F\subseteq Dx$ means that every point of $F$ is obtained from $x$ using an element of the same finite set $D$. This is a bound in the full orbit, not necessarily a bound in the metric of an independently fixed graph $K$.

\paragraph{Minimality and invariant measures.}
The groupoid is minimal exactly when every $\Gamma_{\G}$-orbit is dense. An individual graph $H_S$ need not have dense components. A Borel probability measure $\mu$ is $\G$-invariant exactly when it is $\Gamma_{\G}$-invariant, or equivalently when
\[
\mu(gA)=\mu(A)\qquad(g\in\Gamma_{\G},\ A\subseteq X\text{ clopen}).
\]
This is also equivalent to preservation by every clopen orbit map. We denote the set of these measures by $\M$. Thus minimality and invariant measures refer to the full action, not to a single choice of finite generators.

\paragraph{Topological amenability.}
The groupoid $\G$ is topologically amenable exactly when the orbit relation of $\Gamma_{\G}\curvearrowright X$ satisfies Definition~\ref{def:orbit-amenability}. For $H_S$, we apply the same definition to the restricted action $\langle S\rangle\curvearrowright X$. We have
\begin{equation}\label{eq:amenability-locality}
\begin{gathered}
\G\text{ is topologically amenable}\\
\Longleftrightarrow\quad
H_S\text{ is topologically amenable for every finite symmetric }S\subseteq\Gamma_{\G}.
\end{gathered}
\end{equation}
Indeed, amenability passes to open subgroupoids, and every compact set of arrows lies in an open subgroupoid generated by finitely many compact-open bisections; see \cite[Proposition~5.1.1]{ADR} and \cite[Lemma~3.3]{AustadBonicke}.

For later use, the probabilities in Definition~\ref{def:orbit-amenability} may be chosen in the form
\begin{equation}\label{eq:orbit-amenability}
 p_x=\frac1n\sum_{j=1}^{n}\delta_{g_{P,j}x}\quad(x\in P\in\mathcal P),
 \qquad g_{P,j}\in D,
\end{equation}
where $\mathcal P$ is a finite clopen partition, $n\geq1$ and $D\subseteq\Gamma_{\G}$ is finite. Repetitions are allowed, so all weights have the form $k/n$. To obtain this form, start with a smaller error. Stability gives a finite clopen partition on which the equalities $dx=d'x$, $d,d'\in D$, are fixed; on a finer clopen partition, approximate the resulting continuous probability vectors by rational ones with a common denominator. For $H_S$, we use $D\subseteq\langle S\rangle$, so the supports have uniformly bounded $H_S$-radius. For the full orbit relation, $D$ need not lie in $\langle S\rangle$.

\paragraph{Finite amplifications and clopen tilings.}
For $m\geq1$ let $[m]=\{1,\ldots,m\}$. On $X\times[m]$, allowed local maps have the form $(x,i)\mapsto(gx,j)$ on finitely many clopen pieces. The orbit of $(x,i)$ is $\Gamma_{\G} x\times[m]$. Stability and the local neighbourhood description remain valid. Amenability also passes to this finite amplification: distribute each orbit probability uniformly over the sheets. The resulting probabilities agree under a sheet change $(x,i)\mapsto(x,j)$, while for a clopen orbit map $(x,i)\mapsto(gx,i)$ their $\ell^1$-variation is the same as that of the original probabilities at $x$ and $gx$; the same locality statement then applies to its bounded-degree clopen subgraphs.

A \emph{clopen packing} is a finite family of towers $\{\tau_1,\tau_2,\cdots\tau_m\}$. The \emph{tower} $\tau_i$  consists of a clopen base $B_i\subset X$ and finitely many clopen orbit maps
\[
\varphi_{i,j}:B_i\longrightarrow X\quad(1\leq j\leq h_i),
\qquad \varphi_{i,1}=\id_{B_i},
\]
whose ranges, over all $i,j$, are pairwise disjoint. The sets $\varphi_{i,j}(B_i)$ are called the \emph{levels} of the tower. Its tiles are the finite sets $T_{i,b}=\{\varphi_{i,j}(b):1\leq j\leq h_i\}$, $b\in B_i$. The union of all tiles is clopen; we denote it by $[\mathcal T]$. A packing covering $X$ is a \emph{clopen tiling}. The equivalence relation consisting of all pairs within a tile is a compact-open elementary subgroupoid, and every such subgroupoid has this tower description. To see the converse, describe its compact arrow space by finitely many clopen orbit maps, choose a finite clopen colouring separating points in the same class, and use the least colour in each class as its base; finite clopen refinement then lists the levels. See also \cite{MatuiHomology}.

Each tower is a clopen family of tiles of one fixed combinatorial type, parametrized by its base. For a fixed $i$, the point $b\in B_i$ parametrizes the tile $T_{i,b}$, while the level $\varphi_{i,j}(B_i)$ records the same position $j$ across all tiles in that tower.

In particular a clopen packing has uniformly bounded tile sizes. There is a common finite symmetric $D\subseteq\Gamma_{\G}$ such that
\begin{equation}\label{eq:orbit-bounded-tiles}
T\subseteq Dx\qquad(x\in T,\ T\in\mathcal T).
\end{equation}
The same $D$ can be used for a finite family of packings. The tiles therefore have bounded diameter in some bounded-degree clopen graph, but not necessarily in a previously prescribed graph $K$. The tower description makes all finite cardinality and incidence tests clopen.

\paragraph{F\o lner sets and almost finiteness.}
For a bounded-degree clopen subgraph $K$ and a nonempty finite set $F$ contained in one \emph{full} orbit, put
\[
\partial_KF=\{x\in F:Kx\not\subseteq F\}.
\]
We call $F$ $\delta$-F\o lner for $K$ if $|\partial_KF|<\delta|F|$. The set $F$ may meet several $K$-components. The \emph{F\o lner property of the orbit structure} means that such an $F$ exists for every $K$ and every $\delta>0$. For this existence statement one may equivalently require $F$ to lie in one $K$-component: split it into its component intersections and choose one with the smallest relative boundary. Minimality implies that there is a finite $D\subseteq\Gamma_{\G}$ such that, for every $x\in X$, one of these configurations is contained in $Dx$. It does not assert their occurrence in uniformly bounded $K$-balls.

The orbit structure is \emph{almost finite} if, for every $K$ and $\varepsilon>0$, it has a clopen tiling all of whose tiles are $\varepsilon$-F\o lner for $K$. This is exactly Matui's almost finiteness. Indeed, if $d\geq1$ bounds the degree of $K$, then
\[
|N_K(F)\setminus F|\leq d|\partial_KF|,
\qquad |\partial_KF|\leq d|N_K(F)\setminus F|.
\]
Every compact set of arrows is described by finitely many clopen orbit maps, so these inequalities translate the two boundary conventions in both directions. The finite orbit bound \eqref{eq:orbit-bounded-tiles}, rather than a bound in $K$, is the appropriate bound on the tiles.

For a clopen tiling $\mathcal E$ let
\[
D_K(\mathcal E)=\{x:Kx\not\subseteq\mathcal E(x)\},
\]
where $\mathcal E(x)$ is the tile containing $x$. We call the structure \emph{strongly almost finite} if, for every $K$ and $\tau,\eta>0$, there are clopen tilings $\mathcal E_1,\ldots,\mathcal E_m$, all of whose tiles are $\tau$-F\o lner for $K$, with
\begin{equation}\label{eq:strong-af-definition}
\frac1m\sum_{i=1}^{m}\mathbf1_{D_K(\mathcal E_i)}(x)<\eta\qquad(x\in X).
\end{equation}
Thus every point is an interior point in an arbitrarily large proportion of finitely many F\o lner tilings.

\paragraph{Comparison, the type semigroup and the full group.}
For clopen subsets $A,B$ of finite amplifications, write $A\preceqcl B$ if there is an injection $A\to B$ given by allowed orbit maps on finitely many clopen pieces. We call $A$ and $B$ \emph{equidecomposable} if there are finite clopen partitions
\[
A=\bigsqcup_{i=1}^{q}A_i,\qquad B=\bigsqcup_{i=1}^{q}B_i,
\]
and, for each $i$, a homeomorphism $\varphi_i:A_i\to B_i$ given by an allowed orbit map. These partitions and maps form an \emph{equidecomposition} of $A$ and $B$.

The \emph{clopen type semigroup} consists of equidecomposition classes $[A]$ of these sets, with addition given by disjoint union after relabelling sheets. Its algebraic preorder satisfies $[A]\leq[B]$ exactly when $A\preceqcl B$. It is the type semigroup of $\G$, since the allowed maps are exactly its compact-open local maps. Pure infiniteness means
\[
2[A]\leq[A]\qquad(\varnothing\ne A\subseteq X\text{ clopen}),
\]
that is, every nonempty clopen set contains two disjoint clopen orbit copies of itself. Finally, the homeomorphisms of $X$ which are piecewise elements of $\Gamma_{\G}$ on a finite clopen partition form the topological full group $[[\G]]$.

The translation from groupoid language to graph language is now fixed. In the clopen proofs in Sections~\ref{sec:clopen} and~\ref{sec:pure}, we use the stable orbit structure, its clopen orbit maps and its bounded-degree subgraphs. All maps used in these clopen constructions are clopen orbit maps and therefore send each point to a point in the same $\Gamma_{\G}$-orbit; the chosen bounded-degree subgraph specifies which boundary is being measured.

\section{The Borel case}\label{sec:borel}

This section is completely self-contained. Let $G$ be a bounded-degree Borel graph on a standard Borel space $X$. We write $d_G$ for graph distance and
\[
B_r(x)=\{y\in X:d_G(x,y)\le r\}.
\]
For a finite set $F\subseteq X$, its inner boundary is
\[
\partial F=\{x\in F:\text{ there exists }y\notin F\text{ with }xGy\}.
\]
We call $F$ $\delta$-F\o lner if
\[
|\partial F|<\delta |F|.
\]

\begin{definition}[Uniform Borel amenability]
The graph $G$ is uniformly Borel amenable if for every $\alpha>0$ there are $R<\infty$ and a Borel family of probability measures $x\mapsto\lambda_x$ on $X$ such that
\[
\operatorname{supp}(\lambda_x)\subseteq B_R(x)
\]
and
\[
xGy\quad\Longrightarrow\quad \|\lambda_x-\lambda_y\|_1<\alpha.
\]
\end{definition}

\begin{definition}[F\o lner property]
The graph $G$ has the F\o lner property if for every $\delta>0$ there is $r<\infty$ such that every ball $B_r(x)$ contains a nonempty finite $\delta$-F\o lner set.
\end{definition}

For $m\ge1$, put
\[
[m]=\{1,\ldots,m\}.
\]
For $A\subseteq X$, write
\[
mA=A\times[m].
\]
If $A\subseteq X\times\{1,\ldots,m\}$ and $B\subseteq X\times\{1,\ldots,n\}$ are Borel, we write $A\preceqG B$ if there are a Borel injection $\Phi:A\to B$ and $D<\infty$ such that
\[
d_G\bigl(\pi_X(z),\pi_X(\Phi(z))\bigr)\le D\qquad(z\in A),
\]
where $\pi_X(x,k)=x$. All distance bounds in this section refer to the fixed graph $G$.

A \emph{Borel packing} consists of a Borel set $Y\subseteq X$ and a finite Borel equivalence relation on $Y$ whose classes have uniformly bounded $G$-diameter. Its classes are called tiles, and we write $[\mathcal P]=Y$ for the union of the tiles. A packing is $\delta$-F\o lner if all of its tiles are $\delta$-F\o lner.

When $G$ is uniformly Borel amenable and has the F\o lner property, we use two results from Elek--Tim\'ar \cite{ElekTimarUBA}. The first is the randomized almost-finiteness conclusion of \cite[Theorem~5]{ElekTimarUBA}; the second is \cite[Proposition~3.1]{ElekTimarUBA}.

\medskip
\noindent\textbf{Packing input I.}
For every $\theta>0$ there are Borel packings
\[
\mathcal P_1,\ldots,\mathcal P_m
\]
with a common diameter bound such that every $\mathcal P_i$ is $\theta$-F\o lner and
\[
\bigl|\{i:x\in[\mathcal P_i]\}\bigr|>(1-\theta)m
\qquad(x\in X).
\]

\medskip
\noindent\textbf{Packing input II.}
For every $t>0$ there are a $t$-F\o lner Borel packing $\mathcal T$ of uniformly bounded diameter and a number $\rho>0$ such that, for every nonempty finite $\rho$-F\o lner set $F$,
\begin{equation}
\left|\bigcup\{T\in\mathcal T:T\subseteq F\}\right|\ge (1-t)|F|.
\label{eq:borel-packing-II}
\end{equation}

\begin{theorem}[Borel almost finiteness]\label{thm:borel-main}
Assume that $G$ is uniformly Borel amenable and has the F\o lner property. Then for every $\varepsilon>0$ there is a Borel partition of $X$ into finite sets of uniformly bounded diameter such that every part $T$ satisfies
\[
|\partial T|<\varepsilon |T|.
\]
\end{theorem}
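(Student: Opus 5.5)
The plan is to turn the randomized almost-finiteness of Packing input I into one Borel partition, using a three-to-two cancellation in the spirit of Glasner--Liu, and then to absorb the leftover set with Packing input II.

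\emph{Step 1: a packing covering most of each F\o lner set.} Fix $\varepsilon>0$ and choose small parameters $t\ll\theta\ll\varepsilon$. Apply Packing input II with $t$. This gives a $t$-F\o lner Borel packing $\mathcal T$ and a threshold $\rho$ such that every $\rho$-F\o lner set is covered, up to proportion $t$, by tiles of $\mathcal T$ lying inside it.

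\emph{Step 2: the uncovered set is small in the comparison sense.} Apply Packing input I with $\theta\le\rho$. This gives packings $\mathcal P_1,\dots,\mathcal P_m$ with a common diameter bound. Each tile $P$ of each $\mathcal P_i$ is $\rho$-F\o lner, so the $\mathcal T$-tiles inside $P$ cover all but $t|P|$ of its points. Averaging over $i$, and using that every $x$ lies in more than $(1-\theta)m$ of the $[\mathcal P_i]$, I want to show that the uncovered set $U=X\setminus[\mathcal T]$ is small in a comparison sense:
\[
 mU\preceqG N\cdot[\mathcal T]^{\mathrm{bd}},
\]
where $[\mathcal T]^{\mathrm{bd}}$ is a small-density portion of the covered points, say the boundaries of the tiles. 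Concretely, each uncovered point $x$ and each index $i$ with $x\in[\mathcal P_i]$ should be matched, inside the tile $\mathcal P_i(x)$, to covered points. The matchings inside each finite tile come from a Borel choice, namely a lexicographic choice after a Borel proper colouring of bounded-distance graphs, as in Kechris--Solecki--Todorcevic. The indices $i$ with $x\notin[\mathcal P_i]$ number fewer than $\theta m$, and these must be charged separately.

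\emph{Step 3: cancellation, the main obstacle.} I must cancel the factor $m$: from $mU\preceqG m\cdot W$ with a slack factor, deduce $U\preceqG W'$ for a set $W'$ that is a controlled small fraction of each $\mathcal T$-tile. This is where the three-to-two comparison method enters. I would first attempt a Borel version of the Glasner--Liu argument: iterate $3A\preceq 2B\Rightarrow$ a reduction in multiplicity, each time by a Borel matching in a bounded-degree auxiliary bipartite graph. Such matchings exist by Hall's condition with surplus, and are Borel up to a null-free error if one uses augmenting paths of bounded length. The residual unmatched set at each stage is then fed back into the next stage. The key worry is that bounded-length augmentation in Borel graphs leaves errors, and the slack coming from the ratio $3$ to $2$ must absorb them uniformly in $x$, not only on average.

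\emph{Step 4: absorbing the leftover.} Given a Borel injection $\Phi:U\to[\mathcal T]$ of bounded displacement, I would make the image use at most $\varepsilon|T|/2$ points of each tile $T$ and lie within bounded distance. Then I enlarge each tile $T$ of $\mathcal T$ to $T\cup\Phi^{-1}(T)$. The result is a Borel partition of $X$ into sets of uniformly bounded diameter. The boundary of each enlarged tile is contained in $\partial T$ together with the added points, which lie within distance $D$ of $T$, so at most $(\varepsilon/2)|T|$ boundary points are added. Choosing $t$ so that $t<\varepsilon/2$ then gives $|\partial T'|<\varepsilon|T'|$.
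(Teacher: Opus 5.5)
Your architecture (Packing input II, the uncovered set, a reserve inside each tile, a bounded-displacement Borel injection, then enlargement of tiles) is the paper's, and Steps 1 and 4 match its final assembly. There is, however, a genuine gap at Step 3. You flag it but do not resolve it, and the route you sketch does not close.

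\textbf{Why Step 3 fails as sketched.} Three-to-two applies only to relations of the form $3^hU\preceq_G 2^hW$. Step 2 yields $k_1U\preceq_G k_2W$, where $k_2$ is of the order of the number $m$ of packings. The ratio $k_1/k_2$ is limited by the ratio of reserve density to uncovered density, and that is fixed before Packing input I determines $m$. Taking multiples does not change the ratio. Yet passing to $3^hU\preceq_G2^hW$ needs $k_1\ge 3^h$ and $k_2\le 2^h$, hence a ratio at least $(3/2)^h\ge k_2^{\log_2(3/2)}$, which grows with $m$. Moreover, Hall's condition does not by itself produce Borel matchings. Bounded-length augmentation leaves a residual unmatched set that your sketch never controls. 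Finally, ``Borel up to a null-free error'' has no meaning here: you need an exact Borel injection defined at every point.

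\textbf{The missing idea.} This is the heart of the paper's uniform F\o lner comparison theorem: manufacture an unbounded ratio by passing to the unmatched sets of a well-chosen matching.
View the injection $J:A\times[N]\to B\times[N]$ from Step 2 as a finite-chart bipartite multigraph. Take a Borel matching $M$ with no augmenting path of length at most $2q-1$; this needs only finitely many Borel stages, each flipping a maximal vertex-disjoint family of shortest augmenting paths. Put $U=A\setminus\dom M$ and $V=B\setminus\ran M$.
\begin{itemize}
\item The alternating map $(M^{-1}\times\id)\circ J$ has $q+1$ pairwise disjoint iterates on $U\times[N]$, so $(q+1)NU\preceq_G NA$.
\item For \emph{every} matching, a uniform surplus $|B\cap F|-|A\cap F|\ge\gamma|F|$ on all $\rho$-F\o lner sets, together with the F\o lner property, forces $V$ to meet every ball of a fixed radius. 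Hence $X\preceq_G LV$.
\item Choosing $2^h\ge NL$ and then $(q+1)N\ge 3^h$ gives $3^hU\preceq_G 2^hV$. After $h$ applications of three-to-two, $U\preceq_G V$, and $M$ takes care of $A\setminus U$.
\end{itemize}
This requires a reserve fixed in advance with such a uniform surplus. The paper uses $\lceil 3t|T|\rceil$ points per tile, giving surplus $(2t-3t^2)|F|$. Tile boundaries, which can have arbitrarily small density, will not do.

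\textbf{Other missing pieces.}
\begin{itemize}
\item The Borel three-to-two lemma itself must be proved. It is the one place where uniform Borel amenability enters beyond the packing inputs. In the auxiliary degree-$3$/degree-$1$ multigraph, components containing a leaf or a cycle are handled directly. On the three-regular tree components, one pushes amenability measures onto the tree and selects at each vertex the incident edge of least half-tree mass.
\item In Step 2, the indices $i$ with $x\notin[\mathcal P_i]$ are absorbed by using $\ell+1$ source copies against $\ell$ target copies per tile, with $\ell\gamma\ge1$ and $\theta<1/(\ell+1)$.
\item Components of size at most $\lceil 1/t\rceil$ must be removed first, so that $|T|>1/t$ and the reserve fits inside each tile.
\end{itemize}
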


The two external ingredients used in the proof have different roles. From Elek--Tim\'ar \cite{ElekTimarUBA} we use the two packing statements above, in particular the large F\o lner packing of Packing input II. From Glasner--Liu \cite{GlasnerLiu} we use not their almost-finiteness theorem, but the finite matching mechanism developed in Sections~3--4 and used there to prove the three-to-two theorem in Section~5: the finite elimination of short augmenting paths and, ultimately, the passage
\[
3A\preceq_G 2B\quad\Longrightarrow\quad A\preceq_G B.
\]
We adapt this mechanism to the Borel finite-chart setting. Combined with the Elek--Tim\'ar packing, it yields the uniform F\o lner comparison theorem below, which turns a uniform local surplus of reserve points on every sufficiently F\o lner finite set into a global bounded-displacement Borel injection. This is the additional step that allows the uncovered set of the packing to be absorbed everywhere, rather than only modulo the exceptional compressible part.

\subsection{The three-to-two lemma after Glasner and Liu}

\begin{lemma}[Borel three-to-two]\label{lem:borel-three-two}
Assume that $G$ is uniformly Borel amenable. If $A,B\subseteq X$ are Borel and
\[
3A\preceqG 2B,
\]
then
\[
A\preceqG B.
\]
Moreover, if $J:3A\to2B$ is the given injection, the concluding injection sends each $a\in A$ to the underlying $B$-coordinate of one of
\[
J(a,1),\quad J(a,2),\quad J(a,3).
\]
\end{lemma}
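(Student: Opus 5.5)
The idea is to turn the statement into a Borel matching problem with bounded expansion and solve it by the finite augmenting-path mechanism of Glasner--Liu, using uniform Borel amenability to make the procedure stabilize at every point.

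\textbf{Reduction to a bipartite graph.} From the injection $J:3A\to2B$ define a bounded-degree Borel bipartite graph $\mathcal H$ between $A$ and $B$: join $a\in A$ to $\pi_X(J(a,k))$ for $k=1,2,3$. Each $a$ then has at most three neighbours, counted with multiplicity as three edge-slots. Each $b\in B$ meets at most two edge-slots, since $J$ is injective into $B\times[2]$. Every edge has $G$-length at most the displacement bound $D_J$ of $J$. Counting edge-slots gives, for every finite $S\subseteq A$,
\[
|N_{\mathcal H}(S)|\ge \tfrac32|S|.
\]
It suffices to find a Borel injection $\Phi:A\to B$ with $a\,\mathcal H\,\Phi(a)$ for all $a$. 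The ``moreover'' clause then holds by construction, and the displacement is at most $D_J$.

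\textbf{Killing short augmenting paths.} Build Borel partial matchings $M_1\subseteq$-successively improved $M_2,\dots$ of $\mathcal H$ such that $M_k$ admits no augmenting path of length at most $2k+1$. The step from $M_k$ to $M_{k+1}$ is done in Borel fashion as follows.
\begin{enumerate}
\item Take a Borel proper colouring, with finitely many colours, of the bounded-degree intersection graph of short augmenting paths (Kechris--Solecki--Todorcevic).
\item Flip the paths colour class by colour class.
\end{enumerate}
This is the finite elimination step of Glasner--Liu. The expansion $3/2$ then gives a local estimate: if $a$ is still unmatched in $M_k$, its alternating tree of depth $k$ contains at least $(3/2)^{k}$ matched $A$-vertices within $G$-distance $O(kD_J)$. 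In other words, an unmatched vertex forces an exponentially large matched region around it.

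\textbf{Stabilization via uniform Borel amenability.} The main obstacle is to get stabilization at \emph{every} point, not merely almost everywhere. Each $a$ must be matched eventually, and its partner must change only finitely (indeed boundedly) often; otherwise the limit is not a well-defined Borel injection, and augmenting paths could drift unboundedly.
\begin{enumerate}
\item To handle this, first use uniform Borel amenability with small $\alpha$ to obtain the family $\lambda_x$ supported in $B_R(x)$.
\item Via these measures (or the packings of Packing input~I), cut $X$ into finite bounded-diameter Borel charts in which each point is interior for most charts.
\item In a finite chart, Hall's theorem with surplus $3/2$ gives an honest matching of the interior $A$-points. The surplus absorbs the boundary loss, because the F\o lner estimate makes the boundary a small fraction while the expansion supplies a $1/2$-fraction of spare $B$-points.
\item Run the augmenting-path procedure starting from these chart matchings, and charge each flip at $a$ against $\lambda$-mass near $a$. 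The $\ell^1$-smallness of $\|\lambda_x-\lambda_y\|_1$ along edges shows that only boundedly many flips can pass through any given vertex.
\end{enumerate}
This bound is the point I expect to be hardest. I would try first to adapt the Glasner--Liu counting argument, replacing their invariant-measure averages by the local averages $\lambda_x$.

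Granting this bound, the assignments stabilize after finitely many stages, uniformly in the point. The limit $\Phi$ is therefore Borel, injective, total on $A$, and uses only $\mathcal H$-edges. This gives $A\preceqG B$, with $\Phi(a)$ equal to the $B$-coordinate of some $J(a,k)$.
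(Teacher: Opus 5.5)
Your reduction to $\mathcal H$ and the bound $|N_{\mathcal H}(S)|\ge\tfrac32|S|$ are correct. The proof has a genuine gap at the step you flag, and the missing piece is not only the flip bound. Nothing in your argument forces \emph{totality}, and the tools you invoke cannot force it.

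Collapse each triple $\{J(a,1),J(a,2),J(a,3)\}$ to a vertex $v_a$, and each $b\in B$ to an edge joining the vertices carrying $(b,1)$ and $(b,2)$; points of $2B\setminus J(3A)$ become degree-one vertices. A matching is then a choice of at most one incident edge at each $v_a$, with no edge chosen at both ends. Suppose $a$ is unmatched and there is no augmenting path of length at most $2k+1$. Then the radius-$k$ ball around $v_a$ is isomorphic to a ball in the three-regular tree, and every edge in it is chosen by its endpoint farther from $v_a$. In particular, on a component that is a three-regular tree, orienting every edge toward $v_a$ is a matching with no augmenting path of any length, and it leaves $a$ unmatched.

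Such components occur with $G$ uniformly amenable. The Cayley graph of $\mathbb Z_2\wr\mathbb Z$ contains a Lipschitz injective image of the three-regular tree. From it one builds such a $J$ on a countable $X$, where every set is Borel. This has three consequences:
\begin{itemize}
\item exponential growth of alternating trees contradicts nothing;
\item no fixed number of stages is guaranteed to suffice, since without compactness the distance from $v_a$ to the nearest degree-one vertex or cycle is unbounded;
\item even if the assignments stabilized, the limit could leave such an $a$ unmatched.
\end{itemize}
``Charging flips against $\lambda$-mass'' gives no mechanism excluding this. Separately, your step~3 appeals to a F\o lner estimate and to Packing input~I, which need the F\o lner property; that is not a hypothesis of this lemma. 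Hall's condition inside a chart holds anyway, without any surplus.

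What is missing is a direct use of amenability that selects edges on the tree components. The paper's proof of this lemma uses no augmenting paths. In the collapsed multigraph it proceeds component by component:
\begin{itemize}
\item on components containing a degree-one vertex, each vertex points toward the nearest such vertex;
\item on components containing a cycle, it chooses pairwise disjoint shortest cycles with a Kechris--Solecki--Todorcevic colouring, orients them, and points the other vertices toward them;
\item on three-regular tree components, it pushes $\lambda_v$ to the tree by the nearest-point map, sets $f(v,e)=\nu_v(T_{v\to w})$, and lets $v$ choose an edge minimizing $f$.
\end{itemize}
Disjointness of the three half-trees gives $\min_e f(v,e)\le\tfrac13$. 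The bound $\|\nu_v-\nu_w\|_1<\tfrac14$ gives $f(v,e)+f(w,e)>\tfrac34$ for adjacent $v,w$, so no edge is chosen at both ends.

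Augmenting paths appear only afterwards, in the uniform F\o lner comparison theorem. There they yield $(q+1)NU\preceq_G NA$, and it is this lemma that turns such estimates into an actual injection. In the clopen version, augmenting paths are used off the tree part only because compactness bounds the certificate radius.
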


\begin{proof}
If $A=\varnothing$, there is nothing to prove. Fix a Borel injection
\[
J:3A\longrightarrow2B
\]
and $D<\infty$ such that, whenever
\[
J(a,i)=(b,j),
\]
we have
\begin{equation}
d_G(a,b)\le D.
\label{eq:borel-three-two-displacement}
\end{equation}
Put $Y=2B$ and $F=J(3A)$. Define a Borel permutation $p$ of $Y$ by
\[
p(J(a,1))=J(a,2),\qquad
p(J(a,2))=J(a,3),\qquad
p(J(a,3))=J(a,1)
\]
for each $a\in A$, and by $p(y)=y$ for $y\in Y\setminus F$. Define also
\[
q(b,1)=(b,2),\qquad q(b,2)=(b,1).
\]

Fix a Borel linear order $<$ on $Y$. Let $V\subseteq Y$ contain the $<$-least point of every $p$-orbit, and let
\[
\pi:Y\longrightarrow V
\]
map a point to the representative of its $p$-orbit. We form a Borel multigraph $H$ with vertex set
\[
V(H)=V
\]
and edge set
\[
E(H)=\{e_b:b\in B\}.
\]
For every $b\in B$, the edge $e_b$ has endpoints
\[
\pi(b,1),\qquad \pi(b,2).
\]
Thus $e_b$ is a loop if these two endpoints coincide, and different $b$'s may give parallel edges. As usual, a loop contributes two to the degree of its vertex. Consequently, a vertex corresponding to a three-point $p$-orbit has degree three, while a vertex corresponding to a fixed point of $p$ has degree one.

The degree-three vertices are precisely the vertices
\[
v_a=\pi(J(a,1))=\pi(J(a,2))=\pi(J(a,3)),
\qquad a\in A.
\]
Thus they are naturally indexed by the points of $A$. The degree-one vertices come from the fixed points of $p$ in $Y\setminus F$; they are auxiliary vertices of $H$, and no edge has to be selected at them.

It is therefore enough to select, in a Borel way, one edge incident to each degree-three vertex $v_a$, with the requirement that no non-loop edge is selected at two distinct degree-three endpoints. Such a selection defines the desired bounded-displacement injection
\[
I:A\longrightarrow B.
\]
Namely, if the edge selected at $v_a$ is $e_b$, define
\[
I(a)=b.
\]
Since $e_b$ is incident to $v_a$, one of the two points $(b,1),(b,2)$ belongs to the $p$-orbit
\[
\{J(a,1),J(a,2),J(a,3)\}.
\]
Hence for some $i\in\{1,2,3\}$ and $j\in\{1,2\}$ we have
\[
J(a,i)=(b,j).
\]
In particular, $I(a)=b$ is the underlying $B$-coordinate of one of the three original $J$-images of $a$. If $e_b$ is a loop at $v_a$, this still simply gives $I(a)=b$; no further choice is involved.

If $I(a)=I(a')=b$ for two distinct points $a,a'\in A$, then the same non-loop edge $e_b$ has been selected at the two distinct degree-three vertices $v_a$ and $v_{a'}$, contrary to our requirement. Thus $I$ is injective. Finally, the displayed equality $J(a,i)=(b,j)$ and \eqref{eq:borel-three-two-displacement} give
\[
d_G(a,I(a))=d_G(a,b)\le D.
\]
Thus $I$ has the required bounded displacement.

From this point on, it remains only to construct the Borel edge selection just described. Use the fixed Borel order on $Y$ also to order the edges, by declaring $e_b<e_c$ whenever $(b,1)<(c,1)$.

We first treat the components containing a degree-one vertex. Let $L\subseteq V(H)$ be the Borel set of all degree-one vertices. A degree-one vertex selects no edge. If $v$ is a degree-three vertex in the $H$-saturation of $L$, select the least incident edge $\{v,w\}$ such that
\begin{equation}
d_H(w,L)=d_H(v,L)-1.
\label{eq:borel-leaf-direction}
\end{equation}
Thus the selected edge points one step along a shortest path from $v$ toward a degree-one vertex. Since $H$ is locally finite, the distance to the Borel set $L$ is Borel, so this selection is Borel. No non-loop edge is selected at both endpoints, because along a selected edge the distance to $L$ strictly decreases.

We next treat the components which contain no degree-one vertex but do contain a cycle. Such a component is three-regular. For $n\ge1$, let $X_n$ be the union of those components whose shortest cycle has length $n$, where a loop and a pair of parallel edges count as cycles of lengths one and two. The sets $X_n$ are Borel. Indeed, for each $m$ the set of vertices lying on an $m$-cycle is Borel, and its saturation under the connectedness relation of the locally finite Borel graph $H$ is Borel; $X_n$ is obtained by requiring the presence of an $n$-cycle and the absence of cycles of smaller length.

Fix $n$. On $X_n$, consider the Borel graph which joins two distinct vertices whenever their $H$-distance is at most $n$. This graph has bounded degree, so by the Kechris--Solecki--Todorcevic finite Borel colouring theorem \cite{KST} there is a Borel colouring
\[
\kappa_n:X_n\longrightarrow\{1,\ldots,r_n\}
\]
such that vertices of the same colour are at $H$-distance greater than $n$.

Let $C_n\subseteq X_n$ be the Borel set of vertices which lie on an $n$-cycle. In each connected component of $X_n$, let $c$ be the least colour assumed by $\kappa_n$ on $C_n$, and let $S_n$ consist of all vertices of $C_n$ having this colour. This is a Borel set, and every component of $X_n$ meets $S_n$. For each $x\in S_n$, choose, in the fixed Borel order, one $n$-cycle $C_x$ containing $x$. These cycles are pairwise vertex-disjoint. Indeed, if $C_x$ and $C_y$ met, then
\[
d_H(x,y)\le n,
\]
whereas $x$ and $y$ have the same colour in their component and hence must be at distance greater than $n$.

Let $D_n$ be the union of the vertices of the cycles $C_x$, $x\in S_n$. Orient each non-loop cycle $C_x$ in one of its two directions using the fixed Borel orders on vertices and edges, and let every vertex on it select its outgoing cycle edge; if $C_x$ is a loop, its unique vertex selects that loop. Every degree-three vertex $v\in X_n\setminus D_n$ selects the least incident edge $\{v,w\}$ for which
\[
d_H(w,D_n)=d_H(v,D_n)-1.
\]
Since $D_n$ meets every component of $X_n$, this is defined everywhere on $X_n\setminus D_n$. Again the selection is Borel, and no non-loop edge is selected at both endpoints: outside $D_n$ the distance to $D_n$ strictly decreases, while vertices of $D_n$ select only their prescribed cycle edges.

After these two constructions, the only components on which no edge has yet been selected are three-regular trees. We now use uniform Borel amenability.

Set
\[
\whX=X\times\{1,2\}.
\]
Define a bounded-degree Borel graph $\whG$ on $\whX$ by
\[
(x,i)\,\whG\,(y,i)\quad\Longleftrightarrow\quad xGy,
\]
and
\begin{equation}
(x,1)\,\whG\,(x,2)\qquad(x\in X).
\label{eq:borel-amplified-graph}
\end{equation}
Uniform Borel amenability passes from $G$ to $\whG$. More explicitly, if $x\mapsto\lambda_x$ is an amenability witness for $G$ with
\[
\operatorname{supp}(\lambda_x)\subseteq B_R^G(x),
\]
define
\begin{equation}
\widehat\lambda_{(x,i)}(y,j)=\frac12\lambda_x(y),
\qquad i,j\in\{1,2\}.
\label{eq:borel-amplified-probability}
\end{equation}
Then
\[
\operatorname{supp}(\widehat\lambda_{(x,i)})\subseteq B_{R+1}^{\whG}(x,i).
\]
The two measures at the ends of a vertical edge in \eqref{eq:borel-amplified-graph} are equal, while along a horizontal edge their $\ell^1$-distance is $\|\lambda_x-\lambda_y\|_1$.

The vertex set $V$ of $H$ is a Borel subset of $Y\subseteq\whX$. If two vertices $v,w\in V$ are joined by an edge of $H$, then
\begin{equation}
d_{\whG}(v,w)\le K_0:=4D+3.
\label{eq:borel-incidence-displacement}
\end{equation}
Indeed, let the edge be $e_b$. Its two endpoints are the representatives of the $p$-orbits containing $(b,1)$ and $(b,2)$, respectively. If $y,y'$ lie in the same three-point $p$-orbit, then by \eqref{eq:borel-three-two-displacement} their underlying $X$-coordinates are both within distance $D$ of the same point of $A$, and hence
\[
d_{\whG}(y,y')\le 2D+1.
\]
The two points $(b,1)$ and $(b,2)$ are at $\whG$-distance one, which proves \eqref{eq:borel-incidence-displacement}.

Apply uniform Borel amenability of $\whG$ with edge error smaller than $1/(4K_0)$. By the triangle inequality we obtain Borel probabilities $v\mapsto\lambda_v$ on $\whX$, supported in a common finite-radius ball about $v$, such that
\begin{equation}
d_{\whG}(v,w)\le K_0
\quad\Longrightarrow\quad
\|\lambda_v-\lambda_w\|_1<\frac14.
\label{eq:borel-ambient-variation}
\end{equation}

Let $E_H$ be the connectedness relation of $H$ restricted to the union of its three-regular tree components. Since $H$ is locally finite and Borel,
\[
v\,E_H\,w
\quad\Longleftrightarrow\quad
\text{there is an $H$-path from $v$ to $w$ of some finite length}
\]
is a Borel relation.

Fix one of these tree components $T$. For every point $z$ in the same $\whG$-component as $T$, let $r_T(z)$ be a vertex of $T$ which is closest to $z$ in the $\whG$-metric. If there is more than one closest vertex, choose the $<$-least one. Thus $r_T$ is simply the nearest-point map from the ambient $\whG$-component onto the tree $T$.

This choice is Borel. Indeed, starting from $z$, inspect the finite balls $B_n^{\whG}(z)$ for $n=0,1,2,\ldots$ until one first meets $T$, and then take the $<$-least point of that finite intersection. Since membership in the tree component $T=[v]_{E_H}$ is Borel, this defines the maps $r_T$ in a Borel way, uniformly over all three-regular tree components.

For $v\in T$, define a probability measure on the vertex set of $T$ by
\begin{equation}
\nu_v=(r_T)_*\lambda_v.
\label{eq:borel-tree-pushforward}
\end{equation}
If $v,w$ are adjacent in $T$, then \eqref{eq:borel-incidence-displacement}, \eqref{eq:borel-ambient-variation}, and contraction of the $\ell^1$-norm under pushforward give
\begin{equation}
\|\nu_v-\nu_w\|_1
\le \|\lambda_v-\lambda_w\|_1
<\frac14.
\label{eq:borel-tree-variation}
\end{equation}

Let $e=\{v,w\}$ be an edge of $T$. Removing $e$ leaves two components. Denote by $T_{v\to w}$ the component containing $w$ and put
\begin{equation}
f(v,e)=\nu_v(T_{v\to w}).
\label{eq:borel-half-tree-mass}
\end{equation}
The function $(v,e)\mapsto f(v,e)$ is Borel. Indeed, the relation
\[
u\in T_{v\to w}
\]
is Borel, since it means that $u$ and $w$ are joined by an $H$-path avoiding $e$. If the measures $\lambda_v$ in \eqref{eq:borel-ambient-variation} are supported in $B_R^{\whG}(v)$, then
\begin{equation}
f(v,e)=\sum_{z\in B_R^{\whG}(v)}
\lambda_v(z)\,
\mathbf 1_{\{r_T(z)\in T_{v\to w}\}}.
\label{eq:borel-half-tree-borel}
\end{equation}
The balls in this sum are finite with a uniform cardinality bound, so the right-hand side is Borel.

If $e_1,e_2,e_3$ are the three edges incident to $v$, the sets $T_{v\to w_i}$ are pairwise disjoint, and therefore
\begin{equation}
\sum_{i=1}^3 f(v,e_i)\le1.
\label{eq:borel-three-sector-sum}
\end{equation}
Hence
\begin{equation}
\min_{e\ni v}f(v,e)\le\frac13.
\label{eq:borel-three-sector-min}
\end{equation}
For an edge $e=\{v,w\}$ the two half-trees are complementary, so
\begin{equation}
\begin{aligned}
f(v,e)+f(w,e)
&=\nu_v(T_{v\to w})+\nu_w(T_{w\to v})\\
&\ge 1-\|\nu_v-\nu_w\|_1\\
&>\frac34.
\end{aligned}
\label{eq:borel-opposite-sector-sum}
\end{equation}
In particular the same edge cannot satisfy $f(v,e)\le1/3$ at both endpoints.

At every $v\in T$, choose the least incident edge, in the fixed Borel order, among those on which $f(v,e)$ is minimal. By \eqref{eq:borel-three-sector-min}--\eqref{eq:borel-opposite-sector-sum}, no edge is chosen at both endpoints. This completes the required edge selection at every degree-three vertex of $H$. The map $I$ defined above is therefore the required injection $A\to B$. By construction it sends $a$ to the underlying $B$-coordinate of one of $J(a,1),J(a,2),J(a,3)$.
\end{proof}

\subsection{Finite-stage augmentation}\label{subsec:borel-augmentation}

The following finite-stage matching lemma is the Borel form of the short-augmenting-path elimination mechanism from Glasner--Liu \cite[Section~3]{GlasnerLiu}.

Let $L$ and $R$ be standard Borel spaces, serving as the left and right vertex spaces. A finite-chart bipartite multigraph between $L$ and $R$ is given by finitely many Borel partial bijections
\[
\phi_i:D_i\longrightarrow R_i,
\qquad 1\le i\le d.
\]
The edge $(i,x)$ joins $x\in D_i$ to $\phi_i(x)$. Different values of $i$ may give parallel edges. A matching is a Borel set of edges with no repeated endpoint. We write $\dom M\subseteq L$ and $\ran M\subseteq R$ for the matched vertices; on $\ran M$, the notation $M^{-1}(y)$ means the unique left vertex matched to $y$.

An augmenting path for $M$ is a finite path
\[
x_0,y_0,x_1,y_1,\ldots,x_k,y_k,
\]
where $x_i\in L$, $y_i\in R$, the endpoints $x_0$ and $y_k$ are unmatched, the edges $x_i y_i$ do not belong to $M$, and
\[
M(x_{i+1})=y_i\qquad(0\le i<k).
\]
Its length is $2k+1$.

\begin{lemma}[Finite-stage augmentation]\label{lem:borel-augmentation}
Let $H$ be a finite-chart bipartite multigraph with left vertex space $L$ and right vertex space $R$, and let $M_0$ be a Borel matching in $H$. For every $q\ge1$ there is a Borel matching $M_q$ with no augmenting path of length at most $2q-1$. Every vertex matched by $M_0$ can be kept matched throughout the construction.
\end{lemma}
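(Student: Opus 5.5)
We would follow the Elek--Lyons / Nguyen standard scheme: proceed by induction on $k$, producing Borel matchings $M_0\subseteq$ (in terms of matched vertices) $M_1,M_2,\ldots$ where $M_k$ has no augmenting path of length at most $2k-1$. Given $M_{k}$, we eliminate augmenting paths of length exactly $2k+1$ (shorter ones do not exist), and check that no new augmenting paths of length at most $2k+1$ are created. Set $M_q$ to be the final stage.

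The key step is the passage $M_k\to M_{k+1}$. First, the set $\mathcal A_k$ of augmenting paths of length $2k+1$ for $M_k$ is a standard Borel space: a path is coded by its starting point $x_0$ and the finite sequence of chart indices used, so $\mathcal A_k$ is a Borel subset of $L\times[d]^{k+1}$ (membership of matched/unmatched, and $M_k$-edges, is Borel since $M_k$ is Borel and the charts are Borel partial bijections). Define the intersection graph on $\mathcal A_k$ joining two paths sharing a vertex. Since $H$ is locally finite with degree at most $d$ (on both sides, as each $\phi_i$ is a partial bijection), this graph has bounded degree, so by Kechris--Solecki--Todorcevic \cite{KST} it admits a Borel colouring with finitely many colours $1,\dots,N$. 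We then flip sequentially: for $c=1,\dots,N$, take all currently still-augmenting paths of length $2k+1$ of colour $c$ — these are pairwise vertex-disjoint — and replace the matching along each by its symmetric difference. Each step is Borel, and a flip along $x_0,y_0,\ldots,x_k,y_k$ keeps every previously matched vertex matched and adds $x_0,y_k$, which gives the final sentence of the lemma.

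The main point to verify, and the step I expect to require care, is that flipping does not produce augmenting paths of length $\le 2k+1$, and that after all $N$ colours no augmenting path of length $2k+1$ survives. For the first, we use the classical Hopcroft--Karp fact: if $M$ has no augmenting path shorter than $\ell$ and $P$ is a shortest augmenting path (length $\ell$), then $M\triangle P$ has no augmenting path shorter than $\ell$, and any augmenting path of length $\ell$ for $M\triangle P$ is vertex-disjoint from $P$ (indeed it would be an augmenting path for $M$ of length $\ell$ disjoint from $P$; otherwise the symmetric difference argument yields $|P'|\ge \ell+2$). Since all augmenting paths of length $2k+1$ created after a flip are already augmenting paths for the old matching avoiding flipped vertices, every length-$(2k+1)$ augmenting path at any later moment was in $\mathcal A_k$ from the start; it has some colour $c$, and at stage $c$ it is either flipped or already destroyed (meets a flipped path), and it cannot revive afterwards. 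Applying this locally (the arguments are about finitely many disjoint paths in a finite neighbourhood, so the infinitely many simultaneous disjoint flips cause no trouble) completes the induction.
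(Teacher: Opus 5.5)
Your argument is correct, but it takes a different route from the paper.

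\textbf{The paper's route.} The paper never colours an intersection graph. It introduces the directed distance $d_M$ from the unmatched left vertices and shows that every length-$\ell$ augmenting path satisfies $d_M(v_j)=j$. From this it deduces that two augmenting paths with the same chart-index string $(i_0,\ldots,i_k)$ are automatically vertex-disjoint. At equal positions this follows from injectivity of the charts and of $M^{-1}$; at different positions it follows from the layering. So the finitely many index strings already form a Borel proper colouring, and Kechris--Solecki--Todorcevic is not needed. The paper then takes a greedy maximal disjoint family string by string and flips it in a single step. It shows that no augmenting path of length $\le\ell$ survives using monotonicity of $d_M$ along $M'$-oriented edges. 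This handles infinitely many simultaneous flips directly, with no localisation.

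\textbf{Your route.} You take a KST colouring of the bounded-degree intersection graph, flip colour class by colour class, and iterate the classical finite Hopcroft--Karp lemma on the finitely many flipped paths meeting a candidate path. Your sequential procedure in fact yields a greedy maximal disjoint family. A path of $\mathcal A_k$ is still augmenting after a stage exactly when it avoids the paths flipped so far.

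\textbf{Details to state explicitly.}
\begin{itemize}
\item The bound $|P'|\ge\ell+2|E(P)\cap E(P')|$ only gives edge-disjointness. Vertex-disjointness needs one more line: every vertex of $P$ is matched in $M\triangle P$ by an edge of $P$, and an augmenting path through that vertex would be forced to use that edge.
\item The locality reduction needs stating: whether $P'$ is augmenting is changed only by flipped paths that meet $P'$, and there are finitely many of these.
\end{itemize}

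\textbf{Comparison.} Your approach is more modular: it uses only the black-box finite lemma and a general colouring theorem. The paper's approach defines every piece by finitely many chart strings. That is why it transfers verbatim to the clopen setting of Lemma~\ref{lem:clopen-augmentation}. There, your approach would need a clopen colouring of a suitably presented clopen intersection graph on path codes instead of KST.
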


\begin{proof}
Suppose that the current matching $M$ has no augmenting path of length smaller than
\[
\ell=2k+1.
\]
We remove all augmenting paths of length $\ell$ in one finite Borel stage.

Orient every edge of $H$ outside $M$ from $L$ to $R$ and every edge of $M$ from $R$ to $L$, and denote the resulting directed graph by $\vec H_M$. Let
\[
d_M(v)=\dist_{\vec H_M}(L\setminus\dom M,v)
\]
be the directed distance from the unmatched left vertices. If
\[
P=(v_0,v_1,\ldots,v_\ell)
\]
is an $M$-augmenting path of length $\ell$, then
\begin{equation}\label{eq:borel-aug-distance}
d_M(v_j)=j
\qquad(0\le j\le\ell).
\end{equation}
Indeed, the path itself gives $d_M(v_j)\le j$. If $d_M(v_j)<j$, a directed path of length $<j$ from an unmatched left vertex to $v_j$, followed by the tail $(v_j,\ldots,v_\ell)$, gives a directed walk from an unmatched left vertex to an unmatched right vertex of length $<\ell$. A shortest such walk is an augmenting path, contrary to the assumption on $M$.

Recall that the finite-chart structure of $H$ is given by the finitely many Borel partial bijections
\[
\phi_1,\ldots,\phi_d.
\]
An augmenting path of length $\ell=2k+1$ is determined by its initial vertex together with the index sequence
\begin{equation}\label{eq:borel-aug-string}
(i_0,\ldots,i_k),
\qquad i_j\in\{1,\ldots,d\}.
\end{equation}
where $i_j$ is the index of the chart used by the $j$-th edge of the path directed from $L$ to $R$. Indeed, once the initial vertex and this index sequence are fixed, the intervening edges directed from $R$ to $L$ are determined by $M$. There are only finitely many index sequences \eqref{eq:borel-aug-string}. For a fixed index sequence, the set of initial vertices for which the resulting composition is defined and gives an augmenting path is Borel.

Two augmenting paths with the same index sequence \eqref{eq:borel-aug-string} and different initial vertices are vertex-disjoint. Indeed, if they meet in the same position, injectivity of the partial bijections $\phi_i$ and of $M^{-1}$ forces their initial vertices to be equal. If they meet in different positions, \eqref{eq:borel-aug-distance} gives two different values for $d_M$ at the same vertex.

Order the finitely many index sequences \eqref{eq:borel-aug-string}. For the first index sequence keep all its augmenting paths. For each later index sequence keep exactly those paths which avoid all paths already kept. The resulting Borel family $\mathcal Q$ is a maximal pairwise vertex-disjoint family of $M$-augmenting paths of length $\ell$. Indeed, every $M$-augmenting path of length $\ell$ which is not kept meets a path retained at an earlier or equal stage.

Flip $M$ simultaneously along all paths in $\mathcal Q$, and call the new matching $M'$. The two endpoints of every selected path become matched, and every internal vertex remains matched. Thus no vertex matched by $M$ becomes unmatched.

\medskip
\noindent
\textbf{Sublemma.}
The matching $M'$ has no augmenting path of length at most $\ell$.

\smallskip
\noindent
\emph{Proof.}
Suppose
\[
Q=(w_0,\ldots,w_m),\qquad m\le\ell,
\]
were such a path. Its endpoints are unmatched for $M'$, hence they were also unmatched for $M$. In particular,
\[
d_M(w_0)=0,
\qquad
d_M(w_m)\ge\ell,
\]
Indeed, if $d_M(w_m)<\ell$, there would be an $M$-directed path of length $<\ell$ from an unmatched left vertex to the unmatched right vertex $w_m$. Such a path is itself an $M$-augmenting path, contradicting the assumption on $M$.

Along each directed edge of $Q$, oriented according to $M'$, the old distance $d_M$ increases by at most one. Indeed, if the matching status of that edge was not changed by the flip, then its $M'$-orientation agrees with its $M$-orientation, so the definition of directed distance in $\vec H_M$ gives an increase of at most one. If the matching status of that edge was changed by the flip, then its $M'$-orientation is opposite to the direction in which the selected augmenting path traversed it, and by \eqref{eq:borel-aug-distance} the old distance $d_M$ decreases by one. Hence the endpoint distance bounds above imply
\begin{equation}\label{eq:borel-aug-newdist}
m=\ell,
\qquad
d_M(w_j)=j\quad(0\le j\le\ell).
\end{equation}
and $Q$ uses no edge whose status was changed.

Moreover, $Q$ cannot meet any of the selected augmenting paths. Indeed, every vertex of a selected path is matched by $M'$. If $Q$ passed through such a vertex, then, since it is an internal vertex of the augmenting path $Q$, $Q$ would have to use the unique $M'$-matching edge incident to it. That edge belongs to the selected path and has changed its matching status, contradicting \eqref{eq:borel-aug-newdist}. Thus $Q$ is disjoint from $\mathcal Q$.

Since $Q$ uses no changed edge, it was already an $M$-augmenting path of length $\ell$. This contradicts maximality of $\mathcal Q$. \hfill$\square$

\medskip
Thus all length-$\ell$ augmenting paths have been removed without creating any augmenting path of length at most $\ell$.

Starting with $M_0$, perform this construction for
\[
\ell=1,3,\ldots,2q-1.
\]
Only finitely many Borel operations are used, and previously matched vertices remain matched at every stage.
\end{proof}

\subsection{Uniform F\o lner comparison}

\begin{theorem}[Uniform F\o lner comparison]\label{thm:borel-comparison}
Let $A,B\subseteq X$ be Borel. Suppose that there are $\rho,\gamma>0$ such that every nonempty finite $\rho$-F\o lner set $F$ satisfies
\begin{equation}
|B\cap F|-|A\cap F|\ge\gamma |F|.
\label{eq:borel-surplus}
\end{equation}
If $G$ is uniformly Borel amenable and has the F\o lner property, then
\[
A\preceqG B.
\]
\end{theorem}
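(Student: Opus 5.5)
We will produce the injection in the form $3A\preceqG 2B$ and then apply Lemma~\ref{lem:borel-three-two}. The surplus \eqref{eq:borel-surplus} is too weak to give $A\preceqG B$ by a direct local argument. It does, however, give room for a tripled versus doubled count: replacing $A$ by $3A$ and $B$ by $2B$ multiplies the counts, so some reduction is needed first. The plan is to shrink $A$ before tripling. Using Packing input~II with $t\ll\gamma$, we obtain a $t$-F\o lner packing $\mathcal T$ whose tiles fill all but a $t$-fraction of every $\rho'$-F\o lner set. Inside each tile $T$ we have $|B\cap T|\ge|A\cap T|+\gamma|T|/2$, provided $t<\rho$ so that tiles are themselves $\rho$-F\o lner. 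Inside $T$ we can therefore match all of $A\cap T$ into $B\cap T$ and still retain $\gamma|T|/2$ unused $B$-points. This matching is done Borel-wise, since tiles are finite with bounded diameter and one can choose the lexicographically least matching.

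The points of $A$ that remain unmatched are $A'=A\setminus[\mathcal T]$. Let $B'$ be the unused $B$-points in tiles. Every $\rho'$-F\o lner set $F$ then satisfies $|A'\cap F|\le t|F|$ plus boundary corrections. It also satisfies $|B'\cap F|\ge(\gamma/2-O(t))|F|$, because $F$ is almost a union of tiles. With $t$ small this gives $|B'\cap F|\ge 10|A'\cap F|+c|F|$. The goal is now to show $3A'\preceqG 2B'$, and hence $A'\preceqG B'$ by Lemma~\ref{lem:borel-three-two}, with $A'$ disjoint from the earlier matching. The earlier in-tile injection and this one then combine to give $A\preceqG B$.

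To obtain $3A'\preceqG 2B'$, consider the finite-chart bipartite multigraph between $3A'$ and $2B'$ in which each copy of $a$ is joined to each copy of every $b\in B'$ with $d_G(a,b)\le r$. Here $r$ is chosen from the F\o lner property so that every $r$-ball contains a $\rho'$-F\o lner set. Start with the empty matching and apply Lemma~\ref{lem:borel-augmentation} with $q$ large. This yields a Borel matching with no augmenting path of length $\le 2q-1$. We must then show that it saturates $3A'$.

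\textbf{This is the main obstacle.} Suppose $x\in3A'$ is unmatched. Let $W_j$ be the set reachable from $x$ by alternating paths of length $\le 2j+1$. Since there are no short augmenting paths, every right vertex in $W_j$ is matched. Hence the right part of $W_j$ has size at most the left part minus one, and Hall's inequality fails on the explored region. We expect to show that the explored underlying sets grow exponentially in $j$ unless some ball-neighbourhood is nearly F\o lner. On a F\o lner set $F$ obtained from the F\o lner property inside the explored region, the surplus $|2B'\cap F|\ge 2\cdot10|A'\cap F|$ far exceeds $3|A'\cap F|$. This contradicts the deficiency, provided the $r$-neighbourhood structure ensures that all of $2B'\cap F$ is reached. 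Since uniform amenability bounds growth and prevents an exponential expansion over $q$ steps, the choice $q\gg\log(1/\rho')/\rho'$ should force the contradiction.

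Making this Hall-type isoperimetric argument rigorous is the hardest part. We would model it on Glasner--Liu, Section~4: alternate between "the explored set is F\o lner, so its surplus yields an augmenting path" and "the explored set is not F\o lner, so it expands by a factor $1+\rho'$". All displacements stay bounded by a multiple of $r$, so the resulting injections are $\preceqG$-maps.
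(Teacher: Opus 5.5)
Your reduction through Packing input~II is harmless but does not make the problem easier. $A'$ and $B'$ again satisfy a uniform F\o lner surplus, and deriving $3A'\preceq_G 2B'$ from such a surplus is a comparison statement of the same kind as the theorem itself. Indeed, the saturation argument you sketch would, if it worked, apply to $A'$ versus $B'$ directly, so Lemma~\ref{lem:borel-three-two} buys you nothing in this plan.

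Everything therefore rests on the step you flag, and there is a genuine gap there. A Borel matching with no augmenting path of length at most $2q-1$ has no reason to saturate $3A'$. Your expansion branch relies on the claim that uniform Borel amenability bounds growth, and it does not. Schreier graphs of free Borel actions of amenable groups of exponential growth, such as the lamplighter group, are uniformly Borel amenable, so an alternating tree that grows by a factor $1+\rho'$ at each of $q$ steps contradicts nothing. The other branch fails as well. The deficiency you obtain concerns only the explored sets. Nothing forces the explored right set to contain all of $2B'\cap F$ for some F\o lner set $F$: the explored left vertices lie in the sparse set $A'$ and need not come within distance $r$ of every $B'$-point of any F\o lner set. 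Moreover, the hypothesis gives no Hall condition for arbitrary finite sets of left vertices. At best this line could work under subexponential growth, which is exactly the case the theorem is meant to go beyond.

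The paper never tries to saturate. It first builds the matching graph differently. Choose $\ell$ with $\ell\gamma\ge1$, so that $(\ell+1)|A\cap P|\le \ell|B\cap P|$ on every tile $P$. Packing input~I, with its pointwise coverage by many F\o lner packings, then yields a bounded-displacement Borel injection $J:A\times[N]\to B\times[N]$, and the proof works in the finite-chart multigraph defined by $J$.

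Two estimates follow. First, for \emph{every} matching, the surplus makes the unmatched right set $V$ uniformly dense, which gives $X\preceq_G LV$. Second, take a matching $M$ with no augmenting path of length at most $2q-1$, and let $U$ be its unmatched left set. The alternating map $T=(M^{-1}\times\mathrm{id})\circ J$ is then defined for $q$ steps on $U\times[N]$, with pairwise disjoint images, which gives $(q+1)NU\preceq_G NA$.

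Now choose $2^h\ge NL$ and $(q+1)N\ge 3^h$. This gives $3^hU\preceq_G 2^hV$. Then $h$ applications of the three-to-two lemma yield $U\preceq_G V$, which is glued to $M$ to give $A\preceq_G B$.

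The global injection $J$ is what makes the iterates of $T$ defined and injective; your distance-$\le r$ graph has no analogue of it. The three-to-two lemma is used as a cancellation device between a tiny residual set and a dense one, not as a final step after a hard $3{:}2$ comparison.
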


\begin{proof}
The proof has four stages. First we use the uniform F\o lner surplus and Packing input I to construct a bounded-displacement injection between finite amplifications of $A$ and $B$. Second we show that, for every partial matching in the resulting finite-chart bipartite multigraph, the unmatched set on the $B$-side is uniformly dense. Third we choose a matching with no short augmenting paths and use the associated alternating map to produce many disjoint copies of the unmatched set on the $A$-side. Finally we combine the two estimates and repeatedly apply the three-to-two lemma.

\medskip
\noindent
\textit{From the F\o lner surplus to a bounded many-to-many injection.}

Choose an integer $\ell\ge1$ with
\begin{equation}
\ell\gamma\ge1.
\label{eq:borel-ell-choice}
\end{equation}
If $F$ is $\rho$-F\o lner, then
\[
\begin{aligned}
\ell|B\cap F|-(\ell+1)|A\cap F|
&=\ell\bigl(|B\cap F|-|A\cap F|\bigr)-|A\cap F|\\
&\ge \ell\gamma |F|-|A\cap F|\\
&\ge0.
\end{aligned}
\]
Thus
\begin{equation}
(\ell+1)|A\cap F|\le \ell|B\cap F|.
\label{eq:borel-tile-surplus}
\end{equation}

Choose
\begin{equation}
0<\theta<\min\left\{\rho,\frac1{\ell+1}\right\}
\label{eq:borel-theta-choice}
\end{equation}
and apply Packing input I. Let $\mathcal P_1,\ldots,\mathcal P_m$ be the resulting packings. Since every tile is $\theta$-F\o lner, it is $\rho$-F\o lner, and \eqref{eq:borel-tile-surplus} applies to every tile. Hence, for $P\in\mathcal P_i$, there is an injection
\begin{equation}
\eta_{i,P}:(A\cap P)\times\{1,\ldots,\ell+1\}
\longrightarrow
(B\cap P)\times\{1,\ldots,\ell\}.
\label{eq:borel-tile-injection}
\end{equation}
The injections in \eqref{eq:borel-tile-injection} may be chosen in a Borel way. Fix a Borel order on $X$. On each tile $P$, list $A\cap P$ and $B\cap P$ in this order and take the lexicographically first injection satisfying \eqref{eq:borel-tile-injection}. Since the tiles have uniformly bounded cardinality, this rule is Borel.

Let
\[
S=\{(a,i,r):a\in A\cap[\mathcal P_i],\ 1\le r\le\ell+1\}
\]
and
\[
R=\{(b,i,s):b\in B\cap[\mathcal P_i],\ 1\le s\le\ell\}.
\]
The maps \eqref{eq:borel-tile-injection} combine to a Borel injection $\eta:S\to R$. Put
\begin{equation}
N=\ell m.
\label{eq:borel-amplification-size}
\end{equation}
For every $a\in A$, \eqref{eq:borel-theta-choice} and Packing input I give
\begin{equation}
\bigl|\{(i,r):(a,i,r)\in S\}\bigr|
>(\ell+1)(1-\theta)m
>\ell m=N.
\label{eq:borel-source-label-count}
\end{equation}
Choose the first $N$ pairs $(i,r)$ in a fixed order and identify them with $1,\ldots,N$. Also identify the $\ell m=N$ target pairs $(i,s)$ with $1,\ldots,N$. Restricting $\eta$ to the chosen source pairs gives a Borel injection
\begin{equation}
J:A\times\{1,\ldots,N\}
\longrightarrow
B\times\{1,\ldots,N\}.
\label{eq:borel-many-to-many}
\end{equation}
There is $D<\infty$ such that
\begin{equation}
J(a,k)=(b,j)\quad\Longrightarrow\quad d_G(a,b)\le D,
\label{eq:borel-many-to-many-displacement}
\end{equation}
because every map in \eqref{eq:borel-tile-injection} stays inside one tile and the tiles have a common diameter bound.

For $1\le k,j\le N$, put
\[
D_{k,j}=\{a\in A:J(a,k)\in B\times\{j\}\}
\]
and define
\begin{equation}
\phi_{k,j}:D_{k,j}\longrightarrow B,
\qquad
\phi_{k,j}(a)=b
\quad\Longleftrightarrow\quad
J(a,k)=(b,j).
\label{eq:borel-matching-charts}
\end{equation}
Since $J$ is injective, each $\phi_{k,j}$ is a Borel partial injection. We regard the finitely many maps \eqref{eq:borel-matching-charts} as the edge charts of a bipartite multigraph $\mathcal H$ with left side $A$ and right side $B$. Parallel edges are allowed: different pairs $(k,j)$ may join the same two vertices.

\medskip
\noindent
\textit{Uniform density of the unmatched $B$-set.}

We first show that for \emph{every} partial matching $M$ in $\mathcal H$, the unmatched set on the $B$-side is uniformly dense, with constants independent of $M$.

Let $M$ be any partial matching in $\mathcal H$, and put
\begin{equation}
V=B\setminus\ran M.
\label{eq:borel-unmatched-right}
\end{equation}
Let $F\subseteq X$ be an arbitrary nonempty finite $\rho$-F\o lner set. Define
\[
N_D(F)=\{x\in X:d_G(x,F)\le D\}
\]
and
\begin{equation}
c_D=\sup_{x\in X}|B_D(x)|<\infty.
\label{eq:borel-ball-size-D}
\end{equation}
Every $b\in\ran M\cap F$ is matched to a distinct point of $A\cap N_D(F)$. Therefore
\begin{equation}
|\ran M\cap F|
\le |A\cap F|+|N_D(F)\setminus F|.
\label{eq:borel-matched-count}
\end{equation}
Every point $y\in N_D(F)\setminus F$ lies in $B_D(x)$ for some $x\in\partial F$: take a path of length at most $D$ from $F$ to $y$ and let $x$ be its last vertex in $F$. Hence
\begin{equation}
|N_D(F)\setminus F|\le c_D|\partial F|.
\label{eq:borel-collar-count}
\end{equation}
If $F$ is $\rho$-F\o lner, \eqref{eq:borel-surplus}, \eqref{eq:borel-unmatched-right}, \eqref{eq:borel-matched-count}, and \eqref{eq:borel-collar-count} give
\begin{equation}
|V\cap F|
\ge \gamma |F|-c_D|\partial F|.
\label{eq:borel-right-surplus}
\end{equation}
Choose $0<\delta\le\rho$ so that
\begin{equation}
c_D\delta<\frac\gamma2.
\label{eq:borel-delta-choice}
\end{equation}
Then every nonempty finite $\delta$-F\o lner set meets $V$. By the F\o lner property there is $r<\infty$ such that every $B_r(x)$ contains such a set. Hence
\begin{equation}
d_G(x,V)\le r
\qquad(x\in X).
\label{eq:borel-right-density}
\end{equation}
The number $r$ depends only on \eqref{eq:borel-surplus} and the displacement bound $D$, and not on the matching $M$.

Fix a Borel linear order $<$ on $X$ and define
\begin{equation}
\varphi(x)=\min_{<}\{v\in V:d_G(x,v)=d_G(x,V)\}.
\label{eq:borel-nearest-unmatched}
\end{equation}
The map $\varphi$ is Borel: by \eqref{eq:borel-right-density}, one only has to inspect the finite ball $B_r(x)$, first finding the least distance at which it meets $V$ and then taking the $<$-least point at that distance. Thus $d_G(x,\varphi(x))\le r$. Put
\begin{equation}
L=\sup_{x\in X}|B_r(x)|.
\label{eq:borel-fibre-bound}
\end{equation}
Every fibre of $\varphi$ has cardinality at most $L$. If
\[
\iota(x)=1+\bigl|\{y:\varphi(y)=\varphi(x),\ y<x\}\bigr|,
\]
then
\[
x\longmapsto (\varphi(x),\iota(x))
\]
is a Borel injection, and therefore
\begin{equation}
X\preceqG LV.
\label{eq:borel-space-into-right}
\end{equation}

\medskip
\noindent
\textit{A matching with no short augmenting paths.}

We now choose a particular matching and exploit the absence of short augmenting paths. The goal of this stage is to show that, if $U\subseteq A$ is the unmatched set on the left and $V\subseteq B$ is the unmatched set on the right, then sufficiently many labelled copies of $U$ can be injected back into labelled copies of $A$.

Fix $q\ge1$ for the moment. By Lemma~\ref{lem:borel-augmentation}, starting from the empty matching, after finitely many Borel augmentation stages we obtain a Borel matching $M$ in $\mathcal H$ with no augmenting path of length at most $2q-1$. Put
\begin{equation}
U=A\setminus\dom M,
\qquad
V=B\setminus\ran M.
\label{eq:borel-unmatched-sides}
\end{equation}
The estimate \eqref{eq:borel-right-density}, and hence \eqref{eq:borel-space-into-right}, applies to this $V$.

Define the partial map
\begin{equation}
T=(M^{-1}\times\id_{\{1,\ldots,N\}})\circ J
\label{eq:borel-alternating-map}
\end{equation}
on $A\times\{1,\ldots,N\}$. Thus if
\[
J(a,i)=(b,j)
\]
and $b$ is matched by $M$ to $a'$, then
\begin{equation}
T(a,i)=(a',j).
\label{eq:borel-alternating-map-value}
\end{equation}
Both $J$ and $M^{-1}\times\id$ are injective, so $T$ is injective on its domain.

\medskip
Here $T^0$ denotes the identity map on $A\times[N]$, and $\pi_A(a,i)=a$.

\noindent
\textbf{Sublemma.}
For every $0\le k\le q$, the iterate $T^k$ is defined on all of $U\times[N]$. Moreover, the sets
\[
T^0(U\times[N]),\ T^1(U\times[N]),\ \ldots,\ T^q(U\times[N])
\]
are pairwise disjoint, and for every $(u,i)\in U\times[N]$,
\begin{equation}
d_G\bigl(u,\pi_A(T^k(u,i))\bigr)\le 2kD.
\label{eq:borel-iterate-displacement}
\end{equation}

\smallskip
\noindent
\emph{Proof.}
We first prove that all the iterates are defined. Suppose otherwise, and let $k\le q$ be the first index for which an application of $T$ fails. Write
\[
T^{r}(u,i_0)=(a_r,i_r),
\qquad 0\le r\le k-1.
\]
For $1\le r\le k-1$, there is a vertex $b_r\in B$ such that
\begin{equation}
J(a_{r-1},i_{r-1})=(b_r,i_r),
\qquad
M(a_r)=b_r.
\label{eq:borel-alternating-data}
\end{equation}
At the failed step,
\[
J(a_{k-1},i_{k-1})=(b_k,i_k)
\]
for an unmatched vertex $b_k\in B$. Hence
\begin{equation}
(a_0=u),b_1,a_1,b_2,\ldots,a_{k-1},b_k
\label{eq:borel-alternating-walk}
\end{equation}
is a walk in $\mathcal H$. For each $r<k$, the edge from $b_r$ to $a_r$ belongs to $M$. If the preceding $J$-edge from $a_{r-1}$ to $b_r$ also belongs to $M$, then the matching property forces it to be the same edge, so the two consecutive steps form an immediate backtrack $a_{r-1},b_r,a_{r-1}$. Delete every such two-edge backtrack. In the remaining walk every left-to-right edge lies outside $M$ and every right-to-left edge lies in $M$. The last $J$-edge cannot belong to $M$, since $b_k$ is unmatched. We therefore obtain a directed alternating walk from the unmatched left vertex $u$ to the unmatched right vertex $b_k$, of length at most
\[
2k-1\le2q-1.
\]
A shortest directed walk with these endpoints is an augmenting path, contradicting the choice of $M$. Hence all the iterates are defined.

They are pairwise disjoint. If $0\le i<j\le q$ and
\[
T^i(u,s)=T^j(u',s'),
\]
then injectivity of $T^i$ gives
\[
(u,s)=T^{j-i}(u',s').
\]
The right-hand side belongs to $\ran T\subseteq\dom M\times[N]$, whereas the left-hand side belongs to $U\times[N]$, a contradiction.

Finally, each edge of $\mathcal H$ has $G$-displacement at most $D$ by \eqref{eq:borel-many-to-many-displacement}. One application of $T$ traverses one $J$-edge and one matching edge backwards, so the underlying left vertex after one application is at $G$-distance at most $2D$ from the preceding one. After $k$ iterations the displacement is at most $2kD$, proving \eqref{eq:borel-iterate-displacement}. \hfill$\square$

\medskip
The sublemma shows that the disjoint maps $T^0,\ldots,T^q$ give a bounded-displacement Borel injection
\begin{equation}
(q+1)NU\preceqG NA.
\label{eq:borel-disjoint-iterates}
\end{equation}

\medskip
\noindent
\textit{Compression by the three-to-two lemma.}

We now have the two estimates needed to compare the unmatched sets: \eqref{eq:borel-space-into-right} says that the whole space can be injected into finitely many copies of $V$, while \eqref{eq:borel-disjoint-iterates} says that sufficiently many copies of $U$ fit inside finitely many copies of $A$. We choose the amplification constants so that repeated three-to-two compression turns these two estimates into a single injection $U\to V$.

Choose $h$ so that
\begin{equation}
2^h\ge NL,
\label{eq:borel-h-choice}
\end{equation}
and then choose $q$ so that
\begin{equation}
(q+1)N\ge3^h.
\label{eq:borel-q-choice}
\end{equation}
Take the matching $M$ above for this value of $q$. From \eqref{eq:borel-space-into-right}, \eqref{eq:borel-disjoint-iterates}, \eqref{eq:borel-h-choice}, and \eqref{eq:borel-q-choice},
\begin{equation}
3^hU
\preceqG (q+1)NU
\preceqG NA
\preceqG NX
\preceqG NLV
\preceqG 2^hV.
\label{eq:borel-compression-chain}
\end{equation}
Finite amplification does not affect uniform Borel amenability, so the Borel three-to-two lemma applies without change to finite labelled copies of $X$.

Starting from the first and last terms of \eqref{eq:borel-compression-chain},
\[
3^hU\preceqG 2^hV,
\]
apply the Borel three-to-two lemma with
\[
A_1=3^{h-1}U,
\qquad
B_1=2^{h-1}V.
\]
Since $3A_1=3^hU$ and $2B_1=2^hV$, we obtain
\[
3^{h-1}U\preceqG 2^{h-1}V.
\]
Apply the same lemma again with
\[
A_2=3^{h-2}U,
\qquad
B_2=2^{h-2}V,
\]
and continue in the same way. After $h$ applications we have
\begin{equation}
3^hU\preceqG2^hV
\;\Longrightarrow\;
3^{h-1}U\preceqG2^{h-1}V
\;\Longrightarrow\;\cdots\;\Longrightarrow\;
3U\preceqG2V
\;\Longrightarrow\;
U\preceqG V.
\label{eq:borel-compression-iteration}
\end{equation}
The matching $M$ gives an injection
\[
A\setminus U\longrightarrow B\setminus V,
\]
and \eqref{eq:borel-compression-iteration} gives an injection $U\to V$. Their union is a bounded-displacement Borel injection $A\to B$.
\end{proof}

\subsection{Completion of the tiling}

\begin{proof}[Proof of Theorem~\ref{thm:borel-main}]
It is enough to consider $0<\varepsilon<1$. Put
\begin{equation}
t=\frac\varepsilon{10},
\qquad
N_0=\left\lceil\frac1t\right\rceil.
\label{eq:borel-absorption-parameters}
\end{equation}
Every connected component with at most $N_0$ vertices is taken as one part of the final partition. Remove these components and work on the invariant Borel set consisting of the remaining components.

Apply Packing input II with parameter $t$. Let $\mathcal T$ be the resulting packing and let $\rho>0$ be given by \eqref{eq:borel-packing-II}. Put
\begin{equation}
A=X\setminus[\mathcal T].
\label{eq:borel-uncovered}
\end{equation}
Every $T\in\mathcal T$ satisfies
\begin{equation}
|T|>\frac1t.
\label{eq:borel-large-tiles}
\end{equation}
Indeed, if $\partial T\ne\varnothing$, then
\[
1\le|\partial T|<t|T|.
\]
If $\partial T=\varnothing$, then $T$ is a whole connected component, and all components of size at most $N_0$ have already been removed.

Since $t<1/10$, \eqref{eq:borel-large-tiles} gives
\begin{equation}
\lceil3t|T|\rceil\le3t|T|+1<4t|T|<|T|.
\label{eq:borel-reserve-rounding}
\end{equation}
Thus every tile contains at least $\lceil3t|T|\rceil$ points. Let $B_T$ be the first
\[
\lceil 3t|T|\rceil
\]
points of $T$ in a fixed Borel order, and put
\begin{equation}
B=\bigcup_{T\in\mathcal T}B_T.
\label{eq:borel-reserve}
\end{equation}
By \eqref{eq:borel-large-tiles},
\begin{equation}
3t|T|\le |B_T|\le 3t|T|+1\le4t|T|.
\label{eq:borel-reserve-bounds}
\end{equation}

Let $F$ be a nonempty finite $\rho$-F\o lner set. By \eqref{eq:borel-packing-II},
\begin{equation}
\left|\bigcup\{T\in\mathcal T:T\subseteq F\}\right|\ge(1-t)|F|.
\label{eq:borel-contained-tile-mass}
\end{equation}
Since every point of $A\cap F$ is outside the union on the left,
\begin{equation}
|A\cap F|\le t|F|.
\label{eq:borel-uncovered-count}
\end{equation}
Using the lower bound in \eqref{eq:borel-reserve-bounds} on every tile contained in $F$ gives
\begin{equation}
|B\cap F|\ge 3t(1-t)|F|.
\label{eq:borel-reserve-count}
\end{equation}
Therefore
\begin{equation}
|B\cap F|-|A\cap F|
\ge (2t-3t^2)|F|.
\label{eq:borel-absorption-surplus}
\end{equation}
By the Uniform F\o lner comparison theorem, there is a bounded-displacement Borel injection
\begin{equation}
\psi:A\longrightarrow B.
\label{eq:borel-absorption-injection}
\end{equation}
Let $s$ be a displacement bound for $\psi$.

For $T\in\mathcal T$, define
\begin{equation}
T'=T\cup\psi^{-1}(B_T).
\label{eq:borel-enlarged-tiles}
\end{equation}
Since the sets $B_T$ partition $B$ and $\psi$ is injective, the sets $T'$ form a Borel partition of the remaining space. By \eqref{eq:borel-reserve-bounds},
\begin{equation}
|T'\setminus T|\le |B_T|\le4t|T|.
\label{eq:borel-enlargement-size}
\end{equation}
Moreover,
\[
\partial T'\subseteq \partial T\cup(T'\setminus T),
\]
so
\begin{equation}
|\partial T'|
<t|T|+4t|T|
=5t|T|
\le5t|T'|
<\varepsilon|T'|.
\label{eq:borel-enlargement-boundary}
\end{equation}
If the tiles in $\mathcal T$ have diameter at most $k$, every point of $T'\setminus T$ is within $G$-distance $s$ of a point of $B_T\subseteq T$. Hence
\begin{equation}
\operatorname{diam}_G(T')\le k+2s.
\label{eq:borel-enlargement-diameter}
\end{equation}
Reinserting the components removed at the beginning gives the required partition of $X$.
\end{proof}

\section{The clopen case}\label{sec:clopen}

In this section we work with the stable orbit structure $\Gamma\curvearrowright X$ fixed in Section~2, and we use throughout the terminology and notation of the Preliminaries. In particular, clopen orbit maps, bounded-degree clopen graphs, finite amplifications, clopen packings and tilings, invariant measures $\M$, topological amenability, the relation $A\preceqcl B$, the inner boundary $\partial_KF$, and the notions of almost finiteness and strong almost finiteness are all understood in the sense of Section~2. We shall use repeatedly the tower description of clopen packings, including the terminology of bases and levels introduced there. We shall state separately when minimality or the existence of an invariant probability measure is needed. As in the Preliminaries, the points of each tile are obtained from a common base point by clopen orbit maps and therefore lie in one full $\Gamma$-orbit, while their boundary is always measured in the specified bounded-degree clopen graph.

For later use, if $H$ is a bounded-degree clopen graph and $s\geq1$, we write $H^{[s]}$ for the graph joining two distinct points whenever their $H$-distance is at most $s$.

We shall also use the following elementary consequence of the clopen graph structure. A \emph{finite clopen proper colouring} of a bounded-degree clopen graph is a finite clopen partition of its vertex space such that no part contains both endpoints of an edge. This is the clopen counterpart of the finite Borel colouring theorem of Kechris--Solecki--Todorcevic \cite{KST}. Such a colouring always exists. Indeed, the edge relation is compact and disjoint from the diagonal, so every point has a clopen neighbourhood containing no edge. Compactness gives a finite cover by such sets, and a disjoint clopen refinement gives the required colouring.

We record one consequence of invariance that will be used repeatedly. Let $\mathcal E$ be a clopen tiling and let $A\subseteq X$ be clopen. Then, for every $\mu\in\M$,
\begin{equation}\label{eq:tower-average}
 \mu(A)=\int_X\frac{|A\cap\mathcal E(x)|}{|\mathcal E(x)|}\,d\mu(x).
\end{equation}
To see this, work tower by tower. All levels of one tower have the same measure, since the level maps are clopen orbit maps. Averaging the indicator of $A$ over the points of each tile and then integrating over the tower therefore gives exactly the measure of $A$ on that tower. Summing over the finitely many towers proves the identity. In particular,
\begin{equation}\label{eq:boundary-average}
 \mu(D_K(\mathcal E))
 =\int_X\frac{|\partial_K\mathcal E(x)|}{|\mathcal E(x)|}\,d\mu(x).
\end{equation}
More generally, if clopen sets $A,B\subseteq X$ satisfy $|A\cap T|\leq|B\cap T|$ in every tile $T$ of one clopen tiling, then $\mu(A)\leq\mu(B)$ for every $\mu\in\M$.

Write $\1_A$ for the indicator of a set $A$. The main result of this section is the following.

\begin{theorem}[The clopen case]\label{thm:clopen-main}
Suppose that $\Gamma\curvearrowright X$ is minimal, that its orbit relation is topologically amenable, and that $\M\ne\varnothing$. For every bounded-degree clopen graph $K$ and every $\tau,\eta>0$, there are clopen tilings $\mathcal E_1,\ldots,\mathcal E_m$ such that every tile of every $\mathcal E_i$ is $\tau$-F\o lner for $K$ and
\begin{equation}\label{eq:strong-conclusion}
 \frac1m\sum_{i=1}^m\1_{D_K(\mathcal E_i)}(x)<\eta
 \qquad(x\in X).
\end{equation}
Thus the orbit relation is strongly almost finite, and in particular almost finite.
\end{theorem}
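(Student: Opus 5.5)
The plan is to run the Borel argument of Section~\ref{sec:borel} in the clopen category and then to use the invariant measures to produce many tilings whose boundaries are spread out. There are three clopen ingredients to set up first.
\begin{enumerate}
\item A clopen form of Packing input I. From topological amenability in the rational form \eqref{eq:orbit-amenability}, the randomized F\o lner packing method of \cite{ElekTimarUBA,ElekTimarStrong} gives clopen packings $\mathcal P_1,\ldots,\mathcal P_m$. Each is $\theta$-F\o lner for $K$, and every point lies in $[\mathcal P_i]$ for more than $(1-\theta)m$ indices. All the Borel choices there (orders, colourings, nearest points) are replaced by finite clopen colourings, which exist by the remark above.
\item A clopen form of Packing input II. This is a large F\o lner packing $\mathcal T$ covering a $(1-t)$-fraction of every sufficiently F\o lner finite set.
\item A clopen three-to-two lemma and clopen finite-stage augmentation. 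Augmentation transfers verbatim: finitely many index sequences, clopen domains by stability. In the three-to-two lemma, the tree case uses pushforwards of continuous distributions, so $f(v,e)$ is locally constant and the minimal-edge choice is clopen. The leaf and cycle cases need more care, discussed below.
\end{enumerate}

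With these ingredients, the clopen uniform F\o lner comparison follows as in Theorem~\ref{thm:borel-comparison}: a local surplus $|B\cap F|-|A\cap F|\ge\gamma|F|$ on all $\rho$-F\o lner $F$ gives $A\preceqcl B$. The density step \eqref{eq:borel-right-density} uses the F\o lner property of the orbit structure together with minimality, which supplies a uniform finite $D$ with a F\o lner configuration in every $Dx$. Then, exactly as in the proof of Theorem~\ref{thm:borel-main}, we absorb the uncovered set $A=X\setminus[\mathcal T]$ into reserve sets $B_T$ of proportion about $3t$. This gives one clopen tiling $\mathcal E$ with $\tau$-F\o lner tiles, so almost finiteness already follows. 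The hypothesis $\M\neq\varnothing$ is what forces the surplus inequality to be meaningful and nondegenerate. In the absence of invariant measures, F\o lner sets need not exist in the required uniform sense.

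For strong almost finiteness, the task is to make the boundary sets $D_K(\mathcal E_i)$ average below $\eta$ pointwise, not only in measure. The plan is as follows.
\begin{enumerate}
\item Take the packings $\mathcal P_1,\ldots,\mathcal P_m$ of the clopen Packing input I with $\theta$ small, and with tiles so F\o lner that the $K$-boundary of each tile is a $\theta$-fraction.
\item Choose the randomization so that each point lies in $\partial_K$ of a tile of $\mathcal P_i$ for fewer than $\eta m/2$ indices $i$. The randomized construction in \cite{ElekTimarStrong} is built to give exactly this pointwise control on both coverage and boundary incidence.
\item Complete each $\mathcal P_i$ separately to a tiling $\mathcal E_i$ by the absorption argument. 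The points newly added to a tile may create new boundary only inside $T'\setminus T$ and near $\partial T$.
\item Arrange that, for each $x$, the indices $i$ with $x\notin[\mathcal P_i]$ or $x\in\partial\mathcal P_i$ number fewer than $\eta m$. Since absorbed points are exactly the uncovered ones, $x\in D_K(\mathcal E_i)$ forces either $x\notin[\mathcal P_i]$, or $x\in\partial\mathcal P_i$, or $x\in B_T$ with a $K$-neighbour absorbed from outside.
\item Control the last case by choosing the reserve $B_T$ inside the $K$-interior of $T$ at $K$-distance greater than the displacement bound from $\partial T$. A new neighbour of $x\in B_T$ is a $K$-neighbour of $x$, hence lies in $T$ already, because $x$ is interior. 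So it adds no boundary.
\end{enumerate}
With these choices, $D_K(\mathcal E_i)\subseteq (X\setminus[\mathcal P_i])\cup\partial\mathcal P_i\cup\psi_i^{-1}(B_i)$, where the last set consists of absorbed points, all outside $[\mathcal P_i]$. Hence \eqref{eq:strong-conclusion} follows from the pointwise coverage and boundary bounds.

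The main obstacle is the clopen three-to-two lemma. The Borel proof handles components with a leaf or with cycles by distance-to-a-set arguments, and those sets are closed but not clopen. Infinite saturations are not clopen either. I would try to avoid the case split entirely: apply the tree argument to the universal cover. The function $f(v,e)$ can be defined via the pushed-forward distributions on the local unfolding of $H$ at radius $R$, which is a finite clopen datum. A degree-one vertex simply absorbs an edge whose $f$ is small, and the inequalities \eqref{eq:borel-three-sector-min}--\eqref{eq:borel-opposite-sector-sum} remain valid because they only use the local tree structure within the support radius.
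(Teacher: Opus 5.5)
Your route has a genuine gap in the strong almost finiteness step. Your proposed repair of the three-to-two lemma also does not work.

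\textbf{Completing each $\mathcal P_i$ separately.} To absorb $X\setminus[\mathcal P_i]$ into reserves inside the tiles of $\mathcal P_i$, you need, for that fixed $i$, a uniform surplus. That is, $\mathcal P_i$ must cover all but an $O(t)$-fraction of every sufficiently F\o lner set, or in measure form $\mu([\mathcal P_i])>1-a$ for every $\mu\in\M$. Otherwise Hall's condition for a bounded-displacement injection into the reserves fails on a poorly covered F\o lner region.

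Packing input I gives nothing of this kind. It only controls coverage averaged over $i$, and it stays true if one of the $\mathcal P_i$ is replaced by the empty packing (when $\theta m>1$). More generally, the poorly covered regions can move with $i$, so no index need be uniformly good. This is exactly why the Borel proof needs the separately constructed Packing input II. The local replacement that produces such a large packing destroys the per-index pointwise control you use in steps 2--4. The pointwise boundary-incidence bound in step 2 is, in any case, only asserted.

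The paper avoids this by absorbing only once:
\begin{enumerate}
\item It first obtains a single clopen tiling $\mathcal T$ with $\tau$-F\o lner tiles (Proposition~\ref{prop:clopen-almost-finite}).
\item It takes $H\supseteq K$ in which every $\mathcal T$-tile is a clique. From amenability alone (Corollary~\ref{cor:separated-packings}, no F\o lner condition), it gets packings $\mathcal P_1,\ldots,\mathcal P_m$ whose distinct tiles are not joined by $H$-edges and which cover each point for more than $(1-\theta)m$ indices.
\item It replaces each $P\in\mathcal P_i$ by the union of the $\mathcal T$-tiles meeting it, and adds the untouched $\mathcal T$-tiles.
\end{enumerate}
The resulting tiles are disjoint unions of $\tau$-F\o lner tiles, hence $\tau$-F\o lner. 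A point $x$ is interior whenever $x$ and its $K$-neighbours are covered by $\mathcal P_i$. With $d$ bounding the degree of $K$, this fails for at most $(d+1)\theta m<\eta m$ indices.

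\textbf{The three-to-two lemma.} The inequality $f(v,e)+f(w,e)\ge 1-\|\nu_v-\nu_w\|_1$ needs two things: the two sides of $e$ must be complementary, and $\nu_v,\nu_w$ must be images of $\lambda_v,\lambda_w$ under one and the same map. On an unfolding, the lift of a point depends on the base vertex. As soon as a cycle lies within the support radius, the two sides are no longer complementary, and that is precisely the case you want to cover.

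For example, $H$ may well have finite cubic components isomorphic to the complete graph on four vertices. There the uniform distribution on the component is an exact amenability witness, and every $f(v,e)$ equals $1/4$. So the minimal-$f$ rule does not prevent an edge from being chosen at both ends. The universal cover is then the nonamenable $3$-regular tree, so no amenability argument on it can help.

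The paper handles the non-tree part by compactness instead:
\begin{enumerate}
\item The union $Z_3$ of the $3$-regular tree components is closed. The half-tree argument gives a selection clopen in $Z$, which is extended to a clopen partial matching $M_0$.
\item The compact set of left vertices left unmatched by $M_0$ lies in some $W_N$. Hence each carries a leaf-or-cycle certificate of size at most $r$, which yields an augmenting path of length at most $2r-1$.
\item Finite clopen augmentation (Lemma~\ref{lem:clopen-augmentation}) then matches all of $A$.
\end{enumerate}

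Finally, your clopen Packing inputs I and II are asserted rather than established. Their Borel versions rely on F\o lner sets in uniformly bounded $G$-balls, which is not available for $K$ here. The paper does not need them. It uses a packing that is large for every invariant measure (Lemma~\ref{lem:measure-replacement}, Proposition~\ref{prop:large-packing}) and obtains comparison abstractly: three-to-two gives weak comparability of the clopen type semigroup, which by Melleray's results gives almost unperforation and dynamical comparison.
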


The point of the formulation is that minimality refers to the full $\Gamma$-orbits; no connected component of a prescribed graph $K$ is assumed to be dense or to have a uniform F\o lner property in its own graph metric. The proof first constructs a clopen F\o lner packing whose union has measure close to one for every $\mu\in\M$. We then find a clopen orbit injection from its uncovered complement into a small reserved subset of the packing, and use this injection to enlarge the tiles to a tiling of $X$. The construction of finite families of tilings follows the method of Elek--Tim\'ar \cite[Lemma~2.2]{ElekTimarUBA}; the selection of one large packing uses their local replacement method \cite[Proposition~3.1]{ElekTimarUBA}. The required injection is obtained using the implication $3A\preceqcl2B\Rightarrow A\preceqcl B$, proved below following Glasner--Liu \cite[Sections~3--4]{GlasnerLiu}.

\subsection{Finite clopen partitions from amenability}

Let $H$ be a topologically amenable bounded-degree clopen graph and let $\alpha>0$. By the formulation of topological amenability in the Preliminaries, there are $R<\infty$, an integer $n\geq1$, and a continuous family of probabilities $p_x$ on the $H$-component of $x$ such that, for every $x\in X$,
\begin{equation}\label{eq:localized-kernel}
 \supp p_x\subseteq B_H(x,R),
 \qquad np_x(z)\in\{0,1,\ldots,n\}\quad(z\in X),
 \qquad \|p_x-p_y\|_1<\alpha\quad(x\sim_Hy).
\end{equation}
The common-denominator rational form is obtained by starting with a smaller error and approximating the finitely many continuous weights on a finite clopen partition by rational probability vectors with a common denominator. The support remains inside the same radius-$R$ $H$-ball. This is the localized amenability property recorded in the Preliminaries, based on \cite[Proposition~5.1.1]{ADR}; see also \cite[Lemma~3.3]{AustadBonicke}.

The Borel counterpart of the following proposition is \cite[Lemma~2.2]{ElekTimarUBA}.

\begin{proposition}[Finite random clopen tilings]\label{prop:random-tilings}
Let $H$ be a topologically amenable bounded-degree clopen graph. For every $\beta>0$, there are full clopen tilings $\mathcal E_1,\ldots,\mathcal E_q$ such that, for every $x\in X$,
\begin{equation}\label{eq:random-interior}
 \frac1q\sum_{j=1}^{q}\1_{D_H(\mathcal E_j)}(x)<\beta.
\end{equation}
The tilings have a common finite orbit bound in the sense of \eqref{eq:orbit-bounded-tiles} of the Preliminaries. No F\o lner condition on their tiles is asserted.
\end{proposition}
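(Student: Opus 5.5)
The plan is to follow the randomized construction of \cite[Lemma~2.2]{ElekTimarUBA} and carry it out in clopen form. The amenability kernel will serve as a family of ``random Voronoi cells''.

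Fix $\beta>0$ and apply \eqref{eq:localized-kernel} with a small $\alpha$, to be chosen in terms of $\beta$. This gives $R$, $n$ and a continuous rational kernel $p_x$, supported in $B_H(x,R)$, with weights in $\frac1n\mathbb Z$. Write each $p_x$ as a multiset of $n$ points of $B_H(x,R)$, that is, $n$ \emph{tokens} at $x$. Next, take a finite clopen proper colouring $c$ of the bounded-degree clopen graph $H^{[2R]}$, as in the paragraph on finite clopen proper colourings. Points of the same colour are then at $H$-distance more than $2R$, so their balls $B_H(\cdot,R)$ are disjoint. Let $\Sigma$ be the finite set of orderings $\sigma$ of the colour set, each combined with a choice of token index $t_k\in[n]$ for each colour $k$, and let $q=|\Sigma|$.

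For $\sigma\in\Sigma$ define the tiling $\mathcal E_\sigma$ as follows. Assign each point $z$ to the first colour class $k$ in the order $\sigma$ that contains a point $x$ whose $t_k$-th token equals $z$. Such an $x$ is unique, because same-coloured balls are disjoint. The tile of $x$ is the set of points so assigned. Points not assigned at all form singleton tiles. Equivalently, we may add a dummy colour handling each point as its own centre, so that the partition is full. Every membership test involves only the labelled $2R$-neighbourhood types and the finitely many clopen pieces on which the kernel is constant. Hence each tile-incidence relation is given by finitely many clopen orbit maps. Choosing the base of each tile to be its centre and refining clopenly, we obtain a tower description. All tiles lie in $B_H(x,R)$ for their centre $x$, which gives a common orbit bound $D\subseteq\langle S\rangle$ as in \eqref{eq:orbit-bounded-tiles}.

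The main step, and the expected obstacle, is estimate \eqref{eq:random-interior}. Fix an edge $x\sim_H y$. Under the uniform distribution on $\Sigma$, we need the probability that $x$ and $y$ lie in different tiles to be $O(\alpha)$, with constants depending only on the number of colours and the degree bound. The tile of $z$ is determined by the first colour class, in the random order, whose token hits $z$. The probability that $z$ is hit by class $k$ is $\sum_{w\in c^{-1}(k)}p_w(z)$, which is at most $1$ by the disjointness. Comparing the two points, the mass transfer between the kernels near $x$ and near $y$ is controlled by $\|p_w-p_{w'}\|_1$ over adjacent pairs. I expect the cleanest route to be the one in \cite{ElekTimarUBA}: use ordered colours with independent uniform tokens, and bound the probability of separation by a sum over centres $w$ within distance $R$ of the quantity $|p_w(x)-p_w(y)|$. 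Making this summable into $O(\alpha)$ is delicate. The standard cure is to take $\alpha$ tiny relative to the volume $|B_H(\cdot,2R)|$. This is not circular, because $R$ is fixed only after $\alpha$, so I would first use the amenability statement at scale $\alpha$ and then enlarge the kernel, for instance by averaging $p_x$ over $B_H(x,R)$. If a purely additive bound fails, the fallback is the Voronoi trick: assign $z$ to the centre minimising a clopen random priority, making the argument in terms of one edge $x\sim y$ directly.

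Finally, a point $x$ belongs to $D_H(\mathcal E_\sigma)$ exactly when some neighbour is separated from it. Taking a union bound over at most $d$ neighbours then gives $\frac1q\sum_\sigma\1_{D_H(\mathcal E_\sigma)}(x)\le d\cdot O(\alpha)<\beta$, once $\alpha$ is chosen small enough. This bound is uniform in $x$, as required.
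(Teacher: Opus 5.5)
There is a genuine gap in the direction of your construction. In $\mathcal E_\sigma$ the centres \emph{push} onto points: $z$ is assigned to $x$ only if $z$ is the $t_k$-th token of $x$. Since $t_k$ is a single index, each centre claims at most one point. So every tile is a singleton, $D_H(\mathcal E_\sigma)$ is the set of all non-isolated vertices for every $\sigma$, and \eqref{eq:random-interior} fails outright.

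Letting $x$ claim all of its tokens does not repair this. In any push scheme the separation probability of an edge $x\sim_H y$ is governed, as you note yourself, by $\sum_w|p_w(x)-p_w(y)|$, or by the boundaries of the sets $\supp p_w$. Topological amenability says nothing about these quantities. It controls $\|p_x-p_y\|_1=\sum_z|p_x(z)-p_y(z)|$, which is the variation in the other variable, and $p_w$ may be a point mass. Your repair is also circular. $R$ is produced by the amenability statement at scale $\alpha$, so $\alpha$ cannot afterwards be taken small relative to $|B_H(\cdot,2R)|$. Already on $\mathbb Z$ one needs $R$ of order $1/\alpha$, so $\alpha|B_H(\cdot,2R)|$ stays bounded below. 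Averaging $p_x$ over $B_H(x,R)$ does not change which variable is controlled.

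The missing idea is to pull instead of push, using a min-hash coupling: each point $x$ chooses one of its \emph{own} tokens, and the tiles are the fibres of this choice.
\begin{enumerate}
\item Write $Q_x$ for the set of $n$ tokens representing $p_x$.
\item Choose a finite clopen colouring $\chi$ such that, whenever $x=y$ or $x\sim_H y$, distinct points of $\supp p_x\cup\supp p_y$ get distinct colours.
\item Give the token $(z,k)$ the colour $(\chi(z),k)$.
\item For each ordering $\omega$ of the finite set of token colours, let $f_\omega(x)$ be the highest-coloured token of $Q_x$.
\end{enumerate}
Then $f_\omega(x)=f_\omega(y)$ defines a full clopen tiling. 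Its classes have at most $|D|$ points, where $D$ is finite with $B_H(x,R)\subseteq Dx$. For an edge $x\sim_H y$, all tokens of $Q_x\cup Q_y$ are distinctly coloured, so each is equally likely to be highest under a uniform $\omega$. Hence
\[
\Pr\{f_\omega(x)\ne f_\omega(y)\}=\frac{|Q_x\triangle Q_y|}{|Q_x\cup Q_y|}\le\|p_x-p_y\|_1<\alpha .
\]
A union bound over at most $d$ neighbours, with $\alpha<\beta/d$, gives \eqref{eq:random-interior} with no volume factor.

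Your ``Voronoi fallback'' points in this direction, but only if three things hold: the candidates are the point's own tokens, they are weighted by its own kernel, and the priority is one common random order under which tokens of adjacent points are distinguishable. As stated, the fallback specifies none of this, and that specification is the actual content of the proof.
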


\begin{proof}
Let $d\geq1$ bound the degree of $H$, and choose \eqref{eq:localized-kernel} with $\alpha<\beta/d$. Since $H$ is given by finitely many clopen orbit maps and $R$ is fixed, there is a finite set $D\subseteq\Gamma$ such that, for every $x\in X$,
\[
 B_H(x,R)\subseteq Dx.
\]
For each $x$, replace every support point $z$ of weight $m/n$ by $m$ labelled copies. Thus $p_x$ is represented by the $n$-element set
\[
 Q_x=\{(z,k):z\in\supp p_x,\ 1\leq k\leq np_x(z)\}.
\]
We call the elements of $Q_x$ tokens. Since $p_x$ is a probability measure, $Q_x$ contains $n$ tokens.

Choose a finite clopen colouring $\chi$ such that, whenever $x=y$ or $x\sim_Hy$, all distinct points occurring in $\supp p_x\cup\supp p_y$ have distinct colours. To obtain it, join distinct points $z=gx$ and $z'=hy$ whenever $g,h\in D$ and $x=y$ or $x\sim_Hy$. This is a bounded-degree clopen graph: its pairs are described by finitely many compositions of the maps in $D$ and the charts of $H$. Apply the finite clopen colouring observation above. Give $(z,k)$ the colour $(\chi(z),k)$. The tokens of $Q_x\cup Q_y$ now have distinct colours for adjacent $x,y$, and the same is true within each $Q_x$.

\begin{lemma}[Clopen tiling associated with an ordering]\label{lem:ordering-tiling}
For every ordering $\omega$ of the finite set of token colours, let $f_\omega(x)$ be the token of $Q_x$ whose colour is highest in $\omega$, and define
\[
 x\,E_\omega\,y\quad\Longleftrightarrow\quad f_\omega(x)=f_\omega(y).
\]
Then the equivalence classes of $E_\omega$ form a full clopen tiling $\mathcal E_\omega$. The finite set $D^{-1}D$ is a common orbit bound for all these tilings.
\end{lemma}

\begin{proof}
Fix $\omega$. For $g\in D$ and $1\leq k\leq n$, the set of points $x$ for which $f_\omega(x)=(gx,k)$ is clopen. Indeed, the condition $(gx,k)\in Q_x$ is $np_x(gx)\geq k$; the function $x\mapsto p_x(gx)$ is continuous and takes values in the finite set $\{0,1/n,\ldots,1\}$. Whether $(gx,k)$ has higher colour than every other token in $Q_x$ is decided by finitely many conditions of the same form together with the clopen colouring $\chi$. Thus $X$ has a finite clopen partition on each part of which $f_\omega$ has the form $x\mapsto(gx,k)$.

On two such parts, where $f_\omega(x)=(gx,k)$ and $f_\omega(y)=(hy,\ell)$, equality is possible only when $k=\ell$, and then it is equivalent to $gx=hy$, or $y=h^{-1}gx$. Hence $E_\omega$ is a finite union of graphs of clopen orbit maps and is clopen. Every $E_\omega$-class is finite: if $f_\omega(x)=(z,k)$, then $x=g^{-1}z$ for some $g\in D$, so the class has at most $|D|$ points. Since every $x\in X$ has a unique highest-ranked token, these finite classes cover $X$ and therefore form a full clopen tiling $\mathcal E_\omega$. Finally, if $x\,E_\omega\,y$, then $y=h^{-1}gx$ for some $g,h\in D$, so $D^{-1}D$ is a common orbit bound.
\end{proof}

Fix an $H$-edge $xy$, and choose $\omega$ uniformly from all orderings. Every token of $Q_x\cup Q_y$ is equally likely to have the highest colour. The two choices differ exactly when that highest token lies in the symmetric difference. Hence
\begin{equation}\label{eq:token-disagreement}
 \Pr\{f_\omega(x)\ne f_\omega(y)\}
 =\frac{|Q_x\mathbin\triangle Q_y|}{|Q_x\cup Q_y|}
 \leq\frac{n\|p_x-p_y\|_1}{n}<\alpha.
\end{equation}
The point $x$ is a boundary point of its fibre only if this disagreement occurs for one of its at most $d$ neighbours. The union bound gives a probability smaller than $d\alpha<\beta$. Listing all the orderings proves \eqref{eq:random-interior}.
\end{proof}

\begin{corollary}[Separated clopen packings]\label{cor:separated-packings}
Let $H$ be a topologically amenable bounded-degree clopen graph. For every $s\geq1$ and $\theta>0$, there are finitely many clopen packings $\mathcal P_1,\ldots,\mathcal P_m$ such that distinct tiles of each packing have $H$-distance greater than $s$ and, for every $x\in X$,
\begin{equation}\label{eq:separated-coverage}
 |\{i:x\in[\mathcal P_i]\}|>(1-\theta)m.
\end{equation}
Their tiles have a common finite orbit bound, but are not required to be F\o lner.
\end{corollary}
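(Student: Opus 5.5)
We derive Corollary~\ref{cor:separated-packings} from Proposition~\ref{prop:random-tilings} by shrinking each tile of a random tiling to its deep interior. Fix $s\geq1$ and $\theta>0$, and let $d\geq1$ bound the degree of $H$. Consider the bounded-degree clopen graph $H^{[r]}$ with $r=s+1$; it is again topologically amenable, since it is given by finitely many compositions of the charts of $H$ and amenability of the orbit relation of the restricted action does not depend on the finite generating set. Alternatively, we can work with $H$ itself and use iterated boundaries. Concretely, we apply Proposition~\ref{prop:random-tilings} to $H$ with a parameter $\beta$ chosen below, and obtain full clopen tilings $\mathcal E_1,\ldots,\mathcal E_q$ with a common orbit bound satisfying \eqref{eq:random-interior}.

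For each $j$ define the packing $\mathcal P_j$ whose tiles are
\[
 T^\circ=\{x\in T:\ B_H(x,s)\subseteq T\},\qquad T\in\mathcal E_j,
\]
discarding empty ones. Distinct tiles of $\mathcal P_j$ lie in distinct tiles of $\mathcal E_j$, and any $H$-path of length at most $s$ starting from a point of $T^\circ$ stays in $T$. Hence two points in different shrunken tiles have $H$-distance greater than $s$. Clopenness follows from the tower description. On each tower of $\mathcal E_j$, the condition $B_H(x,s)\subseteq\mathcal E_j(x)$ is decided by the radius-$s$ labelled neighbourhood type together with finitely many incidence tests against the levels. All of these are clopen, so the shrunken tiles form finitely many clopen towers after refining bases. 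The common orbit bound is inherited from the one for the $\mathcal E_j$.

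It remains to prove \eqref{eq:separated-coverage}. A point $x$ fails to lie in $[\mathcal P_j]$ exactly when some $y\in B_H(x,s)$ lies in $D_H(\mathcal E_j)$. Indeed, if $B_H(x,s)\not\subseteq\mathcal E_j(x)$, take a path from $x$ leaving the tile; its last vertex in the tile is within distance $s-1$ of $x$ and is a boundary point. Writing $c_s=\sup_x|B_H(x,s)|\leq 1+d+\cdots+d^s$, the union bound gives
\[
 \frac1q\bigl|\{j:x\notin[\mathcal P_j]\}\bigr|
 \leq\sum_{y\in B_H(x,s)}\frac1q\sum_{j=1}^q\1_{D_H(\mathcal E_j)}(y)
 <c_s\beta .
\]
Choosing $\beta=\theta/c_s$ and $m=q$ yields \eqref{eq:separated-coverage}.

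The only step needing care is the clopenness of the shrunken packings. There we must check that the condition $B_H(x,s)\subseteq\mathcal E_j(x)$ is a finite Boolean combination of clopen conditions. This follows because $\mathcal E_j$ is a finite union of graphs of clopen orbit maps and $B_H(x,s)\subseteq Dx$ for a finite $D$. The estimate itself is routine.
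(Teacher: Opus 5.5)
Your proof is correct. It uses the same construction as the paper: shrink each tile of a random clopen tiling to the points whose radius-$s$ $H$-ball lies in the tile. The difference is in how you get the coverage estimate.

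The paper applies Proposition~\ref{prop:random-tilings} to $H^{[s]}$ with error $\theta$. Since $x\notin D_{H^{[s]}}(\mathcal E_j)$ means precisely $B_H(x,s)\subseteq\mathcal E_j(x)$, the retained points are exactly the complement of $D_{H^{[s]}}(\mathcal E_j)$. So \eqref{eq:random-interior} is literally \eqref{eq:separated-coverage}, and no further estimate is needed.

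You instead apply the proposition to $H$ itself with $\beta=\theta/c_s$, and pay for the $s$-neighbourhood with a union bound over $B_H(x,s)$.
\begin{itemize}
\item Your route avoids needing topological amenability of $H^{[s]}$, which the paper takes from the locality statement \eqref{eq:amenability-locality} in the Preliminaries. The cost is a smaller parameter, which is harmless.
\item The proof of Proposition~\ref{prop:random-tilings} is itself a union bound over the neighbours of $x$. So the two routes perform the same computation at different levels.
\item Your opening remark about $H^{[s+1]}$ is in fact the paper's route, and $H^{[s]}$ already suffices. Had you taken it, the union-bound paragraph would be unnecessary.
\end{itemize}

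One small inaccuracy: $x\notin[\mathcal P_j]$ does not hold \emph{exactly} when $B_H(x,s)$ meets $D_H(\mathcal E_j)$. A boundary point of $\mathcal E_j(x)$ at distance exactly $s$ from $x$ does not disqualify $x$. The correct equivalence uses $B_H(x,s-1)$. Your estimate only needs the forward implication, which you prove correctly via the last vertex of the path inside the tile.
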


\begin{proof}
The graph $H^{[s]}$ is topologically amenable by the Preliminaries. Apply Proposition~\ref{prop:random-tilings} to $H^{[s]}$ with error $\theta$. From each tile retain only the points whose entire radius-$s$ $H$-ball lies in that tile, and discard empty sets. These are clopen packings, and \eqref{eq:random-interior} gives \eqref{eq:separated-coverage}. Two retained points in different tiles cannot be at $H$-distance at most $s$, because then the ball of the first would contain the second. The finite orbit bound is inherited from the tilings.
\end{proof}

\subsection{The clopen three-to-two principle}

We retain the three-to-two argument, including the finite matching construction. Its finite-order reduction and short-augmenting-path method follow Glasner--Liu \cite[Sections~3--4]{GlasnerLiu}. In the tree part below we apply the localized amenability property \eqref{eq:localized-kernel} to an auxiliary bounded-degree clopen graph on a finite amplification.
\begin{proposition}[Clopen three-to-two]\label{prop:clopen-three-two}
Suppose that the orbit relation of $\Gamma\curvearrowright X$ is topologically amenable. If $A,B\subseteq X$ are clopen and $3A\preceqcl2B$, then $A\preceqcl B$. More precisely, given a clopen injection $J:A\times[3]\to B\times[2]$, there is a clopen injection $I:A\to B$ such that $I(a)$ is the underlying $B$-coordinate of one of $J(a,1),J(a,2),J(a,3)$ for every $a\in A$. The same assertion holds in finite amplifications; minimality and an invariant measure are not required.
\end{proposition}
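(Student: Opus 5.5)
We follow the proof of Lemma~\ref{lem:borel-three-two} step by step, replacing every Borel choice by a clopen one. Each choice has to be made by finitely many clopen orbit maps on a finite clopen partition.

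\emph{The auxiliary graph.} Put $Y=B\times[2]$ and $F=J(A\times[3])$; both are clopen in a finite amplification. Define the permutation $p$ of $Y$ and the swap $q(b,1)=(b,2)$, $q(b,2)=(b,1)$ as in the Borel proof. Since $J$ is a clopen injection, $p$ is a clopen orbit map of $Y$ of order three.

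We need representatives of the $p$-orbits. Instead of a Borel order, use a finite clopen colouring separating the at most three points of each $p$-orbit, obtained as in the finite clopen proper colouring remark. Let $V$ be the set of least-coloured points; it is clopen, and $\pi:Y\to V$ is a clopen orbit map on pieces.

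The multigraph $H$ on $V$ has edges $e_b$ joining $\pi(b,1)$ and $\pi(b,2)$. Its incidence relation is given by finitely many clopen orbit maps, so $H$ is a bounded-degree clopen (multi)graph in the amplification. It suffices to select clopenly, at every degree-three vertex $v_a$, one incident edge so that no non-loop edge is selected at two degree-three endpoints. The injectivity and displacement argument for $I$ is then verbatim the one in the Borel proof. Clopenness of $I$ comes from the fact that each selected edge is $J(a,i)$ for one of finitely many $i$ on clopen pieces.

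\emph{The main obstacle: the three cases.} In the Borel proof the selection splits into components with a leaf, components with a cycle, and three-regular trees, and it uses unbounded distances to $L$ and $D_n$. That splitting is not clopen, because these distances are unbounded and the sets $X_n$ need not be clopen. So we avoid the case split and run the amenability argument everywhere at once, on finite windows.

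Apply the localized amenability \eqref{eq:localized-kernel} to the clopen graph $\whG$ on $X\times[2]$, with $H$-adjacent vertices at $\whG$-distance at most $K_0$. This gives continuous rational kernels $\lambda_v$ supported in radius $R$ with $\|\lambda_v-\lambda_w\|_1<1/4$ for $H$-adjacent $v,w$.

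Replace the nearest-point map onto a tree component by a local version. Fix $r\gg R+K_0$. For $z$ in the support of $\lambda_v$, let $r_v(z)$ be the colour-least vertex of $H$ nearest to $z$ among vertices within $H$-distance $r$ of $v$. For each incident edge $e$ of $v$, define $f(v,e)$ as the $\lambda_v$-mass sent into the part of the radius-$r$ $H$-ball reached first through $e$, using the colour-least shortest-path rule for ties. This depends only on a bounded labelled neighbourhood of $v$, so it is a clopen function with finitely many values.

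Inside the window, vertices reached through different edges are distinct, so $\sum_e f(v,e)\le1$ still holds, and some edge has $f(v,e)\le1/3$. The complementarity estimate $f(v,e)+f(w,e)>3/4$ is harder, and we expect it to be the real difficulty. When the $r$-ball is a tree it holds exactly as in the Borel proof. When a short cycle or a leaf lies within distance $r$, we first handle it locally, as in the Borel cases but with a cutoff: leaf-directed and cycle-directed selections by bounded distance, arranged by a finite clopen colouring of $H^{[s]}$ for a suitable $s$, in the spirit of Glasner--Liu's finite-order reduction. Only vertices whose $r$-ball is a tree then use the amenability rule.

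We must also check that no edge at the interface is selected at both ends. Our plan is to use the strict-decrease argument for the local distance to the nearest leaf or short cycle, and for vertices far from any such feature to use the tree estimate. Taking $r$ large relative to $R$ and $K_0$ makes the two regimes consistent. At each vertex, the selected edge is the least-coloured edge minimizing $f$ or the local distance, which gives a clopen selection and hence the clopen injection $I$. Finite amplifications are covered by the same argument, since amenability passes to them as shown in the Preliminaries.
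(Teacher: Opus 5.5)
Your idea of replacing the leaf/cycle/tree case split by a single bounded-radius selection rule is a genuinely different route from the paper's, and it can be made to work. As written, however, two steps fail.

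\emph{Complementarity.} The estimate $f(v,e)+f(w,e)>3/4$ does not carry over ``exactly as in the Borel proof''. There, $\nu_v$ and $\nu_w$ are push-forwards under one and the same map $r_T$, and $T_{v\to w}$, $T_{w\to v}$ partition $T$, so every point $z$ is counted in exactly one of $f(v,e)$, $f(w,e)$. Your $r_v$ and $r_w$ are nearest-point maps onto different windows $B_H(v,r)$ and $B_H(w,r)$. A point $z$ whose nearest window point lies on a window boundary can be sent to $v$'s side by $r_v$ and to $w$'s side by $r_w$, and is then counted in neither.

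Taking $r$ large does not help when the kernels live on an ambient graph such as your $\widehat G$, whose metric does not control $H$-distance. An $H$-component can fold back, so that $\widehat G$-neighbours of $v$ sit at $H$-distance exactly $r$.

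The fix is to drop the ambient graph. In the clopen setting, amenability passes to every bounded-degree clopen subgraph with its own metric. So apply the localized kernels to the Schreier graph $L$ of $\langle p,q\rangle$ on $X\times[2]$ (edges $y\sim py,p^2y,qy$), whose components are exactly the uncollapsed $H$-components, and put $\nu_v=\pi_*\lambda_v$. Then $\supp\nu_v\subseteq B_H(v,R_0)$, no nearest-point map is needed, and complementarity holds whenever $B_H(v,R_0+2)$ is a tree. This is what the paper does on $Z$.

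\emph{The interface.} Suppose the amenability regime is ``the $r$-ball is a tree'', while the other vertices decrease the distance $\delta$ to the selected leaves and pairwise disjoint cycles. Then collisions can occur. A selected cycle can pass within distance $r-1$ of $v$ without lying in $B_H(v,r)$, so $v$ uses the amenability rule although $\delta(v)\le r-1$. A feature-regime neighbour $w$ with $\delta(w)=\delta(v)+1$ may then select $\{w,v\}$, and nothing prevents $v$ from selecting the same edge. The two regimes must therefore be cut by the same truncated potential.

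\begin{itemize}
\item Let $\Phi$ consist of the degree-one vertices together with the vertices of a maximal pairwise disjoint family of cycles of length at most $2r+1$. Obtain this family greedily over the finitely many root-colour/label patterns of a clopen colouring of $H^{[4r+2]}$; cycles with the same pattern are automatically disjoint.
\item Vertices with $1\le d_H(\cdot,\Phi)\le s$ step to a neighbour closer to $\Phi$, and cycle vertices follow the orientation.
\item Use the amenability rule exactly when $d_H(v,\Phi)>s$, where $s\ge r\ge R_0+2$.
\end{itemize}

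Maximality forces $B_H(v,r)$ to be a three-regular tree in the last case. A vertex at distance $s$ never selects an edge towards distance $s+1$. Hence no edge is selected at both ends.

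With these repairs you obtain a purely local proof with no augmenting paths. The paper instead applies amenability only on the closed invariant set $Z$ of genuine tree components and extends that choice to a clopen partial matching $M_0$. It then disposes of the rest by compactness, finite Hall-type certificates and the Glasner--Liu augmentation. That avoids exactly the interface bookkeeping your sketch leaves open, at the price of the matching machinery.
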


\begin{proof}
If $A=\varnothing$, there is nothing to prove.  Fix the injection $J$ from the statement.  We first introduce the finite clopen objects that will be used throughout the proof.

\medskip
\noindent\textbf{Setup.}
Put
\[
   Y=B\times[2],\qquad
   J_i(a)=J(a,i),\qquad
   F_i=J_i(A),\qquad
   F=F_1\sqcup F_2\sqcup F_3.
\]
Define a clopen homeomorphism $p$ of $Y$ by cycling
\[
   J_1(a)\mapsto J_2(a)\mapsto J_3(a)\mapsto J_1(a)
\]
and fixing $Y\setminus F$.  Let $q$ interchange the two copies of $B$.  Thus
\[
      p^3=q^2=\id.
\]

Let $V_3=F_1$, let $V_1=Y\setminus F$, and put $V=V_3\sqcup V_1$.  Define
\[
  \pi:Y\longrightarrow V
\]
by $\pi=\id,p^{-1},p^{-2}$ on $F_1,F_2,F_3$, respectively, and by $\pi=\id$ on $V_1$.  Define the clopen multigraph $H$ with vertex set $V=V_3\sqcup V_1$ by giving one labelled edge $e_b$ to each $b\in B$, with endpoints
\begin{equation}
      \pi(b,1),\qquad \pi(b,2).
      \label{eq:clopen-incidence-edge}
\end{equation}  A loop is allowed, distinct $b$'s may give parallel edges, and a loop contributes two to the degree.

The maps $p$ and $q$ also generate an action of $\Pi=C_3*C_2$ on $Y$.  Its labelled Schreier graph has the three-point $p$-orbits in $F$ as triangles, while the $p$-orbits in $Y\setminus F$ are singletons; the $q$-edge indexed by $b$ joins $(b,1)$ to $(b,2)$.  If every $p$-orbit is collapsed to a single vertex and the labelled $q$-edges are retained, the resulting multigraph is exactly $H$.  Thus $H$ is the collapsed graph in which the three points associated with $a$ are represented by the single vertex $v_a$.  We use $H$ for the edge-selection problem; when amenability is invoked below, we return to the uncollapsed graph on $Y$.

Counting the three points in a $p$-orbit shows that every vertex of $V_3$ has degree three, while every vertex of $V_1$ has degree one.  The degree-three vertex corresponding to $a\in A$ is
\[
      v_a=J_1(a).
\]

The required injection is equivalent to a collision-free choice of one incident edge at every vertex of $V_3$.  Indeed, if $v_a$ chooses $e_b$, define $I(a)=b$.  By \eqref{eq:clopen-incidence-edge}, one of the three points $J(a,i)$ equals $(b,j)$ for some $j\in[2]$, so $b$ is one of the prescribed targets.  If two distinct vertices $v_a,v_{a'}$ had the same image $b$, then the same non-loop edge $e_b$ would have been chosen at both endpoints.  A loop has only one degree-three endpoint and causes no collision.

It will also be useful to keep the corresponding bipartite description.  For $i\in[3]$ and $j\in[2]$, let
\begin{equation}
 D_{ij}=\{a\in A:J(a,i)\in B\times\{j\}\},
 \qquad
 \phi_{ij}(a)=b\quad\text{if }J(a,i)=(b,j).
 \label{eq:clopen-six-charts}
\end{equation}
These six charts form a bounded-degree clopen bipartite multigraph from $A$ to $B$.  A clopen matching covering all of $A$ gives exactly the required injection.

Equivalently, a clopen set $C\subseteq F$ specifies a partial matching when
\begin{equation}
   C,\ pC,\ p^2C \text{ are pairwise disjoint},
   \qquad C\cap qC=\varnothing.
   \label{eq:clopen-partial-matching}
\end{equation}
It covers all left vertices exactly when
\begin{equation}
     F=C\sqcup pC\sqcup p^2C.
     \label{eq:clopen-full-matching}
\end{equation}
The selected edge at $a$ is the one whose point $J(a,i)$ belongs to $C$.

We now separate the genuinely tree-like part of $H$.  For $n\ge1$, let $W_n\subseteq V_3$ be the set of vertices whose radius-$n$ ball in $H$ contains either a vertex of $V_1$ or a cycle.  Here degree is always computed in $H$, not in the ball; a loop is a cycle of length one and a pair of parallel edges is a cycle of length two.  By the clopen local-neighbourhood property, every $W_n$ is clopen.  Hence
\begin{equation}
       Z_3=V_3\setminus\bigcup_{n\ge1}W_n
       \label{eq:clopen-tree-components}
\end{equation}
is closed.  It is exactly the union of the three-regular tree components of $H$, viewed in $V_3$.  Put
\begin{equation}
       Z=\pi^{-1}(Z_3)=Z_3\sqcup pZ_3\sqcup p^2Z_3.
       \label{eq:clopen-uncollapsed-tree}
\end{equation}
Then $Z$ is a closed subset of $Y$ and is invariant under the action of $\Pi=C_3*C_2$.  Indeed, $p$-invariance is immediate.  For $q$, observe that a $q$-edge projects under $\pi$ to an edge of $H$; an endpoint of an edge lying in a three-regular tree component lies in the same component.

\begin{lemma}[Clopen selection on the three-regular tree part]\label{lem:tree-part}
There is a set $C_Z\subseteq Z$, clopen in $Z$, such that
\begin{enumerate}[label=\textup{(\roman*)},leftmargin=2.2em]
\item $C_Z$ meets every three-point $p$-orbit in $Z$ in exactly one point;
\item $C_Z\cap qC_Z=\varnothing$.
\end{enumerate}
Thus the corresponding edges give a clopen collision-free choice at every vertex of $Z_3$.
\end{lemma}

\begin{proof}
Extend $p$ and $q$ by the identity outside $Y$ on the two-fold amplification $\widehat X=X\times[2]$.  Consider the simple clopen graph $L$ on $\widehat X$ in which
\begin{equation}
   y\sim_L z
   \quad\Longleftrightarrow\quad
   y\ne z\ \text{ and }\ z\in\{py,p^2y,qy\}.
   \label{eq:clopen-triangle-graph}
\end{equation}
It has degree at most three.  Each triple $J_1(a),J_2(a),J_3(a)$ spans a triangle, and each $b\in B$ contributes the edge between $(b,1)$ and $(b,2)$.  On $Z$, the graph is obtained from $H|_{Z_3}$ by replacing each vertex $v$ by the triangle $\{v,pv,p^2v\}$ and joining the appropriate triangle vertices by the $q$-edges.

The orbit relation is topologically amenable, so the bounded-degree clopen graph $L$ on the finite amplification is topologically amenable by the Preliminaries. Apply \eqref{eq:localized-kernel} to $L$ with error $1/20$.  We obtain a common radius $R_0$ and continuous probabilities $\lambda_y$ such that
\begin{equation}
    \supp\lambda_y\subseteq \Ball_L(y,R_0)
    \label{eq:clopen-tree-kernel-support}
\end{equation}
and
\begin{equation}
 \|\lambda_y-\lambda_{py}\|_1<\frac1{20},\qquad
 \|\lambda_y-\lambda_{p^2y}\|_1<\frac1{20},\qquad
 \|\lambda_y-\lambda_{qy}\|_1<\frac1{20}.
 \label{eq:clopen-tree-kernel-variation}
\end{equation}
Because $Z$ is $p$- and $q$-invariant, the support of $\lambda_v$ stays inside $Z$ whenever $v\in Z_3$.  Define a probability on the vertices of the tree component of $v$ by
\begin{equation}
       \nu_v=\pi_*\lambda_v.
       \label{eq:clopen-tree-pushforward}
\end{equation}
If $v,w\in Z_3$ are adjacent in $H$, then for some $i,j\in\{0,1,2\}$,
\[
       w=p^{-j}qp^iv.
\]
Applying \eqref{eq:clopen-tree-kernel-variation} successively along the clopen orbit maps $p^i$, $q$, and $p^{-j}$, and using contraction of $\ell^1$-distance under the same pushforward,
\begin{equation}
       \|\nu_v-\nu_w\|_1<\frac3{20}<\frac14.
       \label{eq:clopen-tree-edge-variation}
\end{equation}

Let $e=\{v,w\}$ be an edge of the tree.  After removing $e$, denote by $T_{v\to w}$ the component containing $w$, and set
\begin{equation}
       f(v,e)=\nu_v(T_{v\to w}).
       \label{eq:clopen-sector-weight}
\end{equation}
For each of the three incident edge labels at $v$, the function $f(v,e)$ is continuous on $Z_3$.  Indeed, \eqref{eq:clopen-tree-kernel-support} gives a uniform bound $R$ on the $H$-distance from $v$ to the support of $\nu_v$.  For a support point $u$, the condition $u\in T_{v\to w}$ is decided inside the radius-$(R+1)$ ball around $v$: in a tree, it says exactly that the unique path from $v$ to $u$ starts with $e$.  This is a fixed-radius clopen condition, while the finitely many weights in $\nu_v$ vary continuously.

The three half-trees at $v$ are disjoint, so
\begin{equation}
       \min_{e\ni v} f(v,e)\le \frac13.
       \label{eq:clopen-small-sector}
\end{equation}
At the two ends of an edge the corresponding half-trees are complementary.  Therefore, by \eqref{eq:clopen-tree-edge-variation},
\begin{equation}
       f(v,e)+f(w,e)
       \ge 1-\|\nu_v-\nu_w\|_1
       >\frac34.
       \label{eq:clopen-opposite-sectors}
\end{equation}
Choose
\[
       \frac13<c<\frac38.
\]
At every vertex some incident edge satisfies $f(v,e)<c$ by \eqref{eq:clopen-small-sector}, while by \eqref{eq:clopen-opposite-sectors} no edge satisfies this inequality at both ends.

Label the three incident edges at $v\in Z_3$ by the three points $v,pv,p^2v$.  For $i=0,1,2$, let $U_i$ be the open set on which the edge carrying the label $p^iv$ satisfies $f<c$.  The sets $U_0,U_1,U_2$ cover $Z_3$.  Since $Z_3$ is compact and zero-dimensional, there is a clopen partition
\[
       Z_3=C_0\sqcup C_1\sqcup C_2
\]
with $C_i\subseteq U_i$.  Set
\begin{equation}
       C_Z=C_0\sqcup pC_1\sqcup p^2C_2.
       \label{eq:clopen-tree-transversal}
\end{equation}
This set is clopen in $Z$ and meets each $p$-orbit exactly once.  The chosen edge at a vertex is the one represented by the selected point of its triangle.  Since no tree edge satisfies $f<c$ at both ends, no edge is selected from both ends, which is exactly $C_Z\cap qC_Z=\varnothing$.
\end{proof}

\begin{lemma}[A clopen partial matching extending the tree choice]\label{lem:extend-tree}
There is a clopen partial matching $M_0$ in the six charts \eqref{eq:clopen-six-charts} that covers every $a\in A$ such that $J_1(a)\in Z_3$.
\end{lemma}

\begin{proof}
The set $C_Z$ is clopen in the closed set $Z$, and $C_Z$ and $Z\setminus C_Z$ are disjoint compact sets.  We choose $D$ to separate $C_Z$ from $Z\setminus C_Z$ inside $F$; such a clopen separator exists because $F$ is a Cantor space.  Thus there is a clopen set $D\subseteq F$ with
\[
       D\cap Z=C_Z.
\]
Set
\begin{equation}
       C^{(0)}=D\setminus\bigl(pD\cup p^2D\cup qD\bigr).
       \label{eq:clopen-extended-transversal}
\end{equation}
Then $C^{(0)}$ satisfies \eqref{eq:clopen-partial-matching}: it is contained in $D$ and avoids $pD$ and $p^2D$, so $C^{(0)},pC^{(0)},p^2C^{(0)}$ are pairwise disjoint, and its avoidance of $qD$ gives $C^{(0)}\cap qC^{(0)}=\varnothing$.  Hence it defines a clopen partial matching $M_0$.

Since $Z$ is invariant under $p$ and $q$ and $D\cap Z=C_Z$, we have
\[
   (pD)\cap Z=pC_Z,\qquad
   (p^2D)\cap Z=p^2C_Z,\qquad
   (qD)\cap Z=qC_Z.
\]
Therefore
\[
   C^{(0)}\cap Z
   =C_Z\setminus\bigl(pC_Z\cup p^2C_Z\cup qC_Z\bigr)
   =C_Z,
\]
where the last equality follows from Lemma~\ref{lem:tree-part}.  If $v_a=J_1(a)\in Z_3$, then $C_Z$ meets the three-point $p$-orbit $\{J_1(a),J_2(a),J_3(a)\}$ exactly once.  Hence exactly one of these three points belongs to $C^{(0)}$, so $M_0$ matches $a$.  Thus $M_0$ covers every $a\in A$ such that $J_1(a)\in Z_3$.
\end{proof}

Let
\begin{equation}
       U_0=A\setminus\dom M_0.
       \label{eq:clopen-initial-unmatched}
\end{equation}
This set is clopen, hence compact.  If $U_0=\varnothing$, the proof is complete.  Otherwise the increasing family
\[
       \{J_1^{-1}(W_n):n\ge1\}
\]
is an open cover of $U_0$, because every tree vertex was already covered.  Choose $N$ such that
\begin{equation}
       J_1(U_0)\subseteq W_N,
       \label{eq:clopen-certificate-radius}
\end{equation}
and let $r$ be an upper bound for the number of vertices in a radius-$N$ ball of the degree-at-most-three multigraph $H$.

\begin{lemma}[Finite certificates outside the tree part]\label{lem:certificate}
For every $a\in U_0$ there are a set
\[
       a\in S_a\subseteq A,\qquad |S_a|\le r,
\]
and an injection
\[
       j_a:S_a\longrightarrow B
\]
along the allowed edges \eqref{eq:clopen-six-charts}, such that every left neighbour, in the bipartite multigraph defined by the six charts \eqref{eq:clopen-six-charts}, of every point of $j_a(S_a)$ belongs to $S_a$.

Consequently, for every matching $M$ that leaves $a$ unmatched, there is an $M$-augmenting path starting at $a$ of length at most $2|S_a|-1\le 2r-1$.
\end{lemma}

\begin{proof}
Throughout this proof, all balls, paths, cycles, and selected edges are taken in the multigraph $H$; we return to the bipartite chart multigraph only when translating the resulting finite configuration back to $A\sqcup B$.

Fix $a\in U_0$ and work inside the finite multigraph induced by the radius-$N$ ball around $v_a$.  By \eqref{eq:clopen-certificate-radius}, this ball contains either a vertex of $V_1$ or a cycle.  At this point we only need to construct one certificate for this fixed $a$; no clopen or continuous choice of the objects below is required.
If the ball contains vertices of $V_1$, choose one whose $H$-distance from $v_a$ is minimal, and take a shortest path from $v_a$ to that vertex.  At each degree-three vertex of the path, select the next edge toward the endpoint.  The selected edges are distinct, and every degree-three endpoint of a selected edge lies on the same path.

Otherwise the ball contains a cycle.  Choose a simple cycle whose vertex set has minimal $H$-distance from $v_a$, and take a shortest path, still inside the same ball, from $v_a$ to the vertex set of this cycle.  The path meets the cycle only at its endpoint.  Orient the cycle.  At each cycle vertex select its outgoing cycle edge, and at every other degree-three vertex on the path select the next edge toward the cycle.  Again the selected edges are distinct, and every degree-three endpoint of a selected edge belongs to the chosen path-and-cycle set.  A loop and a two-edge cycle are included in the same description.

Let $R_a$ be the set of degree-three vertices on the chosen path, or on the chosen path together with the cycle, respectively, and set $S_a=\{a'\in A: v_{a'}\in R_a\}$.  Translate each selected labelled edge $e_b$ at $v_{a'}$ to the assignment $j_a(a')=b$.  The chosen vertices all lie in the radius-$N$ ball, hence $|S_a|\le r$.  The endpoint property just verified says precisely that every left neighbour of $j_a(S_a)$ lies in $S_a$.  This completes the construction of the certificate.

Now let $M$ be any matching leaving $a$ unmatched.  Starting at $a$, follow the $j_a$-edge forward.  If the resulting right vertex is matched, follow its unique $M$-edge backward, then repeat.  Every left vertex encountered belongs to $S_a$.  No left vertex can be visited twice.  Indeed, revisiting a left vertex would force the preceding right vertex to be the same, since a left vertex is incident to at most one edge of $M$; injectivity of $j_a$ then forces the preceding left vertex to be the same as well.  Repeating this argument backwards would eventually require an $M$-edge entering $a$, which is impossible because $a$ is unmatched.  Since $S_a$ is finite, the process therefore stops at an unmatched right vertex after at most $|S_a|$ forward steps.

The right vertices are distinct because $j_a$ is injective.  At a later left vertex, the matching edge returns to the preceding right vertex, whereas the new $j_a$-edge has a different right endpoint.  Thus every forward edge is outside $M$ and every backward edge lies in $M$.  The chain is therefore an augmenting path, of length at most $2|S_a|-1$.
\end{proof}

We use the following clopen form of the finite-stage augmentation argument.

\begin{lemma}[Finite clopen augmentation]\label{lem:clopen-augmentation}
Let a bipartite multigraph with left and right vertex spaces $L$ and $R$ be presented by finitely many clopen charts
\[
       \phi_s:D_s\to R_s.
\]
Given a clopen matching $M_0$ and an integer $r\ge1$, finitely many clopen operations produce a clopen matching $M$ with no augmenting path of length at most $2r-1$.  Every vertex matched by $M_0$ remains matched.
\end{lemma}

An augmenting path starts at an unmatched left vertex, ends at an unmatched right vertex, and alternates between edges outside the matching and edges in the matching. Its first and last edges are outside the matching. A matching is clopen when, in each chart, the set of roots of selected edges is clopen.

\begin{proof}
It is enough to explain one stage.  Suppose that the current matching $M$ has no augmenting path shorter than
\[
       \ell=2k+1.
\]
Direct every edge outside $M$ from left to right and every edge in $M$ from right to left.  Let $d(v)$ be the directed distance from the set of unmatched left vertices, with value $\infty$ if $v$ is unreachable.

If
\begin{equation}\label{eq:claug-path}
       (v_0,v_1,\ldots,v_\ell)
\end{equation}
is an augmenting path of length $\ell$, then
\begin{equation}\label{eq:claug-distance}
       d(v_j)=j\qquad(0\le j\le\ell).
\end{equation}
Indeed, a shorter directed path to $v_j$, followed by the remaining tail of \eqref{eq:claug-path}, would give a shorter directed walk from an unmatched left vertex to an unmatched right vertex; after erasing repetitions one obtains a shorter augmenting path.

A length-$\ell$ alternating walk is determined by its initial left vertex and the finite string of chart labels used on its forward edges; the backward steps are forced by $M^{-1}$.  There are only finitely many such strings.  For a fixed string $s$, the set $D_s^{\mathrm{root}}$ of roots producing an alternating walk with unmatched endpoints is clopen: all chart-domain conditions, exclusions of matching edges, and endpoint conditions are clopen.  Since no shorter augmenting path exists, every such walk is already a path.  Its position maps
\[
       t_{s,j}:D_s^{\mathrm{root}}\to L\sqcup R,
       \qquad 0\le j\le\ell,
\]
are homeomorphisms onto clopen sets.  Two paths with the same string and different roots cannot meet.  At the same position this follows from injectivity of the position map; at different positions it would contradict \eqref{eq:claug-distance}.

Process the finitely many strings in a fixed order.  Suppose $W$ is the clopen set of vertices lying on paths already retained.  For the next string retain exactly the roots in
\[
       D_s^{\mathrm{root}}
       \setminus
       \bigcup_{j=0}^{\ell} t_{s,j}^{-1}(W).
\]
Update $W$ by adding the vertices of these retained paths.  These are finite clopen operations.  At the end we have a pairwise vertex-disjoint family that is maximal among the old length-$\ell$ augmenting paths.  Flip the matching simultaneously along all retained paths, obtaining $M'$; no previously matched vertex becomes unmatched.

We claim that $M'$ has no augmenting path of length at most $\ell$.  For an edge whose matching status did not change, its orientation for $M'$ increases the old distance $d$ by at most one.  Every edge whose status changed belongs to a retained path; by \eqref{eq:claug-distance}, its new orientation decreases $d$ by one.

Suppose that $Q$ were an $M'$-augmenting path of length $h\le\ell$.  Both endpoints of $Q$ were also unmatched for $M$, since the flip only matches previously unmatched endpoints and never unmatches a matched vertex.  Its initial old distance is therefore zero, while the terminal old distance is at least $\ell$ by the inductive assumption.  Starting at the initial vertex, the edge inequalities also show that all old distances along $Q$ are finite. Following $Q$ shows that necessarily $h=\ell$ and that $d$ increases by one at every step.  Hence $Q$ uses no changed edge.

Moreover $Q$ cannot meet any retained path.  Every vertex on a retained path is matched in $M'$, and its unique $M'$-matching edge on that path is a changed edge.  An augmenting path passing through such a vertex would have to use that matching edge.  Thus $Q$ is an old length-$\ell$ augmenting path disjoint from the maximal retained family, a contradiction.

Starting with $M_0$, perform this stage successively for
\[
       \ell=1,3,\ldots,2r-1.
\]
There are finitely many stages, and each stage preserves every previously matched vertex.  The final matching has the required property.
\end{proof}

We now finish the proof of the proposition.  By Lemmas~\ref{lem:tree-part} and~\ref{lem:extend-tree}, we have the clopen partial matching $M_0$ covering the whole three-regular tree part.  If $M_0$ already covers $A$, the correspondence in the setup gives the required injection.

Otherwise choose $N$ and $r$ as in \eqref{eq:clopen-certificate-radius}.  Apply Lemma~\ref{lem:clopen-augmentation} to $M_0$ with this value of $r$.  We obtain a clopen matching $M$ with no augmenting path of length at most $2r-1$, and every vertex matched by $M_0$ remains matched.

Suppose that some left vertex $a$ were still unmatched.  Since every left vertex covered by $M_0$ stays covered, we have $a\in U_0$.  Lemma~\ref{lem:certificate} then produces an $M$-augmenting path starting at $a$ of length at most $2r-1$, a contradiction.  Hence $M$ covers all of $A$.

By the six charts \eqref{eq:clopen-six-charts}, this matching gives a clopen injection $I:A\to B$.  By construction, $I(a)$ is always the underlying $B$-coordinate of one of $J(a,1),J(a,2),J(a,3)$.  Every construction above uses only finitely many clopen charts and finite sheet labels, so the same proof works verbatim in finite amplifications.
\end{proof}

\subsection{Dynamical comparison}

We use the clopen type semigroup $\mathsf S$ introduced in the Preliminaries and put $u=[X]$.

\begin{proposition}[Comparison]\label{prop:comparison}
Let $\Gamma\curvearrowright X$ be minimal, and suppose that its orbit relation is topologically amenable. If $A,B\subseteq X$ are clopen, $B\ne\varnothing$, and
\[
 \mu(A)<\mu(B)\qquad(\mu\in\M),
\]
then $A\preceqcl B$.
\end{proposition}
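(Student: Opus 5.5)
The plan is to transcribe the proof of the Uniform F\o lner comparison theorem (Theorem~\ref{thm:borel-comparison}) into the clopen setting. Proposition~\ref{prop:clopen-three-two} replaces the Borel three-to-two lemma, Lemma~\ref{lem:clopen-augmentation} replaces Lemma~\ref{lem:borel-augmentation}, and Corollary~\ref{cor:separated-packings} (together with Proposition~\ref{prop:random-tilings}) replaces Packing input~I. First I fix a bounded-degree clopen graph $K$ whose generators include enough clopen orbit maps. The first step converts the measure inequality into a uniform combinatorial surplus on F\o lner sets: there are $\rho,\gamma>0$ such that every $\rho$-F\o lner set $F$ for $K$ in one orbit satisfies $|B\cap F|-|A\cap F|\ge\gamma|F|$.

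\textbf{Surplus from measures.} The continuous function $\mu\mapsto\mu(B)-\mu(A)$ is positive on the weak$^*$ compact set $\M$, so it is bounded below by some $2\gamma>0$. (If $\M=\varnothing$, the surplus claim below still holds: the averaging limit would produce an invariant measure.) Suppose the surplus failed for every $\rho$. Then there would be F\o lner sets $F_k$ with $|B\cap F_k|-|A\cap F_k|<\gamma|F_k|$ whose F\o lner constants tend to $0$ as $K$ runs through an exhausting sequence of finite generating families. The uniform averages over $F_k$ of Dirac masses then have a weak$^*$ limit point which is $\Gamma$-invariant, since the generating families exhaust $\Gamma$. Its measure of the clopen sets $A$ and $B$ violates the bound $2\gamma$. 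Here I must be careful that F\o lnerness for a single $K$ does not suffice. So the surplus will be stated for $\rho$-F\o lner sets with respect to a finite family $K$ that is fixed at the end; this is a compactness argument over finitely many generators, and I expect it to work because $A$ and $B$ are clopen.

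\textbf{Amplified injection and matching.} With the surplus in hand, I choose $\ell\gamma\ge1$ and apply Corollary~\ref{cor:separated-packings} and Proposition~\ref{prop:random-tilings}, after thinning their tiles to $\rho$-F\o lner sets by the Elek--Tim\'ar local replacement. This produces packings $\mathcal P_1,\dots,\mathcal P_m$ with coverage greater than $(1-\theta)m$. Lexicographically first injections inside each tile are clopen, because tiles come in finitely many tower types. As in \eqref{eq:borel-many-to-many}, they give a clopen injection $J:A\times[N]\to B\times[N]$ with bounded orbit displacement, and hence a clopen bipartite chart multigraph between $A$ and $B$. For any clopen matching $M$, the unmatched set $V=B\setminus\ran M$ meets every $\delta$-F\o lner set. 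Minimality supplies a finite $D'\subseteq\Gamma$ such that every $D'x$ contains such a set, so $V$ is $D'$-syndetic, and the nearest-point assignment gives $X\preceqcl LV$ with $L$ independent of $M$. Fixing $h$ with $2^h\ge NL$ and $q$ with $(q+1)N\ge3^h$, I apply Lemma~\ref{lem:clopen-augmentation} from the empty matching to remove augmenting paths of length at most $2q-1$. The alternating map $T=(M^{-1}\times\id)\circ J$ then gives $(q+1)NU\preceqcl NA$ for $U=A\setminus\dom M$, exactly as in the sublemma of Theorem~\ref{thm:borel-comparison}. This yields the chain
\[
3^hU\preceqcl(q+1)NU\preceqcl NA\preceqcl NX\preceqcl NLV\preceqcl2^hV .
\]
Applying Proposition~\ref{prop:clopen-three-two} $h$ times in finite amplifications gives $U\preceqcl V$, and combining this with $M$ gives $A\preceqcl B$.

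\textbf{Main obstacle.} I expect the first step to be the hard one: obtaining a surplus that is uniform over all sufficiently F\o lner sets from the inequality $\mu(A)<\mu(B)$ on $\M$. The difficulty is that F\o lnerness is measured only in one bounded-degree graph, while invariance involves all of $\Gamma$. I would first try enlarging $K$ to contain all maps needed to express $A$, $B$ and the relevant finitely many orbit maps. If that is insufficient, I would replace arbitrary F\o lner sets by the tiles of the packings themselves and use \eqref{eq:tower-average}-type averaging together with a Hahn--Banach/compactness argument on the finitely many tower types.
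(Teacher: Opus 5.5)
Your route has two genuine gaps. The step you flagged as the main obstacle is not one of them: the exhaustion and weak$^*$-limit argument you sketch does give the uniform surplus. You may also measure F\o lnerness in a larger graph containing the displacement of $J$, since that only strengthens the F\o lner condition.

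\textbf{First gap: your replacement for Packing input~I.} To define $J$ on \emph{all} of $A\times[N]$, you need clopen packings with $\rho$-F\o lner tiles such that \emph{every} point lies in more than $(1-\theta)m$ of them. The paper gives no such packings before comparison.
\begin{itemize}
\item Proposition~\ref{prop:random-tilings} and Corollary~\ref{cor:separated-packings} give pointwise control, but their tiles are not F\o lner.
\item Discarding the non-F\o lner tiles destroys the pointwise control.
\item The local replacement (Lemma~\ref{lem:measure-replacement}, Proposition~\ref{prop:large-packing}) only yields $\mu([\mathcal T])>1-a$ for $\mu\in\M$. The paper explicitly notes this is not a pointwise statement.
\end{itemize}
Points of $A$ outside the packings then get no $J$-images, and the matching and alternating-path machinery never starts. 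In the paper, pointwise F\o lner coverage of this kind is a \emph{consequence} of comparison, via Theorem~\ref{thm:clopen-main}. A clopen analogue of Elek--Tim\'ar's randomized almost-finiteness theorem would be a substantial new ingredient, and your proposal does not supply it.

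\textbf{Second gap: the case $\M=\varnothing$.} The statement does not assume $\M\neq\varnothing$, and this case, which is exactly the pure-infiniteness case, is not covered. There your compactness argument shows that for suitable $K,\rho$ no $\rho$-F\o lner sets exist at all. The surplus therefore holds only vacuously, and there are no F\o lner tiles from which to build $J$. Nothing makes $V$ syndetic either, so $X\preceqcl LV$ fails.

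\textbf{How the paper avoids both problems.} It uses Proposition~\ref{prop:clopen-three-two} only inside the clopen type semigroup.
\begin{itemize}
\item Minimality makes every nonzero type an order unit, so for $a\neq0$ one chooses $h$ with $u\le 2^h a$.
\item If $3^h b\le u$, then $3^h b\le 2^h a$, and $h$ applications of three-to-two in finite amplifications give $b\le a$. This is weak comparability.
\item Melleray's Proposition~2.20(1) turns weak comparability into almost unperforation, and his Proposition~2.18 turns almost unperforation into dynamical comparison.
\end{itemize}
This yields $A\preceqcl B$ uniformly, with or without invariant measures. It needs no F\o lner packings or matching arguments beyond the three-to-two proposition itself.
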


\begin{proof}
Since $\Gamma\curvearrowright X$ is minimal, every nonzero $a\in\mathsf S$ is an order unit: every $b\in\mathsf S$ satisfies $b\leq na$ for some $n$. To see this, a nonzero representative has a nonempty clopen slice $U\subseteq X$. Finitely many $\Gamma$-translates of $U$ cover $X$, so $u\leq n[U]\leq na$. Every clopen set in a finite amplification has type at most $ku$ for some $k$.

An ordered monoid with order unit $u$ has \emph{weak comparability} if
\[
 \forall a\ne0\ \exists k\geq1\ \forall b:
 \qquad kb\leq u\ \Longrightarrow\ b\leq a.
\]
Fix $a\ne0$ and choose $h\geq1$ with $u\leq2^ha$. If $3^hb\leq u$, then
\[
 3^hb\leq u\leq2^ha.
\]
Apply Proposition~\ref{prop:clopen-three-two} in finite amplifications, successively to $3^{h-1}b,2^{h-1}a$ and to the smaller powers. After $h$ applications this gives $b\leq a$. Thus $\mathsf S$ has weak comparability.

Melleray \cite[Proposition~2.20(1)]{Melleray} shows that, for a minimal action on a compact zero-dimensional space, weak comparability is equivalent to almost unperforation of the clopen type semigroup. The latter means that $(n+1)a\leq nb$ implies $a\leq b$. By \cite[Proposition~2.18]{Melleray}, it is equivalent to dynamical comparison, which gives the asserted clopen injection.
\end{proof}

\subsection{A packing large for every invariant measure}

The next lemma turns finitely many candidate packings into one packing which is large simultaneously for all invariant measures. The replacements are made inside finite tiles, and every replacement is nondecreasing for every invariant measure. This is the part of the local replacement method of \cite[Proposition~3.1]{ElekTimarUBA} that we need here.

\begin{lemma}[Measure-monotone replacement]\label{lem:measure-replacement}
Let $\Gamma\curvearrowright X$ be a stable action as in the Preliminaries. Let $\mathcal P_1,\ldots,\mathcal P_m$ be clopen packings, and let $H$ be a bounded-degree clopen graph in which every tile of every $\mathcal P_i$ is a clique. Given full clopen tilings $\mathcal E_1,\ldots,\mathcal E_q$, one can construct a clopen packing $\mathcal T$, using only tiles of the $\mathcal P_i$, such that
\begin{equation}\label{eq:replacement-conclusion}
 \mu([\mathcal T])\geq
 \frac1m\sum_{i=1}^m\mu([\mathcal P_i])
 -\frac1q\sum_{j=1}^q\mu(D_{H^{[2]}}(\mathcal E_j))
 \qquad(\mu\in\M).
\end{equation}
\end{lemma}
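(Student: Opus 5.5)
We build $\mathcal T$ by a finite sequence of local replacements, one for each tiling $\mathcal E_j$, following \cite[Proposition~3.1]{ElekTimarUBA}. Start from $\mathcal T_0=\mathcal P_1$, or more symmetrically from the empty packing. At stage $j$, look at each tile $E\in\mathcal E_j$ and consider the tiles of the current packing $\mathcal T_{j-1}$ and of the $\mathcal P_i$ that are contained in $E$. Inside $E$, replace the current configuration by the best one in the following sense. Discard all tiles of $\mathcal T_{j-1}$ that lie in the interior part of $E$ (points whose $H^{[2]}$-neighbourhood is in $E$). Then insert all tiles of that $\mathcal P_i$ which maximises the number of covered points among those $i$. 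Repeat this with the rule chosen so that the number of covered points in $E$ never decreases. Because $\mathcal E_j$, $\mathcal P_i$, $\mathcal T_{j-1}$ are all given by towers, all tests (``tile contained in $E$'', cardinality counts, the argmax over $i$ with a fixed tie-break) are finitely many clopen conditions. So each $\mathcal T_j$ is a clopen packing consisting of tiles of the $\mathcal P_i$.

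\textbf{Compatibility at the boundary.} The clique hypothesis is what keeps the packing legal. A tile $P$ of some $\mathcal P_i$ is a clique in $H$, so if $P$ meets $E$ but is not contained in $E$, then every point of $P\cap E$ lies in $D_H(\mathcal E_j)$. A tile of $\mathcal T_{j-1}$ meeting a newly inserted tile inside $E$ is therefore within $H$-distance $2$ of a point of $E$ having an $H^{[2]}$-neighbour outside $E$. The precise rule is as follows. We keep the old tiles not contained in $E$. We insert only those new tiles in $E$ that avoid them. The new tiles lost this way consist of points of $D_{H^{[2]}}(\mathcal E_j)\cap E$, since they meet a tile crossing $\partial E$ and are cliques. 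This gives disjointness, and hence a packing.

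\textbf{Measure estimate.} We use \eqref{eq:tower-average} for $\mathcal E_j$. Inside each tile $E$, the new covered count is at least
\[
\max_i|[\mathcal P_i]\cap E|-|D_{H^{[2]}}(\mathcal E_j)\cap E|\ \ge\ \tfrac1m\sum_i|[\mathcal P_i]\cap E|-|D_{H^{[2]}}(\mathcal E_j)\cap E|,
\]
and it is also at least the old count minus the same boundary term, if we keep whichever is larger. Integrating via \eqref{eq:tower-average} gives, for every $\mu\in\M$,
\[
\mu([\mathcal T_j])\ \ge\ \max\Bigl\{\mu([\mathcal T_{j-1}]),\ \tfrac1m\sum_i\mu([\mathcal P_i])\Bigr\}-\mu(D_{H^{[2]}}(\mathcal E_j)),
\]
but only up to the boundary loss. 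This is too lossy: it yields a single $\mu(D(\mathcal E_j))$ rather than the average over $j$. To obtain the average, we instead choose the output as the best $\mathcal T^{(j)}$ pointwise. The cleaner route is the following. For each $j$ separately, form the packing $\mathcal T^{(j)}$ obtained from the empty packing by a single replacement using $\mathcal E_j$. Its covered set satisfies the tile-by-tile bound above. Then combine these packings, again tile by tile, by a further replacement using a second tiling of the same family.

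\textbf{Main obstacle.} The main obstacle is converting a per-tile bound into the averaged estimate \eqref{eq:replacement-conclusion} uniformly in $\mu$. Since a maximum over $j$ of clopen packings cannot be taken pointwise, I would try to argue inductively. Starting with $\mathcal T_0$ equal to a packing realising the average, apply stages $j=1,\dots,q$ in such a way that the loss at stage $j$ is charged only to $D_{H^{[2]}}(\mathcal E_j)\cap(\text{changed region})$. Then arrange, by using stage $j$ only on a $1/q$-fraction of sheets or levels, that the total loss is the average $\frac1q\sum_j\mu(D_{H^{[2]}}(\mathcal E_j))$ rather than the sum. Making this fractional charging precise is where I expect the real work to lie.
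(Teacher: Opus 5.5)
Your proposal has a genuine gap: it never actually establishes \eqref{eq:replacement-conclusion}. After the first paragraph you decide the one-stage estimate is ``too lossy'' and abandon it. You end with a vague fractional-charging scheme that you explicitly leave unmade: running stage $j$ on a $1/q$-fraction of sheets, starting from a packing that ``realises the average'', which is not available. The loss you want to charge away comes from your own bookkeeping, namely the claim that the new count in an $\mathcal E_j$-tile $E$ is at least \emph{the old count minus the boundary term}. In fact nothing is lost relative to the previous stage. Keep every tile of $\mathcal T_{j-1}$ not contained in a single $\mathcal E_j$-tile; these are the crossing tiles, with union $B_j$. Inside each $E$, choose among all disjoint families of candidate tiles (tiles of any $\mathcal P_i$) contained in $E\setminus B_j$ one covering the maximal number of points. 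The old tiles contained in $E$ are themselves such a family. Hence $|[\mathcal T_j]\cap E|\geq|[\mathcal T_{j-1}]\cap E|$ exactly, and by \eqref{eq:tower-average} $\mu([\mathcal T_j])\geq\mu([\mathcal T_{j-1}])$ for every $\mu\in\M$. Your compatibility paragraph already supplies the other half. For each fixed $i$, the clique argument shows that the $\mathcal P_i$-tiles meeting $E\setminus D_{H^{[2]}}(\mathcal E_j)$ lie in $E\setminus B_j$. Therefore $\mu([\mathcal T_j])\geq\mu([\mathcal P_i])-\mu(D_{H^{[2]}}(\mathcal E_j))$ for every $i$.

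What you missed is that these two facts give the average with no further construction. By lossless monotonicity, $\mathcal T=\mathcal T_q$ satisfies $\mu([\mathcal T])\geq\mu([\mathcal T_j])\geq\mu([\mathcal P_i])-\mu(D_{H^{[2]}}(\mathcal E_j))$ for every pair $(i,j)$ and every $\mu\in\M$. Averaging these bounds over all pairs yields \eqref{eq:replacement-conclusion}. Your worry that ``a maximum over $j$ of clopen packings cannot be taken pointwise'' does not arise, because no pointwise maximum is needed. The monotone sequence dominates every earlier stage for all invariant measures simultaneously, since each inequality is a tile-by-tile count integrated through \eqref{eq:tower-average}. Your per-tile variant would also work once the old configuration is seen to be a lossless option: in each $E$, take the better of the old configuration and the best single $\mathcal P_i$ restricted to $E\setminus B_j$.
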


\begin{proof}
We call the tiles occurring in $\mathcal P_1,\ldots,\mathcal P_m$ the candidate tiles. Starting with the empty packing $\mathcal T_0$, we recursively define packings $\mathcal T_1,\ldots,\mathcal T_q$. At the $j$-th step, the tiling $\mathcal E_j$ is used to modify $\mathcal T_{j-1}$ and produce $\mathcal T_j$, as follows.

\emph{The replacement.}
Keep all tiles of $\mathcal T_{j-1}$ that are not contained in a single $\mathcal E_j$-tile. Call them crossing tiles, and let $B_j$ be their union. For each $\mathcal E_j$-tile $Q$, consider all candidate tiles contained in $Q\setminus B_j$. Choose a pairwise disjoint family of these candidates for which the number of covered points is maximal. The new packing $\mathcal T_j$ consists of these chosen tiles, over all $Q$, together with the unchanged crossing tiles.

This is a finite clopen operation. Crossing is decided by finitely many incidence tests, so $B_j$ is clopen. In a tile $Q$, each $\mathcal P_i$ contributes at most $|Q|$ candidates contained in $Q$; thus there are uniformly finitely many candidates and subfamilies to test. Refine the tower bases of $\mathcal E_j$ so that containment, disjointness and cardinality tests are constant. On each refined base choose the first maximizing family in a fixed order. The choice is made once on the base and transported to its levels, not independently at different points of the same tile. Choices in different $Q$'s are disjoint and avoid $B_j$. Hence $\mathcal T_j$ is a clopen packing.

\emph{Monotonicity.}
Every old tile contained in $Q$ was disjoint from the crossing tiles and is still an available candidate in $Q\setminus B_j$. Thus the old contained tiles give one admissible choice. The crossing tiles themselves do not change. Consequently, for every tile $Q\in\mathcal E_j$,
\[
 |[\mathcal T_j]\cap Q|\geq |[\mathcal T_{j-1}]\cap Q|.
\]
By \eqref{eq:tower-average}, for every $\mu\in\M$,
\begin{equation}\label{eq:replacement-monotonicity}
 \mu([\mathcal T_j])\geq\mu([\mathcal T_{j-1}]).
\end{equation}

\emph{Comparison with the original packings.}
Put
\[
 W_j=X\setminus D_{H^{[2]}}(\mathcal E_j).
\]
Thus $x\in W_j$ means that $B_H(x,2)\subseteq\mathcal E_j(x)$. Fix $i$, a tile $Q\in\mathcal E_j$, and $x\in[\mathcal P_i]\cap W_j\cap Q$. The $\mathcal P_i$-tile $T$ containing $x$ is a clique in $H$, so
\[
 T\subseteq B_H(x,1)\subseteq Q.
\]
It cannot meet a crossing tile $T'$ of the old packing. Indeed, $T'$ is also a candidate and hence an $H$-clique. If $z\in T\cap T'$, every point of $T'$ is at $H$-distance at most two from $x$, whence
\[
 T'\subseteq B_H(x,2)\subseteq Q,
\]
contrary to its being a crossing tile.

It follows that all $\mathcal P_i$-tiles meeting $W_j\cap Q$ are available candidates in $Q\setminus B_j$. For this fixed $i$ they are pairwise disjoint. Maximality of the covered cardinality therefore gives
\[
 |[\mathcal T_j]\cap Q|
 \geq |[\mathcal P_i]\cap W_j\cap Q|.
\]
Applying \eqref{eq:tower-average} once more yields, for every $i=1,\ldots,m$ and every $\mu\in\M$,
\begin{equation}\label{eq:replacement-comparison}
 \mu([\mathcal T_j])
 \geq\mu([\mathcal P_i])-\mu(D_{H^{[2]}}(\mathcal E_j)).
\end{equation}

Take $\mathcal T=\mathcal T_q$. By \eqref{eq:replacement-monotonicity}, its measure dominates that of every intermediate packing, for every invariant measure. Thus the right-hand side of \eqref{eq:replacement-comparison} is a lower bound for $\mu([\mathcal T])$ for every pair $i,j$. Averaging these lower bounds proves \eqref{eq:replacement-conclusion}. All tiles of $\mathcal T$ remain among the original candidates, so their common finite orbit bound is unchanged.
\end{proof}

\begin{proposition}[A large F\o lner packing]\label{prop:large-packing}
Let $\Gamma\curvearrowright X$ be a stable action as in the Preliminaries, and suppose that its orbit relation is topologically amenable. For every bounded-degree clopen graph $K$ and every $0<t,a<1$, there is a clopen packing $\mathcal T$ such that every tile is $t$-F\o lner for $K$ (as in the Preliminaries), and
\begin{equation}\label{eq:large-packing}
 \mu([\mathcal T])>1-a\qquad(\mu\in\M).
\end{equation}
The packing has a finite orbit bound in the sense of \eqref{eq:orbit-bounded-tiles} of the Preliminaries.
\end{proposition}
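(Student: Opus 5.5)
We combine the separated packings of Corollary~\ref{cor:separated-packings}, made F\o lner, with the measure-monotone replacement of Lemma~\ref{lem:measure-replacement}. This mirrors the Borel Packing input I and II of \cite{ElekTimarUBA}.

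First we produce finitely many F\o lner clopen packings covering almost every point. Apply Proposition~\ref{prop:random-tilings} to a suitable power $K'$ of the graph generated by $K$ and the amenability data. This gives full clopen tilings $\mathcal E'_1,\ldots,\mathcal E'_p$ with $\frac1p\sum_j\1_{D_{K}(\mathcal E'_j)}<\beta$ pointwise.

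Integrating with \eqref{eq:boundary-average} against the counting average over $j$ bounds the total relative boundary. We cannot use this directly as a measure statement, so we argue pointwise via tiles. In each tiling, discard the tiles $T$ with $|\partial_KT|\ge t|T|$; the remaining tiles form a clopen packing $\mathcal P_j$, since the boundary count is a clopen tower test.

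We then need $\frac1p\sum_j\1_{[\mathcal P_j]}\ge 1-\theta$ for every $\mu$ in the averaged sense. By \eqref{eq:tower-average} and Markov's inequality inside each tile,
\[
 \mu(X\setminus[\mathcal P_j])\le t^{-1}\mu(D_K(\mathcal E'_j)),
\]
so averaging gives
\[
 \frac1p\sum_j\mu([\mathcal P_j])\ge 1-\beta/t \qquad(\mu\in\M).
\]
The tiles of these packings have a common orbit bound $D$. Let $H$ be the bounded-degree clopen graph joining distinct points $x,dx$ for $d\in D^{-1}D$. Then every candidate tile is an $H$-clique.

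Next apply Proposition~\ref{prop:random-tilings} once more, now to $H^{[2]}$. The graph $H^{[2]}$ is topologically amenable because the full orbit relation is, by \eqref{eq:amenability-locality}. This gives full clopen tilings $\mathcal E_1,\ldots,\mathcal E_q$ with $\frac1q\sum_j\1_{D_{H^{[2]}}(\mathcal E_j)}<\beta'$ pointwise, hence
\[
 \frac1q\sum_j\mu(D_{H^{[2]}}(\mathcal E_j))<\beta' \qquad\text{for every }\mu.
\]
Lemma~\ref{lem:measure-replacement} then yields a single clopen packing $\mathcal T$ made of candidate tiles, so every tile is $t$-F\o lner, with
\[
 \mu([\mathcal T])\ge 1-\beta/t-\beta' \qquad(\mu\in\M).
\]
Choosing $\beta/t+\beta'<a$ gives \eqref{eq:large-packing}, and the orbit bound is inherited from the candidates.

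The main obstacle is the first step: Proposition~\ref{prop:random-tilings} produces tilings with small boundary relative to $H$-balls, not $K$-F\o lner tiles. We need the tiles themselves to be $K$-F\o lner on average. For this we take the amenability data for $K$ itself with $\alpha<\beta/d$, where $d$ bounds the degree of $K$, using the full-orbit form \eqref{eq:orbit-amenability} rather than $\langle S\rangle$. The point is that $D_K(\mathcal E')$ is computed with respect to $K$-edges, and \eqref{eq:token-disagreement} only needs small variation across $K$-edges. The tiles need not be $K$-connected, but the F\o lner notion in the Preliminaries allows this. If the measure average ever fails to be uniform in $\mu$, the fallback is to use the pointwise bound on $\frac1p\sum_j\1_{D_K(\mathcal E'_j)}$ together with \eqref{eq:tower-average}. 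That bound is uniform over $\M$, so the Markov step above is valid for all invariant $\mu$ simultaneously.
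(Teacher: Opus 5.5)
Your argument is correct and is essentially the paper's proof. You apply Proposition~\ref{prop:random-tilings} to $K$ itself, keep only the $t$-F\o lner tiles, and bound the discarded mass uniformly over $\M$ via \eqref{eq:boundary-average} and Markov's inequality; you then take a graph $H$ making all candidate tiles cliques, produce random tilings for $H^{[2]}$, and invoke Lemma~\ref{lem:measure-replacement}. The opening reference to Corollary~\ref{cor:separated-packings}, the ``suitable power $K'$'', and the ``main obstacle'' paragraph are unnecessary detours, since applying Proposition~\ref{prop:random-tilings} directly to $K$ already gives the pointwise bound on $D_K$ that you use.
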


\begin{proof}
The graph $K$ is topologically amenable by the Preliminaries. Apply Proposition~\ref{prop:random-tilings} with $H=K$ and $\beta=ta/2$. We obtain full clopen tilings $\mathcal R_1,\ldots,\mathcal R_m$ such that, for every $x\in X$,
\[
 \frac1m\sum_{i=1}^m\1_{D_K(\mathcal R_i)}(x)<\frac{ta}{2}.
\]
From $\mathcal R_i$ retain precisely the $t$-F\o lner tiles; denote the resulting packing by $\mathcal P_i$ and its union by $Y_i$. These are clopen because the tile cardinalities and their $K$-boundary cardinalities are clopen finite data.

On a discarded tile the ratio
\[
 r_i(x)=\frac{|\partial_K\mathcal R_i(x)|}{|\mathcal R_i(x)|}
\]
is at least $t$. By \eqref{eq:boundary-average},
\[
 \mu(X\setminus Y_i)\leq\frac1t\int_Xr_i\,d\mu
 =\frac1t\mu(D_K(\mathcal R_i)).
\]
Averaging and using the pointwise estimate from Proposition~\ref{prop:random-tilings}, we obtain that, for every $\mu\in\M$,
\begin{equation}\label{eq:average-good-packing}
 \frac1m\sum_{i=1}^m\mu(Y_i)>1-\frac a2.
\end{equation}
This is a measure estimate; it does not assert large pointwise coverage by the $\mathcal P_i$.

Choose a bounded-degree clopen graph $H$ making every tile of every $\mathcal P_i$ a clique. Such a graph exists by their finite tower descriptions. The graph $H^{[2]}$ is topologically amenable by the Preliminaries. Proposition~\ref{prop:random-tilings} therefore gives full clopen tilings $\mathcal E_1,\ldots,\mathcal E_q$ such that, for every $x\in X$,
\[
 \frac1q\sum_{j=1}^q
 \1_{D_{H^{[2]}}(\mathcal E_j)}(x)<\frac a2.
\]
Integrating, we obtain, for every $\mu\in\M$,
\[
 \frac1q\sum_{j=1}^q
 \mu(D_{H^{[2]}}(\mathcal E_j))<\frac a2.
\]
Lemma~\ref{lem:measure-replacement} and \eqref{eq:average-good-packing} now give a single packing $\mathcal T$ with
\[
 \mu([\mathcal T])
 \geq \frac1m\sum_i\mu(Y_i)
       -\frac1q\sum_j\mu(D_{H^{[2]}}(\mathcal E_j))
 >1-a
\]
for every $\mu\in\M$. Every tile of $\mathcal T$ is a tile of some $\mathcal P_i$, so it is still $t$-F\o lner for $K$. Neither the choices nor the resulting packing depend on $\mu$.
\end{proof}

\subsection{Absorption and strong almost finiteness}

First, we prove almost finiteness.

\begin{proposition}[Almost finiteness]\label{prop:clopen-almost-finite}
Suppose that $\Gamma\curvearrowright X$ is minimal, that its orbit relation is topologically amenable, and that $\M\ne\varnothing$. Then the orbit relation is almost finite.
\end{proposition}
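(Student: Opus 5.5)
We mimic the absorption step of the Borel proof of Theorem~\ref{thm:borel-main}, with the uniform F\o lner comparison replaced by the measure-theoretic comparison of Proposition~\ref{prop:comparison}. Fix a bounded-degree clopen graph $K$ and $\varepsilon\in(0,1)$, and put $t=\varepsilon/10$. If some $K$-component is finite of size at most $1/t$, minimality together with the fact that finite $K$-components of bounded size form a clopen invariant structure needs separate treatment. Under minimality, the full orbits are infinite (the unit space is Cantor). However, $K$-components may still be small. For tiles we therefore only need $|T|>1/t$ or $\partial_KT=\varnothing$. A tile with empty boundary can be kept unchanged in the final tiling, since no reserve is needed there. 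So we simply choose reserve sets only in tiles of size $>1/t$, and handle the others as below.

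\emph{Packing and reserve.} Apply Proposition~\ref{prop:large-packing} with this $K$, with $t$ and with a small $a$, say $a=t$. This gives a clopen $t$-F\o lner packing $\mathcal T$ with $\mu([\mathcal T])>1-t$ for all $\mu\in\M$. In each tile $T$ with $|T|>1/t$ we select a reserve $B_T$ of $\lceil 3t|T|\rceil$ points. Tiles are large enough to guarantee this, as in \eqref{eq:borel-reserve-rounding}. The selection is done clopenly by choosing fixed levels on each refined tower base.

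\emph{Small tiles and the measure estimate.} Tiles with $|T|\le 1/t$ are $t$-F\o lner only if $\partial_KT=\varnothing$, so they are whole $K$-components and can be kept as they are. Let $A=X\setminus[\mathcal T]$ and $B=\bigcup_TB_T$, both clopen. Since $\mu(A)<t$ and \eqref{eq:tower-average} gives $\mu(B)\ge 3t\,\mu([\mathcal T]\cap\{\text{large tiles}\})$, we need the large tiles to carry enough mass. To ensure this, we first discard $K$-components of bounded size using a clopen invariant set. We then work either on the complementary clopen invariant set, or apply Proposition~\ref{prop:comparison} relative to it. The cleanest fix is to note that if $A\ne\varnothing$ then $\mu(B)\ge 3t(1-t)-3t\mu(\text{small})$. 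This is exactly the step I expect to be the main obstacle, and it may require choosing reserves inside small tiles via amplification. In the main case we get $\mu(A)<t<3t(1-t)\le\mu(B)$ for every $\mu\in\M$, with $B\ne\varnothing$ since $\M\neq\varnothing$ forces $\mathcal T\neq\varnothing$. Proposition~\ref{prop:comparison}, which uses minimality and amenability, then yields a clopen orbit injection $\psi:A\to B$.

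\emph{Enlarging the tiles.} Set $T'=T\cup\psi^{-1}(B_T)$. This is a clopen tiling of $X$: the towers are refined along the finitely many pieces of $\psi$, and the common orbit bound is the product of $D$ with the finite set of group elements used by $\psi$. The boundary estimate $\partial_KT'\subseteq\partial_KT\cup(T'\setminus T)$ gives $|\partial_KT'|<t|T|+4t|T|\le 5t|T'|<\varepsilon|T'|$ exactly as in \eqref{eq:borel-enlargement-boundary}. Thus every tile of the resulting clopen tiling is $\varepsilon$-F\o lner for $K$, which is almost finiteness in the sense of the Preliminaries.
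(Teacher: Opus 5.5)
Your overall scheme (the large F\o lner packing of Proposition~\ref{prop:large-packing}, reserves of size $\lceil 3t|T|\rceil$, an injection $\psi:A\to B$ from Proposition~\ref{prop:comparison}, then $T'=T\cup\psi^{-1}(B_T)$) is the paper's. However, the step you flag as the main obstacle is a genuine gap, and the proposal does not close it.

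Because you pack with respect to $K$ itself, a $t$-F\o lner tile may have $|T|\le 1/t$. As you note, such a tile has $\partial_KT=\varnothing$. In general it cannot receive a reserve without breaking $|T'\setminus T|<4t|T|$: for $|T|<1/(4t)$ even one absorbed point is too many.

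Nothing in Proposition~\ref{prop:large-packing} controls how much mass these small tiles carry. $K$ is an arbitrary bounded-degree clopen subgraph, and may have no edges at all on some clopen set. Minimality only concerns the full $\Gamma$-orbits. So you only get $\mu(B)\ge 3t\,\mu(\text{large tiles})$, which need not exceed $\mu(A)$, since you only know $\mu(A)<t$. Hence Proposition~\ref{prop:comparison} cannot be invoked. For the same reason, $\mathcal T\ne\varnothing$ does not give $B\ne\varnothing$.

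Your suggested repair of discarding small $K$-components also fails. The union of small $K$-components is clopen and $K$-invariant, but not $\Gamma$-invariant. So, unlike the Borel proof of Theorem~\ref{thm:borel-main} (where the components of $G$ are the orbits), you cannot pass to a complementary invariant set and still apply comparison there. In any case the obstruction is small \emph{tiles}, which can be unions of several whole $K$-components.

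The missing idea is to change the graph before packing. Every full orbit is infinite, by minimality on a Cantor space, so you can enlarge $K$ to a bounded-degree clopen graph $K_0\supseteq K$ with $|K_0x|>1/t$ for all $x$. For each $x$, pick more than $1/t$ group elements sending $x$ to distinct points other than $x$. By stability this persists on a clopen neighbourhood, and compactness leaves finitely many orbit maps.

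Now apply Proposition~\ref{prop:large-packing} to $K_0$ with $a=t$. Every tile satisfies $|\partial_{K_0}T|<t|T|<|T|$, so it has a $K_0$-interior point and hence $|T|>1/t$. Thus every tile receives a reserve, and
\[
\mu(B)\ge 3t\,\mu([\mathcal T])>3t(1-t)>t>\mu(A)\qquad(\mu\in\M).
\]
Since $K\subseteq K_0$ gives $\partial_KT'\subseteq\partial_{K_0}T\cup(T'\setminus T)$, your final boundary estimate then goes through verbatim.
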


\begin{proof}
Fix a bounded-degree clopen graph $K$ and $\tau>0$, and choose
\[
 0<t<\min\{1/10,\tau/5\}.
\]
Every full orbit is infinite by minimality. Hence $K$ can be enlarged to a bounded-degree clopen graph $K_0$ such that
\begin{equation}\label{eq:large-neighbourhood}
 |K_0x|>1/t\qquad(x\in X).
\end{equation}
For each point, choose more than $1/t$ distinct orbit points different from it. These inequalities persist on a clopen neighbourhood. A finite subcover of $X$ and the finitely many resulting orbit maps give $K_0$. This uses the full orbit structure, not minimality of $K$.

Apply Proposition~\ref{prop:large-packing} to $K_0$ with $a=t$. Let $\mathcal T$ be its packing and $Y=[\mathcal T]$. Every tile $T$ has a $K_0$-interior point, since $|\partial_{K_0}T|<t|T|$ and $t<1$. By \eqref{eq:large-neighbourhood},
\[
 |T|>1/t.
\]
In each tile select a reserve $B_T$ of size $\lceil3t|T|\rceil$. The tower levels give this selection clopen, and
\begin{equation}\label{eq:reserve-size}
 3t|T|\leq |B_T|<4t|T|<|T|.
\end{equation}
Put
\[
 A=X\setminus Y,\qquad B=\bigcup_{T\in\mathcal T}B_T.
\]
The sets $A,B$ are clopen. By invariance, tower counting and \eqref{eq:reserve-size},
\begin{equation}\label{eq:reserve-measure}
 \mu(A)<t,
 \qquad \mu(B)\geq3t\mu(Y)>3t(1-t)>t
 \qquad(\mu\in\M).
\end{equation}
Since $\M\ne\varnothing$, the set $B$ is nonempty. If $A=\varnothing$, the packing is already the required tiling. Otherwise Proposition~\ref{prop:comparison} gives a clopen orbit injection $\psi:A\to B$.

Replace each tile by
\begin{equation}\label{eq:absorbed-tile}
 T'=T\cup\psi^{-1}(B_T).
\end{equation}
These sets are disjoint and cover $X$. They have a finite clopen tower description. Indeed, refine each old tower base so that the reserve levels, membership in $\ran\psi$, and the inverse branch of $\psi$ at each relevant level are fixed. Add the corresponding inverse images as new levels. Their ranges lie in $A$, hence are disjoint from the old levels; they are mutually disjoint because $\psi$ is injective. All level maps are finite compositions of clopen orbit maps. This also supplies a common finite orbit bound for the new tiles.

Finally,
\[
 |T'\setminus T|\leq |B_T|<4t|T|,
 \qquad \partial_KT'\subseteq\partial_{K_0}T\cup(T'\setminus T).
\]
Consequently,
\begin{equation}\label{eq:absorbed-boundary}
 |\partial_KT'|<5t|T|\leq5t|T'|<\tau|T'|.
\end{equation}
Hence the orbit relation is almost finite.
\end{proof}

\medskip
\begin{proof}[Proof of Theorem~\ref{thm:clopen-main}]
Now we prove strong almost finiteness. Fix $K$ and $\tau,\eta>0$. By Proposition~\ref{prop:clopen-almost-finite}, choose a clopen tiling $\mathcal T$ all of whose tiles are $\tau$-F\o lner for $K$. Let $H$ contain $K$ and make each $\mathcal T$-tile a clique. Let $d$ bound the degree of $K$, and choose $\theta>0$ with $(d+1)\theta<\eta$.

The graph $H$ is topologically amenable by the Preliminaries. Corollary~\ref{cor:separated-packings}, with separation distance one in $H$, gives clopen packings $\mathcal P_1,\ldots,\mathcal P_m$ satisfying \eqref{eq:separated-coverage}. We use only their separation and pointwise coverage; their tiles need not be F\o lner.

For each $P\in\mathcal P_i$, set
\[
 \widehat P=\bigcup\{T\in\mathcal T:T\cap P\ne\varnothing\}.
\]
These sets are pairwise disjoint. Otherwise one $\mathcal T$-tile would meet two different $\mathcal P_i$-tiles, and two points of those intersections would be joined by an $H$-edge, contrary to separation. Add all $\mathcal T$-tiles meeting no tile of $\mathcal P_i$. The resulting family $\widehat{\mathcal P}_i$ is a full clopen tiling. Its tiles are uniformly bounded unions of old tiles, and all incidence tests are clopen. More explicitly, two points in an enlarged tile are joined by an old-tile map, a $\mathcal P_i$-tile map and another old-tile map. Finitely many such compositions describe the new finite equivalence relation and give its tower form.

Every new tile is a disjoint union of $\tau$-F\o lner $\mathcal T$-tiles. Since
\[
 \partial_K\widehat P\subseteq
 \bigcup_{T\subseteq\widehat P}\partial_KT,
\]
it too is $\tau$-F\o lner for $K$.

Fix $x\in X$. For each of the at most $d+1$ points in $\{x\}\cup Kx$, fewer than $\theta m$ packings fail to cover that point. Therefore, in all but $(d+1)\theta m<\eta m$ indices, $x$ and all its $K$-neighbours are covered by $\mathcal P_i$. They belong to one $\mathcal P_i$-tile: $K\subseteq H$, and an $H$-edge cannot join different tiles of this packing. Thus $x$ is already interior to that tile, and remains interior after enlargement. This is precisely \eqref{eq:strong-conclusion} for the tilings $\widehat{\mathcal P}_1,\ldots,\widehat{\mathcal P}_m$. Hence strong almost finiteness follows.
\end{proof}

\subsection{The equivalent F\o lner formulation}

The \emph{orbit F\o lner property} means that, for every bounded-degree clopen graph $K$ and $\varepsilon>0$, there is a nonempty finite set $F$ in one full orbit with $|\partial_KF|<\varepsilon|F|$. This is an existence condition, not a uniform bound in the metric of each $K$. It is the F\o lner property of the orbit structure used in the preliminaries.

\begin{corollary}\label{cor:clopen-equivalence}
For a minimal stable orbit structure on a Cantor space, the following are equivalent:
\begin{enumerate}[label=\textup{(\roman*)}]
\item it is topologically amenable and admits an invariant probability measure;
\item it is topologically amenable and has the orbit F\o lner property;
\item it is strongly almost finite.
\end{enumerate}
\end{corollary}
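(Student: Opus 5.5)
We prove the cycle (i)$\Rightarrow$(iii)$\Rightarrow$(ii)$\Rightarrow$(i). The first implication is exactly Theorem~\ref{thm:clopen-main}: minimality, topological amenability and $\M\ne\varnothing$ yield, for every $K$ and $\tau,\eta>0$, tilings satisfying \eqref{eq:strong-conclusion}, which is the definition of strong almost finiteness in the Preliminaries.

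For (iii)$\Rightarrow$(ii), the orbit F\o lner property follows from almost finiteness alone. Any tile of a tiling with $\tau$-F\o lner tiles is a nonempty finite set in one full orbit with $|\partial_KF|<\tau|F|$.

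For topological amenability, fix a bounded-degree clopen graph $K$ with degree bound $d$, and $\alpha>0$. Choose tilings $\mathcal E_1,\ldots,\mathcal E_m$ as in \eqref{eq:strong-af-definition} with $\eta<\alpha/2$. Define
\[
p_x=\frac1m\sum_{i=1}^m\frac{\1_{\mathcal E_i(x)}}{|\mathcal E_i(x)|}.
\]
Continuity holds because the tower descriptions make $x\mapsto p_x(gx)$ locally constant. Supports lie in $D_0x$ for a common finite orbit bound $D_0$. If $x\sim_K y$ and $x\notin D_K(\mathcal E_i)$, then $\mathcal E_i(x)=\mathcal E_i(y)$, so the two summands agree. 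Hence $\|p_x-p_y\|_1\le \frac2m|\{i:x\in D_K(\mathcal E_i)\}|<2\eta<\alpha$. Since $K$ was arbitrary, the locality statement \eqref{eq:amenability-locality} (in its orbit-relation form) gives topological amenability. The relevant families $F$ are finite, so one may take $K=H_S$ with $S\supseteq F$.

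The remaining implication (ii)$\Rightarrow$(i) is the main step: we must produce an invariant probability measure from F\o lner sets. Enumerate finite symmetric $S_n\uparrow\Gamma$. Take $F_n$ in one full orbit that is $(1/n)$-F\o lner for $H_{S_n}$, and set $\mu_n=|F_n|^{-1}\sum_{x\in F_n}\delta_x$.

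For $g\in S_n$ and clopen $A$, the sets $gF_n$ and $F_n$ differ only at points of $\partial_{H_{S_n}}F_n$, together with fixed points, which do not matter. This gives $|\mu_n(gA)-\mu_n(A)|\le 2/n$.

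A weak-$*$ limit point $\mu$ exists by compactness of $X$. Clopen indicators are continuous, so $\mu(gA)=\mu(A)$ for all $g\in\Gamma$ and clopen $A$. By the Preliminaries, this is exactly $\G$-invariance, so $\mu\in\M$.

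The only care needed is the counting bound $|\{x\in F_n:gx\notin F_n\}|\le|\partial F_n|$ when $gx=x$ is possible. This is immediate, since such $x$ satisfy $gx\in F_n$. Topological amenability is carried along unchanged from (ii) to (i).
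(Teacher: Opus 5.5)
Your proof is correct and follows the paper's argument essentially step for step. You get (i)$\Rightarrow$(iii) from Theorem~\ref{thm:clopen-main}, (iii)$\Rightarrow$(ii) from the averaged uniform tile distributions whose $\ell^1$-variation is at most twice the boundary frequency, and (ii)$\Rightarrow$(i) from a weak-$*$ limit of uniform measures on F\o lner sets for an exhausting sequence $H_{S_n}$.

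One minor point: the appeal to \eqref{eq:amenability-locality} is unnecessary, and slightly off, since that equivalence concerns the restricted $\langle S\rangle$-actions while your $p_x$ live on full orbits. As you note yourself, taking $K=H_F$ verifies Definition~\ref{def:orbit-amenability} for the full orbit relation directly.
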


\begin{proof}
Theorem~\ref{thm:clopen-main} proves (i)$\Rightarrow$(iii).

Assume (iii). A tile of a sufficiently good tiling gives the orbit F\o lner property. To prove amenability, fix $K$ and $\varepsilon>0$, and choose tilings $\mathcal E_1,\ldots,\mathcal E_m$ for which the boundary frequency is less than $\varepsilon/2$. Define
\[
 p_x=\frac1m\sum_{i=1}^m
 \frac1{|\mathcal E_i(x)|}\sum_{z\in\mathcal E_i(x)}\delta_z.
\]
The tower descriptions give locally constant rational weights with a common denominator and a common finite orbit bound on the support. For a $K$-edge $xy$, the two summands agree whenever $x,y$ belong to the same $\mathcal E_i$-tile; otherwise their $\ell^1$-distance is at most two. The latter event implies $x\in D_K(\mathcal E_i)$, so
\[
 \|p_x-p_y\|_1
 \leq\frac2m\sum_i\1_{D_K(\mathcal E_i)}(x)<\varepsilon.
\]
This is the orbitwise topological amenability condition from the Preliminaries and proves (iii)$\Rightarrow$(ii).

Finally, assume (ii). Exhaust $\Gamma$ by increasing finite symmetric sets $S_n$ and let $K_n$ join distinct $x,sx$ for $s\in S_n$. Choose a nonempty finite $F_n$ with
\[
 |\partial_{K_n}F_n|<\frac1n|F_n|,
 \qquad \mu_n=\frac1{|F_n|}\sum_{x\in F_n}\delta_x.
\]
For any fixed $g\in\Gamma$, eventually $g\in S_n$, and then
\[
 |gF_n\mathbin\triangle F_n|\leq2|\partial_{K_n}F_n|.
\]
Hence, for every $f\in C(X)$,
\[
 \left|\int f\circ g\,d\mu_n-\int f\,d\mu_n\right|
 \leq\frac{2\|f\|_\infty}{n}.
\]
Compactness of the space of probabilities on $X$ gives a weak-star convergent subsequence. Its limit is invariant under every $g\in\Gamma$, proving (ii)$\Rightarrow$(i).
\end{proof}

\section{The measure-free case: pure infiniteness}\label{sec:pure}

Corollary~\ref{cor:clopen-equivalence} shows, in particular, that failure of the F\o lner property forces the absence of an invariant probability measure.  We now prove the stronger statement needed for the dichotomy: in the minimal topologically amenable clopen setting, the absence of an invariant probability measure forces pure infiniteness.

Let $S$ be the clopen type semigroup introduced in Section~\ref{sec:prelim}, and put $u=[X]$.  Minimality implies that every nonzero element of $S$ is an order unit.  Indeed, if $A\subseteq X$ is nonempty and clopen, density of every orbit gives finitely many clopen orbit translates of $A$ whose ranges cover $X$; after disjointifying the ranges one obtains
\[
   u\le n[A]
\]
for some $n$. Every nonzero type has a representative with a nonempty clopen slice, so the same order-unit conclusion holds for types in finite amplifications.

Recall that an ordered monoid with order unit $u$ has \emph{weak comparability} if for every $0\neq a$ there is $k\ge1$ such that $kb\le u$ implies $b\le a$ for every $b$.  The three-to-two principle gives this immediately.  Fix $0\neq a$ and choose $h$ with $u\le2^h a$.  If $3^h b\le u$, then
\[
   3^h b\le u\le2^h a.
\]
Applying Proposition~\ref{prop:clopen-three-two} successively to $3^{h-1}b,2^{h-1}a$ and then to the smaller amplifications yields $b\le a$.  Thus $S$ has weak comparability.

Melleray proves that, for a minimal action on a compact zero-dimensional space, weak comparability of the clopen type semigroup implies almost unperforation \cite[Proposition~2.20(1)]{Melleray}.  The fixed stable action from Section~\ref{sec:prelim} has precisely this type semigroup, so the result applies to $S$.  Ara--B\"onicke--Bosa--Li prove that, in the present minimal second-countable setting, almost unperforation of the type semigroup is equivalent to dynamical comparison \cite[Theorem~A]{ABBLType}.  Here almost unperforation means $(n+1)a\leq nb\Rightarrow a\leq b$ for all $a,b$ and $n\geq1$. Hence, for nonempty clopen $A,B\subseteq X$,
\[
   \mu(A)<\mu(B)\quad\text{for every invariant probability measure }\mu
   \qquad\Longrightarrow\qquad
   A\preceqcl B.
\]

Assume now that there is no invariant probability measure.  The hypothesis in the preceding comparison statement is then vacuous, so
\[
   A\preceqcl B
   \qquad\text{for all nonempty clopen }A,B\subseteq X.
\]
If $A\subseteq X$ is nonempty and clopen, write $A=A_1\sqcup A_2$ with $A_1,A_2$ nonempty and clopen.  Then
\[
   A\preceqcl A_1,
   \qquad
   A\preceqcl A_2.
\]
The two targets are disjoint, and therefore $2[A]\le[A]$. Such a decomposition is incompatible with an invariant probability measure when $A=X$.  Every nonempty clopen set is thus $(2,1)$-paradoxical, which is pure infiniteness.

Together with Corollary~\ref{cor:clopen-equivalence} and Theorem~\ref{thm:clopen-main}, this proves the dichotomy stated in Theorem~\ref{thm:main-intro} through the dictionary of Section~\ref{sec:prelim}.
\vskip 0.3in
\noindent

\textbf{Declaration of AI use.}
\vskip 0.1in
\noindent
ChatGPT-6 was used to assist with proof development, proof checking, and manuscript preparation.
The authors reviewed and revised the AI-assisted material and verified the mathematical arguments.
The authors take full responsibility for the content of this work.
\vi
\textbf{Acknowledgement:} The second author was supported by Hungarian National Research, Development and Innovation Office, NKFIH Highlight grant 152849 and Icelandic Research Fund grant 239736-051.

\vi
\emph{G\'abor Elek},  Alfr\'ed R\'enyi Institute of Mathematics, Budapest, Hungary.

\noindent
\texttt{elek.gabor.math[at]gmail.com}
\vi
\emph{\'Ad\'am Tim\'ar}, Division of Mathematics, The Science Institute, University of Iceland, Reykjavik, Iceland
and
Alfr\'ed R\'enyi Institute of Mathematics, Budapest, Hungary.

\noindent
\texttt{madaramit[at]gmail.com}\\ 
\end{document}